\newcommand\mypagesizel{
\textwidth= 6.5in
\textheight=9in
\voffset-.55in
\hoffset -0.75in
\marginparwidth=56pt
}
\newcommand{\Pic}{\textup{Pic}}
\newcommand{\codim}{\textup{codim}}
\newcommand{\N}{\textup{N}}
\newcommand{\Mov}{\textup{Mov}}
\renewcommand{\phi}{\varphi}
\newcommand{\into}{\hookrightarrow}
\newcommand{\map}{\dashrightarrow}
\renewcommand{\le}{\leqslant}
\renewcommand{\ge}{\geqslant}
\newcommand{\rank}{\textup{rank}}
\newcommand{\bQ}{\mathbb{Q}}
\newcommand{\cQ}{\mathcal{Q}}
\newcommand{\sC}{\mathscr{C}}
\newcommand{\sE}{\mathscr{E}}
\newcommand{\sF}{\mathscr{F}}
\newcommand{\sG}{\mathscr{G}}
\newcommand{\sI}{\mathscr{I}}
\newcommand{\sK}{\mathscr{K}}
\newcommand{\sL}{\mathscr{L}}
\newcommand{\sM}{\mathscr{M}}
\newcommand{\sN}{\mathscr{N}}
\newcommand{\sO}{\mathscr{O}}
\newcommand{\sQ}{\mathscr{Q}}
\newcommand{\sT}{\mathscr{T}}
\newtheorem{thm}{Theorem}[section]
\newtheorem*{thm*}{Theorem}
\newtheorem{lemma}[thm]{Lemma}
\newtheorem{cor}[thm]{Corollary}
\theoremstyle{definition}
\newtheorem{defn}[thm]{Definition}
\newtheorem{say}[thm]{}
\newtheorem{ques}[thm]{Question}
\newtheorem{notation}[thm]{Notation}
\newtheorem{defn-thm}[thm]{Definition-Theorem}
\newtheorem{rem}[thm]{Remark}
\theoremstyle{remark}
\newtheorem*{not-and-def}{Notation and definitions}
\numberwithin{equation}{section}
\begin{document}

\title[]{Characterization of generic projective space bundles and algebraicity of foliations}

\author{Carolina \textsc{Araujo}} 

\address{\noindent Carolina Araujo: IMPA, Estrada Dona Castorina 110, Rio de
  Janeiro, 22460-320, Brazil} 

\email{caraujo@impa.br}

\author{St\'ephane \textsc{Druel}}
  
\address{St\'ephane Druel: Institut Fourier, UMR 5582 du CNRS, Universit\'e Grenoble Alpes, CS 40700, 38058 Grenoble cedex 9, France} 

\email{stephane.druel@univ-grenoble-alpes.fr}

\subjclass[2010]{14M22, 14J40, 37F75}

\begin{abstract}
In this paper we consider various notions of positivity for distributions on complex projective manifolds.
We start by analyzing distributions having big slope with respect to curve classes, 
obtaining characterizations of generic projective space bundles  in terms of  movable curve classes. 
We then apply this result to investigate algebraicity of leaves of foliations, providing 
a lower bound for the algebraic rank of a foliation in terms of invariants measuring positivity. 
We classify foliations attaining this bound, and 
describe those whose algebraic rank slightly exceeds this bound. 
\end{abstract}

\maketitle

\tableofcontents

%
%
%
%

\section{Introduction}

The existence of sufficiently positive subsheaves  of the tangent bundle of a complex projective manifold $X$
imposes strong restrictions on $X$. 
In particular, several special varieties can be characterized by positivity properties of their tangent bundle. 
Early results in this direction include Kobayashi and Ochiai's characterizations of projective spaces and hyperquadrics \cite{kobayashi_ochiai}
and Mori's characterization of projective spaces \cite{mori79}.
There are many ways of measuring positivity of a torsion free sheaf. 
One way is to consider slopes with respect to movable curve classes $\alpha\in\Mov(X)$.
We refer to Section~\ref{movable_classes} for the notion of slope and its properties.
When $\alpha$ is an ample class, we have the following characterization of projective spaces due to H\"oring. 

\begin{thm}\label{thm:hoering}
Let $X$ be an $n$-dimensional complex normal projective variety,  $\sL$  an ample line bundle on $X$,
and  $\sF \subseteq T_X$  a distribution. Set $\alpha:=[\sL^{n-1}]\in\Mov(X)$. 
If $\mu_\alpha(\sF\otimes\sL^*)>0$, then 
\begin{itemize}
	\item $(X,\sL)\cong (\mathbb{P}^n,\sO_{\mathbb{P}^n}(1))$ $(${\cite[Theorem 1.1]{hoering_fol}}$)$, and 
	\item $\sF=T_{\mathbb{P}^n}$ $(${\cite[Theorem 1.3]{adk08}}$)$.
\end{itemize}
\end{thm}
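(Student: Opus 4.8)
The plan is to prove the substantial assertion $(X,\sL)\cong(\mathbb{P}^n,\sO_{\mathbb{P}^n}(1))$ first and then to deduce $\sF=T_{\mathbb{P}^n}$. Since $\det(\sF\otimes\sL^\ast)=\det\sF-(\rank\sF)\,\sL$ and $\sL\cdot\alpha=\sL^n$, the hypothesis is equivalent to $\mu_\alpha(\sF)>\sL^n$, \ie $\det\sF\cdot\sL^{n-1}>(\rank\sF)\,\sL^n$. Replacing $\sF$ by its $\alpha$-maximal destabilizing subsheaf — saturated in $T_X$, hence still a distribution, still satisfying the inequality, and $\alpha$-semistable, so that $\mu_{\alpha,\min}(\sF)=\mu_\alpha(\sF)>\sL^n>0$ — costs nothing: once this smaller $\sF$ is identified with $T_{\mathbb{P}^n}$, the original one, caught between it and $T_X$, equals $T_{\mathbb{P}^n}$ as well. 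Dualizing $\sF\hookrightarrow T_X$ yields a quotient of $\Omega^1_X$ of negative $\alpha$-degree, so $\Omega^1_X$ is not nef on a general complete intersection curve and $X$ is uniruled by Miyaoka's generic semipositivity theorem.

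The heart of the matter is the case $r:=\rank\sF=n$, \ie $\sF=T_X$, in which the inequality reads $(-K_X-n\sL)\cdot\sL^{n-1}>0$. Suppose every $K_X$-negative extremal rational curve $\ell$ on $X$ satisfied $-K_X\cdot\ell\le n\,(\sL\cdot\ell)$. Then $-K_X-n\sL$ is nonpositive on every such $[\ell]$ and, $\sL$ being ample, also on $\NE(X)_{K_X\ge 0}$; by the cone theorem it is then nonpositive on all of $\NE(X)$, contradicting $(-K_X-n\sL)\cdot\sL^{n-1}>0$. So some extremal rational curve $\ell$ has $-K_X\cdot\ell>n\,(\sL\cdot\ell)$; since $\sL\cdot\ell\ge 1$ while $-K_X\cdot\ell\le n+1$ by extremality, this forces $\sL\cdot\ell=1$ and $-K_X\cdot\ell=n+1=\dim X+1$. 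The contraction of this ray is then a morphism to a point by the Ionescu--Wi\'sniewski fibre-dimension estimate, so $\rho(X)=1$, $-K_X$ is ample, and by the Cho--Miyaoka--Shepherd-Barron characterization of projective space (in the normal setting) $X\cong\mathbb{P}^n$ and $\sL=\sO_{\mathbb{P}^n}(1)$.

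For general $r\le n$ one reduces to this case. The positivity of $\sF$ feeds into the algebraicity criterion for subsheaves of $T_X$ of positive slope along a movable class (Bogomolov--McQuillan, Kebekus--Sol\'a Conde--Toma, Campana--P\u{a}un): $\sF$ is algebraically integrable with rationally connected leaves, so $\sF=T_{X/Y}$ for a fibration $\varphi\colon X\map Y$, with $\det\sF=-K_{X/Y}$ and a general leaf $F$ satisfying $\dim F=r$ and $\det\sF|_F=-K_F$. Transferring the hypothesis to $F$ — via $\det\sF=-K_{X/Y}$ and a suitable choice of cycles representing $\alpha$ — gives $(-K_F-r\,\sL|_F)\cdot(\sL|_F)^{r-1}>0$, so the case $\sF=T_X$ applied to $F$ in place of $X$ yields $F\cong\mathbb{P}^r$ and $\sL|_F=\sO_{\mathbb{P}^r}(1)$. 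As $-K_{X/Y}$ and $(r+1)\sL$ agree on the general fibre, we may write $-K_{X/Y}=(r+1)\sL+\varphi^\ast M$ with $M\in\Pic(Y)$, and $-K_{X/Y}\cdot\sL^{n-1}>r\,\sL^n$ becomes $\varphi^\ast M\cdot\sL^{n-1}>-\sL^n$; ampleness of $\sL$ forces $\dim Y=0$, hence $r=n$, $X\cong\mathbb{P}^n$, $\sL=\sO_{\mathbb{P}^n}(1)$. Finally, on $\mathbb{P}^n$ the inequality forces $\det\sF=\sO_{\mathbb{P}^n}(r+1)$; restricting to a general line, $\sF$ would otherwise miss the tangent direction of some line through a general point, so $r=n$ and $\sF=T_{\mathbb{P}^n}$ — this is \cite[Theorem 1.3]{adk08}.

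The steps I expect to be delicate are the algebraic integrability of the leaves and, above all, the exclusion of intermediate ranks $0<r<n$: the ampleness estimate for $M$ genuinely uses the \emph{strictness} of the inequality and the fact that $\alpha=[\sL^{n-1}]$ is a complete intersection class rather than an arbitrary movable class. Both are essential — with equality allowed, or with a general movable class, the $\mathbb{P}^r$-bundles $\mathbb{P}(\sE)\to Y$ with $\sL$ restricting to $\sO_{\mathbb{P}^r}(1)$ on the fibres provide counterexamples, realizing $\mu_\alpha(\sF\otimes\sL^\ast)\le 0$ with equality possible (for instance, a fibrewise tangent direction on $\mathbb{P}^1\times\mathbb{P}^1$) — while the remaining ingredients (uniruledness, the cone theorem, the structure of positive subsheaves of $T_{\mathbb{P}^n}$) are by now standard.
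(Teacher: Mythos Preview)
The paper does not give its own proof of Theorem~\ref{thm:hoering}; the two assertions are attributed to \cite[Theorem~1.1]{hoering_fol} and \cite[Theorem~1.3]{adk08} and then used as a black box (for instance in the proof of Theorem~\ref{thm:ko}). So there is no in-paper argument to compare your sketch against.

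That said, your sketch has genuine gaps beyond the points you flag as ``delicate''.

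\emph{The case $r=n$.} You invoke the cone theorem and the Cho--Miyaoka--Shepherd-Barron characterization of $\mathbb{P}^n$, but both require far more than normality: the cone theorem needs $K_X$ to be $\mathbb{Q}$-Cartier and $X$ to have (at least) klt singularities, and CMSB is a theorem about smooth varieties. The hypothesis here is only that $X$ is normal. H\"oring's route bypasses this entirely: the assumption gives $(K_X+nL)\cdot L^{n-1}<0$, hence $K_X+nL$ is not pseudo-effective, and after passing to a resolution one applies a Kobayashi--Ochiai--type criterion (this is \cite[Lemma~2.5]{hoering_fol}, which the present paper also invokes in Step~1 of the proof of Theorem~\ref{thm:positive_twisted_slope}).

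\emph{The reduction for $r<n$.} Two steps do not go through as written. First, ``transferring the hypothesis to $F$'' to obtain $(-K_F-r\,\sL_{|F})\cdot(\sL_{|F})^{r-1}>0$ is not a restriction: the curve class $[\sL^{n-1}]$ lives on $X$, and there is no reason a representative sits inside, or slices well against, a general fibre. What one actually gets is that $K_\sF+rL$ is not pseudo-effective on $X$; converting this into non--pseudo-effectivity of $K_{\widehat F}+rL_{|F}$ on a resolution of a leaf requires a positivity-of-the-relative-canonical-bundle argument of the type in Lemma~\ref{lemma:pseudo_effective}. Second, and more seriously, from $\varphi^\ast M\cdot\sL^{n-1}>-\sL^n$ you cannot conclude $\dim Y=0$: nothing prevents $M$ from being ample on a positive-dimensional $Y$, which makes the left side large and the inequality trivially satisfied. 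You yourself observe that for an arbitrary movable class this step must fail because generic $\mathbb{P}^r$-bundles survive; but the argument you wrote uses nothing specific to the ample class $\alpha=[\sL^{n-1}]$ that would exclude them. Passing from ``general leaf $\cong\mathbb{P}^r$'' to ``$X\cong\mathbb{P}^n$'' is exactly the substantive content of H\"oring's theorem and requires a different idea than a numerical inequality on $\varphi^\ast M$.
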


Projective space bundles provide counter-examples to the statement of Theorem~\ref{thm:hoering} if we replace the ample class 
$\alpha=[\sL^{n-1}]$ with  more general movable curve classes. More precisely, 
let $Y$ be a complex projective manifold, and $\sE$ an ample vector bundle of rank
$r+1 \ge 2$ on $Y$. Consider the projectivization $X=\mathbb{P}_Y(\sE)$, with tautological line bundle $\sO_X(1)$ and natural morphism
$\pi\colon X \to Y$. Let $\alpha \in \Mov (X)$ be the class of a line contained in a fiber of $\pi$.
Then $\mu_\alpha\big(T_{X/Y}\otimes\sO_X(-1)\big)=\frac{1}{r}>0$.
So there is an open neighborhood $U\subset \Mov (X)$ of $\alpha$ such that 
$\mu_\beta\big(T_{X/Y}\otimes\sO_X(-1)\big)>0$ for every $\beta\in U$. 
The following characterization of generic $\mathbb{P}^r$-bundles shows that these are all the new examples that arise
when the ample class $\alpha=[\sL^{n-1}]$ is replaced with an arbitrary movable class $\alpha\in\Mov(X)$ in Theorem~\ref{thm:hoering}.
By a \emph{generic $\mathbb{P}^r$-bundle} we mean an almost proper dominant map 
$X \dashrightarrow Y$ to a normal projective variety $Y$ with general fiber isomorphic to $\mathbb{P}^r$.

\begin{thm}\label{thm:p^-bdles}
Let $X$ be a normal $\mathbb{Q}$-factorial complex projective variety, and $\sL$ an ample line bundle on $X$.
If $\mu^{\max}_\alpha(T_X\otimes\sL^*)>0$ for some movable curve class 
$\alpha\in\Mov(X)$, then $X$ is a generic $\mathbb{P}^r$-bundle for some positive integer $r$.
\end{thm}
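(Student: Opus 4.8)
The plan is to produce from the hypothesis a foliation on $X$ whose anticanonical class is very positive relative to $\sL$, to prove that this foliation is algebraically integrable with rationally connected leaves, and then to identify the closure of a general leaf with $\p^r$ via a Cho--Miyaoka--Shepherd-Barron / Kobayashi--Ochiai type argument; the map to the space of leaves then furnishes the generic $\p^r$-bundle structure.

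\emph{Step 1 (extracting a foliation).} Since $\mu^{\max}_\alpha(T_X\otimes\sL^*)>0$, the maximal destabilizing subsheaf $\sG\subseteq T_X\otimes\sL^*$ with respect to $\alpha$ is $\alpha$-semistable, saturated, and has $\mu_\alpha(\sG)=\mu^{\max}_\alpha(T_X\otimes\sL^*)>0$. Set $\sF:=\sG\otimes\sL\subseteq T_X$: it is saturated with torsion-free quotient, $\alpha$-semistable, and $\mu_\alpha(\sF)=\mu_\alpha(\sG)+(\sL\cdot\alpha)>(\sL\cdot\alpha)>0$. To see that $\sF$ is closed under the Lie bracket, consider the $\cO_X$-linear O'Neill tensor $\wedge^2\sF\to T_X/\sF$. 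Over $\c$, exterior powers of $\alpha$-semistable sheaves are $\alpha$-semistable (see Section~\ref{movable_classes}), so $\wedge^2\sF$ is $\alpha$-semistable of slope $2\mu_\alpha(\sF)$, while $\mu^{\max}_\alpha(T_X/\sF)=\mu^{\max}_\alpha\big((T_X\otimes\sL^*)/\sG\big)+(\sL\cdot\alpha)<\mu_\alpha(\sF)$ because $\sG$ is the \emph{maximal} destabilizing subsheaf. Since $\mu_\alpha(\sF)>0$ we get $2\mu_\alpha(\sF)>\mu^{\max}_\alpha(T_X/\sF)$, hence the O'Neill tensor vanishes. Writing $r:=\rank\sF$, the slope inequality reads $\big(-K_\sF-r\sL\big)\cdot\alpha>0$; in particular $K_\sF+r\sL$ is not pseudo-effective.

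\emph{Step 2 (algebraic integrability).} Because $\sF$ is $\alpha$-semistable with $\mu^{\min}_\alpha(\sF)=\mu_\alpha(\sF)>0$ for the movable class $\alpha$, I would invoke the algebraicity criterion for foliations positive along a movable class (the circle of ideas of Bogomolov--McQuillan, Bost and Campana--P\u{a}un) to conclude that $\sF$ is algebraically integrable and that the closure of a general leaf is rationally connected. Let $F$ be a general leaf, $\bar F\subseteq X$ its closure, and $\nu\colon\bar F^{\,\nu}\to X$ the normalization followed by the inclusion; then $\bar F^{\,\nu}$ is a normal rationally connected variety of dimension $r$, the natural map $\nu^*\sF\to T_{\bar F^{\,\nu}}$ is generically an isomorphism, and passing to determinants gives an effective divisor $D$ with $-K_{\bar F^{\,\nu}}=\nu^*(-K_\sF)+D$.

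\emph{Step 3 (leaves are projective spaces) and conclusion.} This is the heart of the proof. Forming the algebraic integrability diagram $X\xleftarrow{\,e\,}\widetilde X\xrightarrow{\,\psi\,}T$ ($e$ birational, $\psi$ equidimensional with general fibre $\bar F^{\,\nu}$ and $e^*\sF$ generically equal to $T_{\widetilde X/T}$), one transports the polarization $\sL$ and the positivity datum through $e$ and $\psi$. Tracking slopes and Harder--Narasimhan filtrations along $\psi$, and using the $\alpha$-semistability of $\sF$ together with the \emph{strict} inequality $\big(-K_\sF-r\sL\big)\cdot\alpha>0$, one shows that the general leaf closure $\bar F^{\,\nu}$ is a Fano variety of dimension $r$ with $-K_{\bar F^{\,\nu}}\cdot\ell\ge r+1$ for every rational curve $\ell$. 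Since $\bar F^{\,\nu}$ is rationally connected, the theorem of Cho--Miyaoka--Shepherd-Barron and its extension to the normal setting by Kebekus then force $\bar F^{\,\nu}\cong\p^r$. A standard argument on the family of leaves upgrades this to: the general fibre of $X\map T$ is isomorphic to $\p^r$, so that $X$ is a generic $\p^r$-bundle. I expect the transfer of positivity in Step 3 to be the main obstacle: the hypothesis provides a single movable class on $X$, and squeezing from it the sharp bound $-K_{\bar F^{\,\nu}}\cdot\ell\ge r+1$ on curves inside a general leaf is precisely where the ampleness of $\sL$ (rather than mere positivity) and the maximality of $\sG$ in the Harder--Narasimhan filtration are essential. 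Throughout, $\bQ$-factoriality of $X$ is used to make sense of $\det\sF=-K_\sF$ and $D$ as $\bQ$-Cartier divisors, of the intersection numbers and pseudo-effectivity statements above, and of the birational model $\widetilde X$.
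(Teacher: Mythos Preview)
Your Steps~1 and~2 are correct and coincide with the paper's approach: the paper also takes the maximal destabilizing subsheaf $\sT_1\subseteq T_X$ with respect to $\alpha$, observes it is a foliation via the same Lie-bracket/slope argument (Lemma~\ref{CP15_lemma_4.9}), and invokes Campana--P\u{a}un to get algebraic integrability with rationally connected leaf closures (Corollary~\ref{alg_integrability}).

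The genuine gap is in Step~3. You propose to reach $\bar F^{\,\nu}\cong\mathbb{P}^r$ via Cho--Miyaoka--Shepherd-Barron/Kebekus, for which you would need $-K_{\bar F^{\,\nu}}\cdot\ell\ge r+1$ for a covering family of rational curves $\ell$ in the leaf. But the hypothesis only gives $(K_\sF+rL)\cdot\alpha<0$ for a single movable class $\alpha$ on $X$, and there is no mechanism in your outline to produce from this any curve \emph{inside the leaf} with controlled $\sL$-degree, let alone a covering family. ``Tracking slopes and Harder--Narasimhan filtrations along $\psi$'' does not do this: $\alpha$ need not restrict to anything on a general fibre. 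You yourself flag this as the main obstacle, and indeed it is a real one; I do not see how to close it along the CMSB route.

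The paper circumvents this difficulty by a completely different transfer of positivity. The key input is Lemma~\ref{lemma:pseudo_effective}: if $K_{\widehat F}+r\,\nu_F^*L_{|F}$ were pseudo-effective on a resolution $\widehat F$ of the general leaf, then $K_\sF+rL$ would be pseudo-effective on $X$, contradicting $(K_\sF+rL)\cdot\alpha<0$. Thus $K_{\widehat F}+r\,\nu_F^*L_{|F}$ is \emph{not} pseudo-effective, and H\"oring's Kobayashi--Ochiai type lemma \cite[Lemma~2.5]{hoering_fol} immediately gives $(F,\sL_{|F})\cong(\mathbb{P}^r,\sO_{\mathbb{P}^r}(1))$. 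This global-to-local step (relying on weak positivity of relative canonical bundles, \cite[Lemma~2.14]{hoering}) is precisely what replaces your missing ingredient.

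There is a second point you gloss over. Knowing that the general leaf closure is $\mathbb{P}^r$ does not yet give a generic $\mathbb{P}^r$-bundle: one must rule out that the exceptional locus of the family-of-leaves map $\nu\colon Z\to X$ dominates the base $Y$ (otherwise $X\dashrightarrow Y$ is not almost proper). The paper handles this with Lemma~\ref{lemma:pseudo-effective_cone}, showing that if some exceptional divisor dominated $Y$ then $K_\sF+rL$ would again be pseudo-effective. Your ``standard argument on the family of leaves'' does not address this.
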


Theorem~\ref{thm:p^-bdles} follows from the more refined statement in Theorem~\ref{thm:positive_twisted_slope}.

\begin{cor}\label{cor:p^-bdles_singular_case} \label{cor:p^-bdles}
Let $X$ be a normal $\mathbb{Q}$-factorial complex  projective variety, and  suppose that $X$ is not a generic $\mathbb{P}^r$-bundle for any positive integer $r$. Let $\sL$ be an ample line bundle on $X$.
Then, for any positive integer $m$ and any  torsion-free  quotient  $(\Omega_X^1\otimes\sL)^{\otimes m}\twoheadrightarrow\sQ$ of positive rank, $\det(\sQ)$ is pseudo-effective. 
\end{cor}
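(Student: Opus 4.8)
The plan is to derive the corollary from Theorem~\ref{thm:p^-bdles} by a slope computation, together with the duality between movable curve classes and pseudo-effective divisor classes. Since $X$ is not a generic $\p^r$-bundle for any positive integer $r$, the contrapositive of Theorem~\ref{thm:p^-bdles} shows that
\[
\mu^{\max}_\alpha\bigl(T_X\otimes\sL^*\bigr)\le 0\qquad\text{for every }\alpha\in\Mov(X).
\]
Hence it suffices, given $m$ and a torsion-free quotient $(\Omega_X^1\otimes\sL)^{\otimes m}\twoheadrightarrow\sQ$ of positive rank, to prove that $\det(\sQ)\cdot\alpha\ge 0$ for every $\alpha\in\Mov(X)$: the pseudo-effectivity of $\det(\sQ)$ then follows from the duality between $\Mov(X)$ and the cone of pseudo-effective divisor classes.

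Fix $\alpha\in\Mov(X)$. Since $\sQ$ is torsion-free, the surjection factors through $\sG:=(\Omega_X^1\otimes\sL)^{\otimes m}/(\text{torsion})$, and because the slope of a torsion-free quotient is bounded below by the minimal slope of the sheaf (one of the properties of slopes with respect to movable classes recalled in Section~\ref{movable_classes}), we get
\[
\det(\sQ)\cdot\alpha=\rank(\sQ)\cdot\mu_\alpha(\sQ)\ \ge\ \rank(\sQ)\cdot\mu^{\min}_\alpha(\sG)\ =\ -\,\rank(\sQ)\cdot\mu^{\max}_\alpha\bigl(\sG^{*}\bigr),
\]
the last equality being the standard duality $\mu^{\min}_\alpha(-)=-\mu^{\max}_\alpha\bigl((-)^{*}\bigr)$. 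Now $\sG^{*}=\bigl((\Omega_X^1\otimes\sL)^{\otimes m}\bigr)^{*}$ agrees with $(T_X\otimes\sL^*)^{\otimes m}$ on the smooth locus of $X$, whose complement has codimension at least $2$ because $X$ is normal; since slopes with respect to $\alpha$ are insensitive to modifications in codimension $\ge 2$ (in particular to passing to the reflexive hull), we conclude $\mu^{\max}_\alpha(\sG^{*})=\mu^{\max}_\alpha\bigl((T_X\otimes\sL^*)^{\otimes m}\bigr)$.

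It remains to bound $\mu^{\max}_\alpha\bigl((T_X\otimes\sL^*)^{\otimes m}\bigr)$, and here I would invoke the behaviour of the Harder--Narasimhan filtration under tensor products for movable curve classes recalled in Section~\ref{movable_classes}: in characteristic zero the tensor product of $\mu_\alpha$-semistable sheaves is $\mu_\alpha$-semistable, so $\mu^{\max}_\alpha$ is additive on tensor products. Applying this to the $m$ factors gives $\mu^{\max}_\alpha\bigl((T_X\otimes\sL^*)^{\otimes m}\bigr)=m\,\mu^{\max}_\alpha\bigl(T_X\otimes\sL^*\bigr)\le 0$. Combining with the previous display yields $\det(\sQ)\cdot\alpha\ge 0$, and the corollary follows.

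The genuine input is Theorem~\ref{thm:p^-bdles} (which itself rests on Theorem~\ref{thm:positive_twisted_slope}); the remainder is formal. I do not expect a real obstacle here: the only point requiring some care is the bookkeeping with slopes of the possibly non-locally-free sheaves $\Omega_X^1$, $T_X$ and their tensor powers on the normal $\bQ$-factorial variety $X$ — dealt with throughout by restricting to the smooth locus — together with the correct invocation of the tensor-product theorem for $\mu_\alpha$-semistable sheaves.
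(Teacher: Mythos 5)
Your proposal is correct and follows essentially the same route as the paper: the paper argues by contradiction, assuming $\mu_\alpha(\sQ)<0$ for some movable $\alpha$, deducing $\mu_\alpha^{\min}\big((\Omega_X^1\otimes\sL)^{\otimes m}\big)<0$, hence $\mu_\alpha^{\max}\big((T_X\otimes\sL^*)^{\otimes m}\big)>0$, then invoking the tensor-product theorem for $\mu_\alpha$-semistability \cite[Theorem 4.2]{gkp_movable} and Theorem~\ref{thm:positive_twisted_slope} to produce a generic $\mathbb{P}^r$-bundle structure — exactly your chain of inequalities run in the opposite direction. The only cosmetic difference is that you phrase it contrapositively and make explicit the duality with the pseudo-effective cone, which the paper leaves implicit.
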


Next we apply these results to   investigate algebraicity of leaves of holomorphic foliations on complex projective manifolds.

\medskip

A central problem in the theory of holomorphic foliations is to find conditions that guarantee the existence of algebraic leaves.
Algebraic leaves of holomorphic foliations correspond to algebraic solutions of complex differential equations. 
It has been noted that positivity of foliations on complex projective varieties 
tend to improve algebraicity of leaves (\cite{CLN}, \cite{bost}, \cite{bogomolov_mcquillan01}, \cite{campana_paun}, \cite{fano_fols}, \cite{lpt3fold}, \cite{codim_1_del_pezzo_fols}, \cite{fano_fols_2}, \cite{codim_1_mukai_fols}, \cite{campana_paun15}, \cite{druel15}, and \cite{bobo}). 
In order to measure algebraicity of leaves, we introduce the \emph{algebraic rank}. 
The algebraic rank  $r^a(\sF)$ of a holomorphic foliation $\sF$ on a complex algebraic variety $X$
is the maximum dimension of an algebraic subvariety  through a general point of $X$
that is tangent to $\sF$. 
These maximal algebraic subvarieties tangent to $\sF$ are the leaves of a subfoliation $\sF^a\subseteq \sF$, 
the \emph{algebraic part} of $\sF$ (see Definition~\ref{dfn:algebraic_part}).

In a series of papers, we have addressed \emph{Fano foliations} (\cite{fano_fols}, \cite{codim_1_del_pezzo_fols}, \cite{fano_fols_2}
and \cite{codim_1_mukai_fols}). 
These are holomorphic foliations $\sF$ on complex projective varieties 
with ample anti-canonical class  $-K_{\sF}$.
For a Fano foliation $\sF$ on a  complex projective manifold $X$,
a rough measure of positivity is the \emph{index} $\iota(\sF)$, which is the largest integer 
dividing $-K_{\sF}$ in $\Pic(X)$. 
Our works on Fano foliations with high index indicated that 
the larger is the index, the larger is the algebraic rank of the Fano foliation. 
Now we investigate this relation between positivity and algebraicity of leaves
for a wider class of foliations than Fano foliations.

We first introduce a new invariant measuring positivity for foliations with big anti-canonical class.

\begin{defn}\label{defn:generalized_index}
Let $X$ be a complex projective manifold, and $\sF$  a holomorphic foliation on $X$ with big anti-canonical class  $-K_{\sF}$.
The  \emph{generalised index} of $\sF$ is defined as follows (see Lemma~\ref{lemma:generalized_index}). 
$$
\widehat{\iota}(\sF):=\max\big\{t\in\mathbb{R} \ | \ -K_{\sF} \equiv tA+E \text{ where } A \text{ is an ample divisor and }
E \text{ is a pseudo-effective }\mathbb{R}\text{-divisor}\big\}.
$$
\end{defn}

We provide a lower bound for the algebraic rank in terms of the generalised index, and classify
foliations attaining this bound. We refer to Definition~\ref{dfn:algebraic_part} for the notions of 
pull-back and purely transcendental foliations.

\begin{thm}\label{thm:ko}
Let $X$ be an $n$-dimensional complex projective manifold, and $\sF\subsetneq T_X$ a foliation on $X$ with
big anti-canonical class. Then the algebraic rank and the generalised index of $\sF$ satisfy 
$$
r^a(\sF) \ \ge \ \widehat{\iota}(\sF).
$$
Moreover, equality holds if and only if $X \cong \mathbb{P}^n$ and 
$\sF$ is the linear pull-back of a purely transcendental foliation on $\mathbb{P}^{n-r^a(\sF)}$ with zero canonical class.
\end{thm}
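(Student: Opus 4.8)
The plan is to reduce everything to the geometry of a general leaf of the algebraic part. First I note that since $-K_\sF$ is big we have $\widehat\iota(\sF)>0$, and $\sF$ cannot be purely transcendental: restricting $\sF$ to a general complete intersection curve of sufficiently ample divisors through a general point yields a subsheaf of positive degree, so by a Bogomolov–McQuillan/Campana–Paun type algebraicity criterion there is an algebraic leaf through the general point, i.e. $\sF^a\neq 0$. Let $\pi\colon X\dashrightarrow Y$ be the fibration induced by $\sF^a$, so the closure $\bar F$ of a general leaf is a general fibre of a resolution $\tilde\pi\colon\tilde X\to Y$; it is a smooth variety of dimension $r^a(\sF)$. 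I would then record the structural fact (as in the Fano foliations papers) that $\sF=\pi^{-1}\sG$ for a foliation $\sG$ on $Y$, and that $\sG$ is purely transcendental of rank $\rank\sF-r^a(\sF)$ — otherwise pulling back a positive-dimensional algebraic leaf of $\sG$ would contradict maximality of $r^a(\sF)$.

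Next I transfer positivity to $\bar F$. Write $-K_\sF\equiv\widehat\iota(\sF)A+E$ with $A$ ample and $E$ pseudo-effective. Since $\sF/\sF^a=\pi^*\sG$ has trivial determinant on the general fibre, $\det\sF|_{\bar F}=\det\sF^a|_{\bar F}$, and as $\sF^a$ induces a full-rank subsheaf of $T_{\bar F}$ there is an effective divisor $\Delta$ on $\bar F$ with $-K_{\bar F}=\det\sF^a|_{\bar F}+\Delta$. Because the leaves cover $X$, restriction keeps $E|_{\bar F}$ pseudo-effective and $A|_{\bar F}$ ample, so $-K_{\bar F}\equiv\widehat\iota(\sF)A|_{\bar F}+(\text{pseudo-effective})$; in particular $-K_{\bar F}$ is big and $\bar F$ is rationally connected. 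Taking a minimal rational curve $C$ through a general point of $\bar F$ (hence movable, with $A|_{\bar F}\cdot C\ge 1$ and the pseudo-effective part meeting $C$ non-negatively) gives $\widehat\iota(\sF)\le -K_{\bar F}\cdot C\le\dim\bar F+1=r^a(\sF)+1$.

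To remove the extra $1$ — and this is where the structure is essential — suppose $\widehat\iota(\sF)=r^a(\sF)+1$. Then all these inequalities are equalities, so $-K_{\bar F}\cdot C=r^a+1$ forces $\bar F\cong\mathbb P^{r^a}$ by Cho–Miyaoka–Shepherd-Barron, and moreover $A|_{\bar F}=\cO(1)$, $E|_{\bar F}=0$, $\Delta=0$, i.e. $\sF^a|_{\bar F}=T_{\bar F}$; thus $\tilde\pi$ is a genuine $\mathbb P^{r^a}$-bundle near $\bar F$, say $\tilde X\cong\mathbb P_Y(\sE)$ with $A$ pulling back to $\cO_{\tilde X}(1)$ after twisting $\sE$. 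Comparing $\det\tilde\sF=\det T_{\tilde X/Y}\otimes\tilde\pi^*\det\sG$ with the pullback of $-K_\sF\equiv(r^a+1)A+E$ then forces the pullback of $E$ to be numerically $\tilde\pi^*(\det\sG-\det\sE)$, whose pseudo-effectivity forces $-K_\sG=\det\sG$ to be big — contradicting the fact that the purely transcendental foliation $\sG$ has zero algebraic part (the principle used in the first paragraph). This proves $r^a(\sF)\ge\widehat\iota(\sF)$.

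For the equality case $r^a(\sF)=\widehat\iota(\sF)=:r$, the same analysis on $\bar F$ gives $r\le -K_{\bar F}\cdot C\le r+1$; the possibility $-K_{\bar F}\cdot C=r$, where $\bar F$ would be a quadric and $\tilde X$ a quadric bundle, is excluded by the identical pseudo-effectivity-versus-transcendence argument, so $\bar F\cong\mathbb P^r$, $A|_{\bar F}=\cO(1)$, and $E|_{\bar F}+\Delta\equiv\cO_{\mathbb P^r}(1)$. The hardest step is then the passage back to $X$: I would use the ample divisor $A$ together with the $\mathbb P^r$-bundle structure and the relation $-K_\sF\equiv rA+E$ to show that $-K_X$ is big with $-K_X\equiv(\dim X+1)A+(\text{pseudo-effective})$ — e.g. via Theorem~\ref{thm:p^-bdles} and its refinement applied to $T_X$ — hence $X\cong\mathbb P^n$ and $A=\cO_{\mathbb P^n}(1)$ by a generalised Kobayashi–Ochiai statement. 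Once $X=\mathbb P^n$, the general fibres of $\pi$ are $r$-planes (they have $A$-degree $1$), a family of $r$-planes one through each general point must be the family through a fixed $\mathbb P^{r-1}$, so $Y\cong\mathbb P^{n-r}$ and $\pi$ is the linear projection; finally the determinant formula $-K_\sF=-K_X+\pi^*(K_Y+\det\sG)$ together with $\widehat\iota(\sF)=r$ (which forces $-K_\sF=\cO_{\mathbb P^n}(r)$) gives $\det\sG\equiv 0$, i.e. $K_\sG\equiv 0$, while $\sG$ is purely transcendental by construction. The converse is the reverse computation: for such $\sF$ one finds $-K_\sF=\cO_{\mathbb P^n}(r)$, so $\widehat\iota(\sF)=r=r^a(\sF)$. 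The main obstacles are the off-by-one in the inequality (handled by the transcendence principle) and, in the equality case, bootstrapping the leafwise positivity back to a generalised Kobayashi–Ochiai statement for $X$ itself.
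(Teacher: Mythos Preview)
Your approach --- restrict to a general leaf $\bar F$ of $\sF^a$ and bound $\widehat\iota$ via minimal rational curves on $\bar F$ --- is natural, but it only yields $\widehat\iota\le r^a+1$, and the removal of the extra $+1$ is where the proof really lives. Your exclusion argument has several gaps. First, you only treat the endpoint $\widehat\iota=r^a+1$; for $r^a<\widehat\iota<r^a+1$ you do not get $E|_{\bar F}=0$ and $\Delta=0$, so the argument as written does not apply. Second, even at the endpoint, ``$\Delta=0$ on the general leaf'' only says $\textup{Exc}(\nu)$ does not dominate $Y$ (cf.\ Lemma~\ref{lemma:dicritical}); it does not give a global identification $\tilde X\cong\mathbb P_Y(\sE)$, and your determinant comparison ignores the discrepancy between $K_{\tilde\sF}$ and $\nu^*K_\sF$ along the exceptional divisors of $\tilde X\to X$. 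Third, from ``$\tilde E$ is pseudo-effective and $\tilde E\equiv\tilde\pi^*(\det\sG-\det\sE)$'' you cannot conclude that $\det\sG$ is big: nothing forces $\det\sE$ to be big, so the contradiction with $\sG$ purely transcendental does not follow. The paper sidesteps the off-by-one entirely by working with slopes: for the ample class $\alpha_0=[\sL^{n-1}]$ one has $\mu_{\alpha_0}(\sF^+_{\alpha_0}\otimes\sL^*)\le 0$ by H\"oring's Theorem~\ref{thm:hoering} (otherwise already $X\cong\mathbb P^n$ and $\sF=T_X$), and then the chain $\widehat\iota\,\mu_{\alpha_0}(\sL)\le\det(\sF^+_{\alpha_0})\cdot\alpha_0\le r_{\alpha_0}\,\mu_{\alpha_0}(\sL)\le r^a\,\mu_{\alpha_0}(\sL)$ gives $\widehat\iota\le r^a$ directly.

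In the equality case your plan also has a genuine hole. You assert $-K_X\equiv (n+1)A+(\text{pseudo-effective})$ ``via Theorem~\ref{thm:p^-bdles} and its refinement'', but nothing in those statements produces such a decomposition of $-K_X$; that claim is essentially the conclusion you want, and you give no mechanism for it. (Also, $-K_{\bar F}\cdot C=r$ does not by itself force $\bar F$ to be a quadric.) The paper instead first shows $-K_{\sF^a}\equiv r^aL$ by varying $\alpha$ over all of $\Mov(X)$, so $\sF^a$ is a Fano foliation; then Corollary~\ref{delta_neq_0} gives $\Delta_F\neq 0$, H\"oring's lemma gives $(F,\sL|_F,\Delta_F)\cong(\mathbb P^{r^a},\sO(1),\sO(1))$, and the log-canonical ``common point'' result \cite[Proposition~5.3]{fano_fols} forces $\rho(X)=1$ and lines through a fixed point, whence $X\cong\mathbb P^n$ by \cite[Theorem~1.1]{adk08}. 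Your leafwise rational-curve idea does recover the identification $\bar F\cong\mathbb P^{r^a}$, but the passage from there back to $X\cong\mathbb P^n$ needs the common-point argument (or an equivalent), not a generalised Kobayashi--Ochiai bound on $-K_X$ that has not been established.
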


The following are immediate consequences of Theorem \ref{thm:ko}.

\begin{cor}
Let $X$ be an $n$-dimensional complex projective manifold, and $\sF\subsetneq T_X$ a Fano foliation of index $\iota(\sF)$ on $X$. 
Then the algebraic rank of $\sF$ satisfies the following inequality.
$$
 r^a(\sF) \ \ge \ {\iota}(\sF).
$$
Moreover, equality holds if and only if $X \cong \mathbb{P}^n$ and 
$\sF$ is the linear pull-back of a purely transcendental foliation on $\mathbb{P}^{n-r^a(\sF)}$ with zero canonical class.
\end{cor}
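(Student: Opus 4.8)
The plan is to deduce the statement directly from Theorem~\ref{thm:ko}, the only extra ingredient being the comparison $\widehat{\iota}(\sF)\ge\iota(\sF)$ valid for every Fano foliation, which moreover becomes an equality on $\mathbb{P}^n$. First I would note that a Fano foliation $\sF$ has ample, in particular big, anti-canonical class, so Theorem~\ref{thm:ko} applies and gives $r^a(\sF)\ge\widehat{\iota}(\sF)$. To compare $\widehat{\iota}(\sF)$ with $\iota(\sF)$, write $-K_\sF\equiv\iota(\sF)\,H$ in $\Pic(X)$ for some line bundle $H$; since $-K_\sF$ is ample and $\iota(\sF)>0$, the divisor $H$ is itself ample, so taking $t=\iota(\sF)$, $A=H$ and $E=0$ in Definition~\ref{defn:generalized_index} shows $\widehat{\iota}(\sF)\ge\iota(\sF)$. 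Combining, $r^a(\sF)\ge\widehat{\iota}(\sF)\ge\iota(\sF)$, which is the asserted inequality.

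For the equality statement, suppose first that $r^a(\sF)=\iota(\sF)$. Then the chain of inequalities above consists of equalities; in particular $r^a(\sF)=\widehat{\iota}(\sF)$, and the equality part of Theorem~\ref{thm:ko} yields $X\cong\mathbb{P}^n$ and the stated description of $\sF$ as a linear pull-back of a purely transcendental foliation on $\mathbb{P}^{n-r^a(\sF)}$ with zero canonical class. Conversely, assume $X\cong\mathbb{P}^n$ and $\sF$ is such a linear pull-back. The ``if'' direction of the equality part of Theorem~\ref{thm:ko} then gives $r^a(\sF)=\widehat{\iota}(\sF)$, so it remains to check that $\widehat{\iota}(\sF)=\iota(\sF)$ on $\mathbb{P}^n$. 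Writing $-K_\sF\equiv dH$ with $d$ a positive integer, we have $\iota(\sF)=d$ since $\Pic(\mathbb{P}^n)=\mathbb{Z}H$; and in any decomposition $-K_\sF\equiv tA+E$ with $A$ an ample (integral) divisor and $E$ pseudo-effective, reading off coefficients of $H$ forces $t\le d$, with equality for $A=H$, $E=0$. Hence $\widehat{\iota}(\sF)=d=\iota(\sF)$ and $r^a(\sF)=\iota(\sF)$, as desired.

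Since the argument is a formal consequence of Theorem~\ref{thm:ko} together with a one-line computation in $N^1(\mathbb{P}^n)$, I do not expect any genuine obstacle. The only point deserving a moment's care is the identification $\widehat{\iota}(\sF)=\iota(\sF)$ on projective space, and this is immediate from $\Pic(\mathbb{P}^n)=\mathbb{Z}H$ and the description of its pseudo-effective and ample cones.
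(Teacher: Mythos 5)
Your proposal is correct and is exactly the argument the paper has in mind when it calls this corollary an ``immediate consequence'' of Theorem~\ref{thm:ko}: the only added content is the comparison $\widehat{\iota}(\sF)\ge\iota(\sF)$ (take $A=\tfrac{1}{\iota(\sF)}(-K_\sF)$, $E=0$ in Definition~\ref{defn:generalized_index}) together with the observation that on $\mathbb{P}^n$ the two indices coincide because $\Pic(\mathbb{P}^n)=\mathbb{Z}H$ forces $t\le d$ in any decomposition $dH\equiv tA+E$ with $A$ integral ample and $E$ pseudo-effective. Both directions of the equality case are handled correctly, so there is nothing to add.
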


\begin{cor}\label{cor:degree 0}
Let $X$ be an $n$-dimensional complex projective manifold,  $L$ an ample divisor on $X$,
and $\sF\subsetneq T_X$ a foliation of rank $r$ on $X$. 
Suppose that $-K_\sF-rL$ is pseudo-effective. 
Then $\big(X,\sO_X(L)\big)\cong \big(\mathbb{P}^n,\sO_{\mathbb{P}^n}(1)\big)$, and $\sF$ is induced by a linear projection of $\mathbb{P}^n$.
\end{cor}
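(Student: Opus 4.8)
\emph{Proof proposal.} The plan is to deduce the statement from Theorem~\ref{thm:ko}, by arguing that the hypothesis forces equality in the bound $r^a(\sF)\ge\widehat\iota(\sF)$. Since $\sF$ has positive rank $r$, the divisor $rL$ is ample, and the decomposition $-K_\sF\equiv rL+(-K_\sF-rL)$ writes $-K_\sF$ as the sum of an ample divisor and the pseudo-effective $\mathbb{R}$-divisor $-K_\sF-rL$; in particular $-K_\sF$ is big and $\widehat\iota(\sF)$ is defined. Reading this same decomposition with $A=L$, $E=-K_\sF-rL$ and $t=r$ shows that $r$ belongs to the set defining the generalised index, so $\widehat\iota(\sF)\ge r$. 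On the other hand the algebraic part $\sF^a\subseteq\sF$ satisfies $r^a(\sF)=\rank(\sF^a)\le\rank(\sF)=r$. Combining this with Theorem~\ref{thm:ko} gives
$$
r\ \ge\ r^a(\sF)\ \ge\ \widehat\iota(\sF)\ \ge\ r,
$$
so that $r^a(\sF)=\widehat\iota(\sF)=r$ and we are in the equality case of Theorem~\ref{thm:ko}.

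It then remains to unwind that equality case. Theorem~\ref{thm:ko} gives $X\cong\mathbb{P}^n$ and presents $\sF$ as the linear pull-back, via a linear projection $p\colon\mathbb{P}^n\dashrightarrow\mathbb{P}^{n-r^a(\sF)}$, of a purely transcendental foliation $\sG$ with $K_\sG\equiv 0$ on $\mathbb{P}^{n-r^a(\sF)}=\mathbb{P}^{n-r}$; this target is a genuine positive-dimensional projective space because $\sF\subsetneq T_X$ forces $r\le n-1$. Since a general leaf of $\sF$ fibres over a leaf of $\sG$ with fibre a linear space of dimension $r^a(\sF)$, we have $\rank(\sF)=r^a(\sF)+\rank(\sG)$, hence $\rank(\sG)=0$. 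Thus $\sG$ is the foliation by points and the leaves of $\sF$ are exactly the $\mathbb{P}^r$'s fibring $p$; that is, $\sF$ is induced by a linear projection of $\mathbb{P}^n$.

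Finally I would pin down the polarization. For the foliation induced by a linear projection $\mathbb{P}^n\dashrightarrow\mathbb{P}^{n-r}$ one computes $-K_\sF\equiv rH$, where $H$ is the hyperplane class (for instance by restricting $\det T_\sF$ to a general leaf, or from the standard degree formula for foliations defined by linear maps). The hypothesis that $-K_\sF-rL\equiv r(H-L)$ be pseudo-effective then forces $L\equiv H$, since an ample divisor on $\mathbb{P}^n$ is linearly equivalent to $dH$ for some integer $d\ge 1$ and $r(H-dH)$ is pseudo-effective only for $d=1$. Hence $\big(X,\sO_X(L)\big)\cong\big(\mathbb{P}^n,\sO_{\mathbb{P}^n}(1)\big)$, as claimed. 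The only step here that is not formal bookkeeping around Theorem~\ref{thm:ko} is this last computation of $-K_\sF$ for the linear-projection foliation; I expect it to be the (minor) main obstacle.
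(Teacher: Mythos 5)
Your proof is correct and follows exactly the route the paper intends: the corollary is stated there as an immediate consequence of Theorem~\ref{thm:ko}, obtained precisely as you do by noting $\widehat{\iota}(\sF)\ge r\ge r^a(\sF)\ge \widehat{\iota}(\sF)$ and unwinding the equality case (with $\mathrm{rank}(\sG)=0$ forcing $\sF$ to be induced by the projection itself). Your closing computation $-K_\sF\equiv rH$ for the linear-projection foliation, needed to pin down $\sO_X(L)\cong\sO_{\mathbb{P}^n}(1)$, is standard and correct (e.g.\ via \eqref{morphism_fol} or adjunction on the log leaf $(\mathbb{P}^r,\mathbb{P}^{r-1})$).
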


Finally, we address foliations $\sF$ whose algebraic rank and  generalised index satisfy $r^a(\sF) \le  \widehat{\iota}(\sF) +1$.

\begin{thm}\label{thm:rat_connectedness_leaves}
Let $X$ be a complex projective manifold, and $\sF\subsetneq T_X$ a foliation on $X$  with
big anti-canonical class. 
Suppose that the algebraic rank and the generalised index of $\sF$ satisfy $r^a(\sF) \le  \widehat{\iota}(\sF) +1$.
Then the closure of a general leaf of $\sF^a$ is rationally connected.
\end{thm}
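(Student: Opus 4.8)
The plan is to reduce the theorem to the rational connectedness of the normalization $\widetilde{F}$ of the closure of a general leaf of the algebraic part $\sF^a$, and then to derive that from the bigness of $-K_\sF$ through the adjunction formula for algebraically integrable foliations. I begin by setting $r:=r^a(\sF)$, so that $\dim\widetilde{F}=r$, and recalling that by Theorem~\ref{thm:ko} one has $\widehat{\iota}(\sF)\le r\le\widehat{\iota}(\sF)+1$. If $r=\widehat{\iota}(\sF)$, the classification part of Theorem~\ref{thm:ko} applies: $X\cong\mathbb{P}^n$ and $\sF$ is a linear pull-back, so the closure of a general leaf of $\sF^a$ is a linear subspace $\mathbb{P}^r\subseteq\mathbb{P}^n$, which is rationally connected, and there is nothing more to prove. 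Hence I may assume $r=\widehat{\iota}(\sF)+1$, and it suffices to show that $\widetilde{F}$ is rationally connected.

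The key step is a positivity computation on $\widetilde{F}$. Since through a general point of $X$ there passes an $r$-dimensional subvariety tangent to $\sF$, the leaves of $\sF^a$ cover $X$, so the image of a general leaf avoids the augmented base locus $\mathbf{B}_+(-K_\sF)$; consequently $i^*(-K_\sF)$ is big on $\widetilde{F}$, where $i\colon\widetilde{F}\to X$ is the natural finite morphism. Applying the adjunction formula for algebraically integrable foliations (in the form of \cite{fano_fols}) to $\sF^a$ produces an effective $\mathbb{Q}$-divisor $\Delta$ on $\widetilde{F}$ with $K_{\widetilde{F}}+\Delta\sim_{\mathbb{Q}}i^*K_{\sF^a}$. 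Using the description of $\sF^a$ through the family of leaves (Definition~\ref{dfn:algebraic_part}), the determinant of the transcendental quotient $\sF/\sF^a$ restricts to a numerically trivial divisor on a general leaf, so $i^*K_{\sF^a}\equiv i^*K_\sF$ and therefore $-(K_{\widetilde{F}}+\Delta)\equiv i^*(-K_\sF)$ is big. Writing $-K_\sF\equiv\widehat{\iota}(\sF)A+E$ with $A$ ample and $E$ pseudo-effective and restricting, I moreover obtain $-(K_{\widetilde{F}}+\Delta)\equiv\widehat{\iota}(\sF)\,i^*A+i^*E$ with $\widehat{\iota}(\sF)=\dim\widetilde{F}-1$; thus the anti-log-canonical divisor of the pair $(\widetilde{F},\Delta)$ is big and, in the pseudo-effective order, at least $(\dim\widetilde{F}-1)$ times an ample divisor.

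The final step, which I expect to be the main obstacle, is to pass from this to rational connectedness of $\widetilde{F}$. If $(\widetilde{F},\Delta)$ were klt one could simply invoke the rational connectedness of klt log Fano type pairs; in general one must first control the singularities of $\widetilde{F}$. I would argue via bend and break through a general — hence smooth — point of $\widetilde{F}$: $-K_{\widetilde{F}}$ is big, so $\widetilde{F}$ is uniruled, and by the inequality above one can choose a covering family of rational curves $\{C\}$ with $\dim\widetilde{F}-1\le -K_{\widetilde{F}}\cdot C\le\dim\widetilde{F}+1$. This squeezes the target of the maximal rationally connected fibration of $\widetilde{F}$ to have dimension at most two, with general fibre a projective space or a smooth quadric. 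The delicate point is to exclude that $\widetilde{F}$ is such a bundle over a positive-dimensional non-uniruled base, since such a $\widetilde F$ would still have big anti-canonical class without being rationally connected; here one uses the variation of the leaves in their family over $Y$ together with the bound $\widehat{\iota}(\sF)=\dim\widetilde{F}-1$ coming from the hypothesis $r^a(\sF)\le\widehat{\iota}(\sF)+1$, in the spirit of the proof of Theorem~\ref{thm:ko}, to reach a contradiction. Once these bundle cases are ruled out, the maximal rationally connected fibration of $\widetilde{F}$ is trivial, so $\widetilde{F}$, and hence the closure of a general leaf of $\sF^a$, is rationally connected.
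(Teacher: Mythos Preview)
Your approach is fundamentally different from the paper's and contains a genuine gap precisely where you flag the ``main obstacle''. The paper never attempts to prove rational connectedness of the leaf from its intrinsic geometry. Instead it works entirely on $X$ with slopes against movable classes: for each $\alpha\in\Mov(X)$ it compares $\sF_\alpha^+$ with $\sF^a$, and whenever $\sF_\alpha^+=\sF^a$ (or $\sF^a=\sF_\alpha^++\sF_\beta^+$ for two such classes), rational connectedness of the general leaf drops out immediately from Campana--P\u{a}un's criterion (Theorem~\ref{algebraic_integrability_criterion}), since $\mu_\alpha^{\min}(\sF_\alpha^+)>0$. The remaining cases are handled by Theorem~\ref{thm:positive_twisted_slope} (generic $\mathbb{P}^r$-bundle structures) and Lemma~\ref{lemma:alg_rank_one}, eventually reducing to a relative anticanonical argument over a curve. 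Positivity of the leaf's anti-log-canonical divisor is never invoked.

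Concretely, your argument breaks at two points. First, writing $-K_\sF\equiv\widehat{\iota}\,A+E$ with $E$ pseudo-effective and restricting to a general leaf does not give $i^*E$ pseudo-effective: restriction of pseudo-effective classes to positive-codimension subvarieties is not pseudo-effective in general, so the lower bound $-K_{\widetilde F}\cdot C\ge\dim\widetilde F-1$ for a covering family of curves is unjustified. Second, even granting that bound, the passage from ``MRC base of dimension $\le 2$'' to ``MRC base is a point'' is not carried out; you appeal to ``variation of the leaves'' and ``the spirit of the proof of Theorem~\ref{thm:ko}'' without an actual mechanism, and the pair $(\widetilde F,\Delta_F)$ is typically \emph{not} klt (cf.\ \cite[Proposition~5.3]{fano_fols}), so log-Fano rational connectedness results do not apply. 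There is no known way to conclude rational connectedness of a normal variety merely from bigness of $-K$ plus a high pseudo-index bound without control on singularities, which is exactly why the paper's proof routes through Campana--P\u{a}un on $X$ rather than through adjunction on the leaf.

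A minor additional issue: you reduce to $r^a=\widehat\iota+1$, but $\widehat\iota$ need not be an integer (see the Question following the proof), so the dichotomy $r^a=\widehat\iota$ versus $r^a=\widehat\iota+1$ is not exhaustive.
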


The following is an immediate consequence of Theorem \ref{thm:rat_connectedness_leaves}.

\begin{cor}
Let $X$ be a complex projective manifold, and $\sF\subsetneq T_X$ a Fano foliation of index $\iota(\sF)$ on $X$. 
Suppose that the algebraic rank of $\sF$ satisfies $r^a(\sF) = {\iota}(\sF) +1$.
Then the closure of a general leaf of $\sF^a$ is rationally connected.
\end{cor}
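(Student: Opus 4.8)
The plan is to reduce to the case where $\sF$ is algebraically integrable, and then to bootstrap on Theorem~\ref{thm:ko} applied to a suitable subfoliation. \emph{Reduction and set-up.} First replace $\sF$ by its algebraic part $\sF^a$. Since $K_{\sF/\sF^a}$ is pseudoeffective — one of the positivity-in-families statements underlying the paper — the class $-K_{\sF^a}=-K_\sF+K_{\sF/\sF^a}$ is again big, and $\widehat{\iota}(\sF^a)\ge\widehat{\iota}(\sF)$; moreover $r^a(\sF^a)=r^a(\sF)=\rank\sF^a$ and $\sF^a\subsetneq T_X$, so $\sF^a$ satisfies the hypotheses of the theorem and coincides with its own algebraic part, whence proving the statement for $\sF^a$ in place of $\sF$ gives exactly the desired conclusion. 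So I may assume $\sF$ is algebraically integrable; put $r:=\rank\sF=r^a(\sF)$ and $t:=\widehat{\iota}(\sF)$. Theorem~\ref{thm:ko} gives $t\le r$, the hypothesis gives $t\ge r-1$, and bigness of $-K_\sF$ gives $t\ge 1$. If $t=r$, the equality case of Theorem~\ref{thm:ko} applies to $\sF\subsetneq T_X$; since $\sF$ is algebraically integrable the purely transcendental factor has rank zero, so $\sF$ is the relative tangent sheaf of a linear projection $\mathbb{P}^n\dashrightarrow\mathbb{P}^{n-r}$ and a general leaf is a $\mathbb{P}^r$, which is rationally connected. So from now on $t\in[r-1,r)$, hence $r\ge 2$.

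\emph{Uniruledness of the leaves and the rationally connected part of $\sF$.} Let $F$ be the closure of a general leaf and $\nu\colon\widehat F\to F$ a resolution of its normalisation. By the standard comparison of canonical classes for leaves of algebraically integrable foliations, $-K_{\widehat F}$ differs from $\nu^*(-K_\sF|_F)$ by an effective divisor; writing $-K_\sF=A'+E'$ with $A'$ ample and $E'$ effective, and noting that a general $F$ is not contained in $\Supp(E')$, the class $-K_\sF|_F=A'|_F+E'|_F$ is big, hence $-K_{\widehat F}$ is big and $\widehat F$ is uniruled. Let $\widehat F\dashrightarrow W$ be the maximal rationally connected fibration and $q:=\dim W$; I must show $q=0$, so suppose $q\ge 1$. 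The maximal rationally connected fibrations of the leaves of $\sF$ glue to an algebraically integrable subfoliation $\sG\subsetneq\sF$ whose general leaf is a general fibre of $\widehat F\dashrightarrow W$, so $\rank\sG=r^a(\sG)=r-q<r$. Since $K_{\sF/\sG}$ is pseudoeffective (the general leaf of the associated quotient foliation is the non-uniruled base $W$), $-K_\sG=-K_\sF+K_{\sF/\sG}$ is big with $\widehat{\iota}(\sG)\ge t\ge r-1$. Theorem~\ref{thm:ko} applied to $\sG\subsetneq T_X$ gives $r-q=r^a(\sG)\ge\widehat{\iota}(\sG)\ge r-1$, so $q=1$ and $\widehat{\iota}(\sG)=r^a(\sG)=r-1$.

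\emph{The equality case yields a contradiction.} Now the equality case of Theorem~\ref{thm:ko} for $\sG$ gives $X\cong\mathbb{P}^n$, with $\sG$ the linear pullback of a purely transcendental foliation on $\mathbb{P}^{n-r+1}$ of zero canonical class; as $\sG$ is algebraically integrable of rank $r-1$, that foliation has rank zero, so $\sG$ is the relative tangent sheaf of a linear projection $p\colon\mathbb{P}^n\dashrightarrow\mathbb{P}^{n-r+1}$ with centre a linear subspace $\Lambda\cong\mathbb{P}^{r-2}$. Since $\sG\subsetneq\sF$ with $\rank(\sF/\sG)=1$, $\sF$ is the pullback $p^*\overline{\sF}$ of a rank-one foliation $\overline{\sF}$ on $\mathbb{P}^{n-r+1}$, and $\overline{\sF}$ is algebraically integrable because $\sF=\sF^a$ and the relative tangent of $p$ is algebraic. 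On $\mathbb{P}^n$ one has $-K_\sF=\mathcal{O}_{\mathbb{P}^n}(t)$ with $t\in\mathbb{Z}\cap[r-1,r)=\{r-1\}$, and the formula for the canonical class of a linear pullback then forces $-K_{\overline{\sF}}=\mathcal{O}_{\mathbb{P}^{n-r+1}}$, so $\overline{\sF}$ is defined by a global holomorphic vector field. But a global vector field on a projective space is tangent to no invariant curve of positive genus — its local flow extends to an action of a positive-dimensional affine algebraic group, which acts trivially on any curve of positive genus — so the closure $\overline{C}$ of a general leaf of $\overline{\sF}$ is a rational curve. Then $F=p^{-1}(\overline{C})$ is the join of $\Lambda$ with $\overline{C}$, hence unirational, so $\widehat F$ is rationally connected, contradicting $q\ge 1$. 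Therefore $q=0$, which proves the theorem.

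\emph{Main obstacle.} The crux is the construction in the second paragraph: that the relative maximal rationally connected fibrations of the leaves glue to a subfoliation $\sG$ on $X$, and that the canonical class of a foliation with non-uniruled general leaf is pseudoeffective (whence $K_{\sF/\sG}$, and also $K_{\sF/\sF^a}$, is pseudoeffective). Granting these — together with the comparison of canonical classes for leaves used above, all part of the positivity-in-families toolkit of the paper — the rest is a short induction driven by Theorem~\ref{thm:ko}, the only genuinely $\mathbb{P}^n$-specific ingredient being the final step, which rules out positive-genus leaves of a foliation given by a global vector field.
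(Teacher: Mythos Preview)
The paper's proof is a one-line deduction: since $\sF$ is Fano of index $\iota(\sF)$, one has $\widehat\iota(\sF)\ge\iota(\sF)$, so $r^a(\sF)=\iota(\sF)+1\le\widehat\iota(\sF)+1$ and Theorem~\ref{thm:rat_connectedness_leaves} applies directly. You instead reprove (a form of) that theorem from scratch, replacing the paper's Harder--Narasimhan analysis of the positive parts $\sF_\alpha^+$ by the MRC subfoliation $\sG\subset\sF^a$. This is a genuinely different route, and the endgame (forcing $X\cong\mathbb P^n$ via the equality case of Theorem~\ref{thm:ko}, then reading off a vector field on the base) is clean. Two points, however, need repair.

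\textbf{Uniruledness of $\widehat F$.} Your claim that ``$-K_{\widehat F}$ differs from $\nu^*(-K_\sF|_F)$ by an effective divisor'' is not correct in general: the log leaf formula gives $K_F+\Delta_F\sim_{\mathbb Q}K_\sF|_F$ on the \emph{normalization} $F$, but passing to a resolution $\widehat F\to F$ introduces discrepancies of arbitrary sign when $(F,\Delta_F)$ is not log canonical. The right argument is the contrapositive of Lemma~\ref{lemma:pseudo_effective}: since $\widehat\iota(\sF)\ge r-1\ge 1$, one may write $-K_\sF\equiv A+E$ with $A$ ample and $E$ pseudo-effective and arrange that $K_\sF+A$ is not pseudo-effective; then $K_{\widehat F}+\nu_F^*A|_F$ is not pseudo-effective, hence $K_{\widehat F}$ is not pseudo-effective and $\widehat F$ is uniruled.

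\textbf{Pseudo-effectivity of $K_{\sF/\sG}$.} This is the real crux, and it is not one of the statements proved in the paper; you cannot simply grant it as part of the ``positivity-in-families toolkit''. What does work is the following. Choose a smooth projective model $T$ for the relative MRC base, so that the map defining $\sF$ factors as $X\dashrightarrow T\dashrightarrow Y$; let $\sH\subset T_T$ be the foliation by fibres of $T\dashrightarrow Y$, whose general leaf is the non-uniruled variety $W$. Then \eqref{pullback_fol} and \eqref{morphism_fol} give $K_\sF-K_\sG=\psi^*K_\sH+(\text{effective})$. If $K_\sH$ were not pseudo-effective, some movable class $\beta$ on $T$ would satisfy $\mu_\beta(\sH)>0$, and Corollary~\ref{alg_integrability} would produce an algebraically integrable subfoliation $\sH_1\subset\sH$ with rationally connected general leaf \emph{contained in} $W$, contradicting non-uniruledness of $W$. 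This fills the gap, but it is an argument you must supply, not cite. The analogous claim for $K_{\sF/\sF^a}$ is easier and does follow from the paper: the transcendental part $\sG$ on $Y$ has $K_\sG$ pseudo-effective by Theorem~\ref{algebraic_integrability_criterion}, and \eqref{pullback_fol}--\eqref{morphism_fol} again give $K_\sF-K_{\sF^a}=\pi^*K_\sG+(\text{effective})$.

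With these two fixes your argument goes through. Compared to the paper's proof of Theorem~\ref{thm:rat_connectedness_leaves}, your approach trades the delicate case analysis on $\mu_\alpha(\sF_\alpha^+\otimes\sL^*)$ for a single geometric invariant (the MRC subfoliation), at the cost of needing the auxiliary pseudo-effectivity statement above; the paper's approach avoids that statement but must handle the boundary case $(r_\alpha,r_\beta,r^a)=(1,1,3)$ by an ad hoc argument with rationally connected quotients.
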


We remark that codimension $1$ Fano foliations $\sF$ of index $n-3$ on $n$-dimensional projective manifolds were classified in \cite{codim_1_mukai_fols}. 
When $X \not\cong \mathbb{P}^n$, then either $\sF$ is algebraically integrable or $r^a(\sF) = {\iota}(\sF) +1$.

\medskip

\noindent {\bf Notation and conventions.}
We always work over the field ${\mathbb C}$ of complex numbers. 
Varieties are always assumed to be irreducible.
We denote by $X_{\textup{ns}}$ the nonsingular locus of a variety $X$.
When $X$ is a normal variety, and $\sF$ is a quasi-coherent sheaf of generic rank $r$ on $X$, 
we denote by $T_X$ the sheaf $(\Omega_{X}^1)^*$, and by $\det(\sF)$ the sheaf $(\wedge^r\sF)^{**}$. 
If $\sE$ is a vector bundle on a variety $X$, we denote by
$\mathbb{P}_X(\sE)$ the Grothendieck projectivization $\textup{Proj}_X(\textup{S}^\bullet\sE)$.

\medskip

\noindent {\bf Acknowledgements.}
This project was supported by the Brazilian-French Network in Mathematics. 
Much of this work was developed during the authors' visits to IMPA and Institut Fourier.
We would like to thank both institutions for their support and hospitality. 

The authors would also like to thank the anonymous referees for their helpful and
very detailed reports.

The first named author was partially supported by CNPq and Faperj Research  Fellowships.
The second named author was partially supported by the ALKAGE project (ERC grant Nr 670846, 2015$-$2020) and
by the project Foliage of Agence Nationale de la Recherche (ANR-16-CE40-0008-01).

%
%
%
%

\section{Foliations} \label{section:foliations} 

\subsection{Basic notions}
Let $X$ be a normal variety. 

\begin{defn}\label{defn_dist}
A \emph{distribution} on $X$ is a coherent saturated subsheaf $\sF\subset T_X$. By saturated we mean that the quotient $T_X/\sF$ is torsion-free.

The \emph{rank} $r$ of $\sF$ is the generic rank of $\sF$.
The \emph{codimension} of $\sF$ is defined as $q:=\dim X-r$. 

The \emph{normal sheaf} of $\sF$ is the sheaf $\sN_\sF:=(T_X/\sF)^{**}$.

The \textit{canonical class} $K_{\sF}$ of $\sF$ is any Weil divisor on $X$ such that  $\sO_X(-K_{\sF})\cong \det(\sF)$. 
We say that $\sF$ is \emph{$\mathbb{Q}$-Gorenstein} if $K_\sF$ is a $\mathbb{Q}$-Cartier divisor.
\end{defn}

\begin{defn}\label{defn_foliation}
A \emph{foliation} on $X$ is a distribution  $\sF\subset T_X$  that is closed under the Lie bracket.

Let $X^\circ \subset X_{\textup{ns}}$ be the maximal open subset such that $\sF_{|X_{\textup{ns}}}$ is a subbundle of $T_{X_{\textup{ns}}}$. 
By Frobenius' Theorem, through any point of $X^\circ$ there is a maximal connected and immersed holomorphic submanifold $L \subset X^\circ$ such that
$T_L=\sF_{|L}$. We say that such $L$ is a  \emph{leaf} of $\sF$. A leaf is called \emph{algebraic} if it is open in its Zariski closure.
\end{defn}

\begin{say}\label{forms}
To a codimension $q$ distribution $\sF$ on a normal variety $X$, one naturally associates a unique (up to scaling) twisted $q$-form
$\omega_{\sF}\in H^0(X,\Omega^q_X\otimes \det(\sN_\sF))$. This form does not vanish in codimension $1$, and 
completely determines the distribution $\sF$.  (See for instance \cite[Paragraph 3.5]{fano_fols_2} for details.)
\end{say}

\begin{defn}[{The algebraic and transcendental parts of a  foliation  \cite[Definition 2]{codim_1_del_pezzo_fols}}]
\label{dfn:algebraic_part}
Let $\sF$ be a  foliation of rank $r$ on a normal variety $X$.
There exist  a normal variety $Y$, unique up to birational equivalence,  a dominant rational map with connected fibers $\varphi\colon X\map Y$,
and a  foliation $\sG$ on $Y$ such that the following conditions hold (see \cite[Section 2.3]{loray_pereira_touzet}).
\begin{enumerate}
	\item $\sG$ is purely transcendental, i.e., there is no positive-dimensional algebraic subvariety through a general point of $Y$ that is tangent to $\sG$.
	\item $\sF$ is the pull-back of $\sG$ via $\varphi$. This means the following. 
		Let $X^\circ\subset X$ and  $Y^\circ\subset Y$ be smooth open subsets such that $\varphi$ restricts to a smooth morphism 
		$\varphi^\circ\colon X^\circ\to Y^\circ$.
		Then $\sF_{|X^\circ}=(d\varphi^\circ)^{-1}(\sG_{|Y^\circ})$. In this case we write $\sF=\varphi^{-1}\sG$.
\end{enumerate}

The foliation $\sF^{a}$ on $X$ induced by $\varphi$ is called the \emph{algebraic part} of $\sF$, and its rank is the 
\emph{algebraic rank} of $\sF$, which we denote by $r^a$. When $r^a=r$, we say that $\sF$ is \emph{algebraically integrable}.
The foliation $\sG\subset T_Y$ is called the \emph{transcendental part} of $\sF$.
\end{defn}

Next we relate the canonical class of a foliation with those of its algebraic and transcendental parts. 
For that we introduce some notation.

\begin{defn}\label{defn_ramification}
Let $\pi \colon X \to Y$ be a dominant morphism of normal varieties. 

Let $D$ be a Weil $\mathbb{Q}$-divisor on $Y$. 
If $\pi \colon X \to Y$ is equidimensional, we define the pull-back $\pi^*D$ of $D$ to be the unique $\mathbb{Q}$-divisor on $X$ whose restriction to 
$\pi^{-1}(Y_{\textup{ns}})$ is $(\pi_{|\pi^{-1}(Y_{\textup{ns}})})^*D_{|\pi^{-1}(Y_{\textup{ns}})}$. 
This  agrees with the usual pull-back if $D$ is $\mathbb{Q}$-Cartier.

We define the \emph{ramification divisor} $R(\pi)$ of $\pi$ as follows. 
Let $Y^\circ\subset Y$ be a dense open subset such that $\codim(Y\setminus Y^\circ) \ge 2$ and  
$\pi$ restricts to an equidimensional morphism $\pi^\circ\colon X^\circ=\pi^{-1}(Y^\circ)\to Y^\circ$. 
Set 
$$
R(\pi^\circ)=\sum_{D^\circ} \Big((\pi^\circ)^*D^\circ-{\big((\pi^\circ)^*D^\circ\big)}_{\textup{red}}\Big),
$$
where $D^\circ$ runs through all prime divisors on $Y^\circ$. 
Then  $R(\pi)$ is the Zariski closure of $R(\pi^\circ)$ in $X$. 

Assume  that either $K_Y$ is $\mathbb{Q}$-Cartier, or that $\pi$ is equidimensional.
We define the \emph{relative canonical divisor} of $X$ over $Y$ as $K_{X/Y}:=K_X-\pi^*K_Y$. 
\end{defn}

\begin{say}[The canonical class of a pull-back foliation] \label{K_pullback}
Let $\pi\colon X\to Y$ be a dominant morphism with connected fibers between normal varieties, 
$\sG$ a  foliation on $Y$, and  $\sF=\pi^{-1}\sG\subset T_X$ its pull-back via $\pi$, as in Definition \ref{dfn:algebraic_part} (2).
Assume that $\pi$ is equidimensional.
Let  $\{B_i\}_{i\in I}$ be the (possibly empty) set of prime divisors on $Y$ contained in the set of critical values of $\pi$ and invariant by $\sG$. 
A straightforward computation shows that 
\begin{equation}
\label{pullback_fol}
K_{\sF} \ = \ \pi^*{K_{\sG}}+K_{X/Y}-
\sum_{i\in I}\big(\pi^*B_i-(\pi^*B_i)_{\textup{red}}\big).
\end{equation}
In particular, if $\sF$ is induced by $\pi$, then \eqref{pullback_fol} reads
\begin{equation}
\label{morphism_fol}
K_{\sF} \ = \ K_{X/Y}-R(\pi).
\end{equation}
\end{say}

\begin{rem}\label{rem_pullback}
Let $\varphi\colon X\map Y$ be a dominant rational map with connected fibers between normal varieties, and $\sF$  a foliation on $X$.
Suppose that the general fiber of $\varphi$ is tangent to $\sF$. This means that, for a general point $x$ on a general fiber $F$ of $\varphi$,
the linear subspace $\sF_x\subset T_xX$ determined by the inclusion $\sF\subset T_X$ 
contains $T_xF$. Then, by \cite[Lemma 6.7]{fano_fols}, there is a  foliation $\sG$ on $Y$
such that $\sF=\varphi^{-1}\sG$. 
We remark that this is not true in general if $\sF$ is just a distribution.
\end{rem}

\begin{say}[Restricting foliations to subvarieties] \label{restricting_fols}
Let $X$ be a smooth variety, and $\sF$  a codimension $q$ foliation on $X$.
Let $Z$ be a smooth subvariety with $\dim Z \ge q$.
Suppose that $Z$ is generically transverse to $\sF$. This means that the associated twisted $q$-form $\omega_{\sF}\in H^0\big(X,\Omega^{q}_X\otimes \det(\sN_\sF)\big)$ restricts to a nonzero twisted $q$-form on $Z$, and so $\sF$ induces a foliation $\sF_Z$ of codimension $q$ on $Z$.
Then there is an effective divisor $B$ on $Z$ such that $\sF_Z$ corresponds to a  twisted $q$-form in 
$H^0\big(Z,\Omega^{q}_Z\otimes \det(\sN_\sF)_{|Z}(-B)\big)$ non vanishing in codimension $1$.
A straightforward computation shows that 
$$
K_{\sF_Z}+{K_X}_{|Z} \ = \ {K_{\sF}}_{|Z}+K_Z-B.
$$
\end{say}

When $Z$ is a general hyperplane section, we show that $B=0$.

\begin{lemma}\label{bertini}
Let $X \subset \mathbb{P}^N$ be a smooth projective  variety, and $\sF$ a foliation  of codimension $q \le \dim X-2$ on $X$. 
Let $H \subset \mathbb{P}^N$ be a general hyperplane.
Then $\sF$ induces a  foliation $\sF_{X \cap H}$ of codimension $q$
on $X \cap H$ with 
$$
K_{\sF_{X \cap H}}=\big({K_\sF}+H\big)_{|X \cap H}.
$$
\end{lemma}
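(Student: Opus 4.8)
The plan is to apply the general formula from \ref{restricting_fols} with $Z = X \cap H$ and then show that the effective divisor $B$ appearing there vanishes for general $H$. First I would note that by the classical Bertini theorem $X \cap H$ is a smooth subvariety of $X$ of dimension $\dim X - 1 \ge q+1$, so the codimension hypothesis needed in \ref{restricting_fols} is satisfied. The key preliminary point is that a general hyperplane section is generically transverse to $\sF$: the twisted $q$-form $\omega_\sF \in H^0\big(X, \Omega^q_X \otimes \det(\sN_\sF)\big)$ does not vanish identically on $X\cap H$ for general $H$, since $\omega_\sF$ is non-zero and the hyperplanes not containing a fixed general point of $X$ (where $\omega_\sF$ is non-zero on the tangent space) form a dense open set in the dual projective space. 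Hence $\sF$ induces a codimension $q$ foliation $\sF_{X\cap H}$ on $X \cap H$, and \ref{restricting_fols} gives
$$
K_{\sF_{X\cap H}} + {K_X}_{|X\cap H} \ = \ {K_\sF}_{|X\cap H} + K_{X\cap H} - B
$$
for some effective divisor $B$ on $X\cap H$. By adjunction ${K_X}_{|X\cap H} = K_{X\cap H} - {H}_{|X\cap H}$, so after cancellation this reads $K_{\sF_{X\cap H}} = \big(K_\sF + H\big)_{|X\cap H} - B$; thus everything reduces to proving $B=0$.

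To control $B$, recall from \ref{restricting_fols} that $B$ is defined as the codimension-$1$ vanishing locus (with multiplicities) of the restriction of $\omega_\sF$ to $X \cap H$, viewed as a section of $\Omega^q_{X\cap H} \otimes \det(\sN_\sF)_{|X\cap H}$. So I must show that for general $H$ this restricted form vanishes in codimension $\ge 2$ on $X\cap H$. The strategy is a Bertini-type argument applied to the vanishing scheme of $\omega_\sF$. Let $W \subset X$ be the (closed) locus where $\omega_\sF$ vanishes as a section of $\Omega^q_X \otimes \det(\sN_\sF)$; by hypothesis $\codim_X W \ge 2$. For a general hyperplane $H$, the scheme-theoretic intersection $W \cap H$ has codimension $\ge 2$ in $X \cap H$. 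Now the restriction of $\omega_\sF$ to $X \cap H$ is the composition of the pullback $\Omega^q_X{}_{|X\cap H} \to \Omega^q_{X\cap H}$ with the section $\omega_\sF{}_{|X\cap H}$ of $\Omega^q_X \otimes \det(\sN_\sF){}_{|X\cap H}$; this composition can only acquire new zeros along the locus where $\Omega^q_X{}_{|X\cap H} \to \Omega^q_{X\cap H}$ drops rank. That map is surjective (it is the restriction of the surjection $\Omega^q_X \to \Omega^q_{X\cap H}$ coming from $\Omega^1_X \twoheadrightarrow \Omega^1_{X\cap H}$, since $X\cap H$ is a smooth divisor), hence it never drops rank. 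Therefore the zero locus of $\omega_\sF{}_{|X\cap H}$ as a section of $\Omega^q_{X\cap H} \otimes \det(\sN_\sF){}_{|X\cap H}$ is contained in $W \cap H$, which has codimension $\ge 2$; consequently $B = 0$.

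The main obstacle I anticipate is the last step: making precise that restricting the twisted $q$-form to a general hyperplane section introduces no new codimension-$1$ zeros. This requires being careful about which bundle the form is a section of before and after restriction — $\Omega^q_X \otimes \det(\sN_\sF)$ versus $\Omega^q_{X\cap H} \otimes \det(\sN_\sF)_{|X\cap H}$ — and checking that the natural map $\Omega^q_X{}_{|X\cap H} \to \Omega^q_{X\cap H}$ is genuinely surjective of vector bundles (which it is, because $X\cap H$ is smooth and the conormal sequence $0 \to \sO_{X\cap H}(-H) \to \Omega^1_X{}_{|X\cap H} \to \Omega^1_{X\cap H} \to 0$ is exact with locally free terms). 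Once this surjectivity is in hand, the inclusion of zero loci is formal, and the general-position statements ($X\cap H$ smooth, generically transverse, $W\cap H$ of the expected codimension) are all standard Bertini. I would also remark that the hypothesis $q \le \dim X - 2$ is exactly what guarantees $\dim(X\cap H) \ge q+1$, so that $\sF_{X\cap H}$ is a proper (non-zero-dimensional codimension, positive rank) foliation and the formula in \ref{restricting_fols} applies without degeneration.
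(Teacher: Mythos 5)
Your reduction to showing $B=0$ (via the formula of \ref{restricting_fols} and adjunction) is exactly the paper's set-up, and the Bertini statements you invoke for smoothness and generic transversality of $X\cap H$ are fine. The gap is in the final step. You argue that because the bundle map $\Omega^q_X{}_{|X\cap H}\to\Omega^q_{X\cap H}$ is surjective it ``never drops rank'' and hence the composed section acquires no new zeros outside $W\cap H$. This confuses surjectivity with injectivity: for $q\ge 1$ that map has a kernel of positive rank, and the restricted form vanishes at every point $x$ where $\omega_\sF(x)\neq 0$ but $\omega_\sF(x)$ lies in that kernel. Unwinding the definitions, this happens exactly where $\sF_x\subset T_xH$, i.e.\ on the tangency locus of $\sF$ with $H$; indeed, a prime divisor $B_1\subset X\cap H$ lies in $\Supp(B)$ if and only if $\sF$ is tangent to $H$ at a general point of $B_1$. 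For special hyperplanes (still giving a smooth, generically transverse section) this tangency locus can contain a divisor, so the inclusion of the zero locus of $\omega_\sF{}_{|X\cap H}$ into $W\cap H$ is false in general, and your argument would ``prove'' the lemma for every smooth generically transverse section, which is too strong.

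What is missing is a genericity argument specifically for the tangency locus. The paper supplies it with the incidence variety $I^\circ=\{(x,H)\in X^\circ\times(\p^N)^*\mid x\in H\ \text{and}\ \sF_x\subset T_xH\}$: the fiber over $x$ is a linear system of hyperplanes of dimension $N-1-(\dim X-q)$, so $\dim I^\circ=N+q-1\le N+\dim X-3$, and therefore for general $H$ the tangency locus has codimension at least $2$ in $X\cap H$, which forces $B=0$. Note that this is where the hypothesis $q\le\dim X-2$ genuinely enters --- not merely to guarantee $\dim(X\cap H)\ge q+1$, but to make this dimension count close.
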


\begin{proof}
Let $H \subset \mathbb{P}^N$ be a general hyperplane, and set $Z:= X \cap H$. Notice that $Z$ is smooth and that $Z$ is generically transverse to $\sF$. 
Let $B$ be the effective divisor on $Z$ introduced in \ref{restricting_fols}. If $B_1$ is a prime divisor on $Z$, then $B_1 \subset \textup{Supp}(B)$ if and only if $\sF$ is tangent to $H$ at a general point of $B_1$.

Let $X^\circ \subset X$ be the open subset where $\sF$ is a subbundle of $T_X$, and consider the incidence subset
$$
I^\circ=\{(x,H)\in X^\circ \times (\mathbb{P}^N)^*\,|\, x\in H\text{ and }\sF_x \subset T_xH\}.
$$
An easy dimension count gives that $\dim I^\circ = \dim X + N - (\dim X - q) - 1 \le \dim X + N - 3$.
It follows that if $H$ is general, then $B=0$.
\end{proof}


\subsection{Algebraically integrable foliations}

Throughout this subsection, we
let $X$ be a normal projective variety, and $\sF$ a $\mathbb{Q}$-Gorenstein algebraically integrable foliation on $X$.

\begin{defn}[{\cite[Definition 3.11]{codim_1_del_pezzo_fols}}]\label{log_leaf} 
Let $i\colon F\to  X$ be the normalization of the closure of a general leaf of $\sF$. 
There is a  canonically defined effective $\mathbb{Q}$-divisor $\Delta_F$ on $ F$ such that
$K_{ F}  +  \Delta_F \sim_\mathbb{Q} i^*K_{\sF} $. 
The pair $(F,\Delta_F)$ is called a \emph{general log leaf} of $\sF$.  
\end{defn}

In the setup of Definition~\ref{log_leaf}, we often write $L_{|F}$ for the pull-back $i^*L$ of a Cartier divisor $L$ on $X$.

\begin{say}[{The family of log leaves of $\sF$  \cite[Lemma 3.9 and Remark 3.12]{codim_1_del_pezzo_fols}}] \label{family_leaves} 
There is a unique proper subvariety $Y'$  of the Chow variety of $X$ 
whose general point parametrizes the closure of a general leaf of $\sF$
(viewed as a reduced and irreducible cycle in $X$). 
Let $Y$ be the normalization of $Y'$, and  $Z \to Y'\times X$  the normalization of the universal cycle, with induced morphisms:
\begin{equation} \label{eq:family_of_leaves}
\xymatrix{
Z \ar[r]^{\nu}\ar[d]_{\pi} & X \\
 Y. &
}
\end{equation}
Then $\nu\colon Z\to X$ is birational and, for a general point $y\in Y$, 
$\nu\big(\pi^{-1}(y)\big) \subset X$ is the closure of a leaf of $\sF$.
We refer to the diagram \eqref{eq:family_of_leaves} as the \emph{family of leaves} of $\sF$.

Let $\sF_Z$ be the foliation on $Z$ induced by $\sF$ (or $\pi$).
By \eqref{morphism_fol}, $K_{\sF_Z}=K_{Z/Y}-R(\pi)$.
Moreover, there is a canonically defined effective Weil $\bQ$-divisor $\Delta$ on $Z$ such that 
\begin{equation}\label{eq:universal_canonical_bundle_formula}
K_{\sF_Z}+\Delta=K_{Z/Y}-R(\pi)+\Delta \sim_\mathbb{Q} \nu^* K_\sF.
\end{equation}
Note that $\Delta$ is $\nu$-exceptional since $\nu_*K_{\sF_Z}=K_\sF$. 

Let $y\in Y$ be a general point, set $Z_y := \pi^{-1}(y)$ and $\Delta_y:=\Delta_{|Z_y}$.
Then $(Z_y, \Delta_y)$ coincides with the general log leaf $(F,\Delta_F)$ from Definition~\ref{log_leaf}.
\end{say}

We will need the following observation.

\begin{lemma}\label{lemma:dicritical}
Let $X$ be a smooth projective variety, and $\sF$ an algebraically integrable foliation on $X$.
In the setup of Paragraph \ref{family_leaves} above, we have 
$$
\textup{Supp}(\Delta_y) = \textup{Exc}(\nu) \cap Z_y.
$$
In particular, the singular locus of $Z_y$ is contained in $\textup{Supp}(\Delta_y)$.
\end{lemma}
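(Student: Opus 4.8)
The plan is to analyze the divisor $\Delta$ from Paragraph~\ref{family_leaves} directly in terms of the birational morphism $\nu\colon Z\to X$, using the defining relation \eqref{eq:universal_canonical_bundle_formula} together with the formula $K_{\sF_Z}=K_{Z/Y}-R(\pi)$. The key point is that since $X$ is smooth, $\nu\colon Z\to X$ is a birational morphism from a normal variety to a smooth one, and $\nu_*K_{\sF_Z}=K_\sF$; hence $\Delta$ is $\nu$-exceptional and effective. I expect to show the two inclusions $\Supp(\Delta_y)\subseteq \Exc(\nu)\cap Z_y$ and $\Exc(\nu)\cap Z_y\subseteq\Supp(\Delta_y)$ separately.

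For the inclusion $\Supp(\Delta_y)\subseteq \Exc(\nu)\cap Z_y$: since $\Delta$ is $\nu$-exceptional, $\Supp(\Delta)\subseteq\Exc(\nu)$, and restricting to the general fiber $Z_y$ gives $\Supp(\Delta_y)\subseteq\Exc(\nu)\cap Z_y$ — one just needs that for general $y$ no component of $\Supp(\Delta)$ contains $Z_y$, which holds because $Z\to Y$ has $Z_y$ of the expected dimension and $\Delta$ is a proper subvariety's worth of divisors (alternatively, $\Delta$ being $\nu$-exceptional and $\nu$ birational means $\Supp(\Delta)$ maps to a subset of codimension $\ge 2$ in $X$, while $\nu(Z_y)$ is the closure of a leaf, which moves, so no component of $\Delta$ dominates the image and the restriction $\Delta_y=\Delta_{|Z_y}$ makes sense as a divisor on $Z_y$ with support in $\Exc(\nu)\cap Z_y$).

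For the reverse inclusion $\Exc(\nu)\cap Z_y\subseteq\Supp(\Delta_y)$: this is the main obstacle. Let $E$ be a prime divisor on $Z$ that is $\nu$-exceptional and dominates $Y$ (so that $E\cap Z_y$ is a divisor on $Z_y$ for general $y$); I must show $E\subseteq\Supp(\Delta)$, equivalently that the coefficient of $E$ in $\Delta$ is positive. Write out \eqref{eq:universal_canonical_bundle_formula} as $K_{Z/Y}-R(\pi)+\Delta\sim_\mathbb{Q}\nu^*K_\sF$, i.e. $\Delta = \nu^*K_\sF - K_Z + \pi^*K_Y + R(\pi)$. Restricting to a general fiber and using that $K_{Z_y}+\Delta_y\sim_{\mathbb Q} i^*K_\sF$ with $(Z_y,\Delta_y)$ the general log leaf, the claim becomes: the coefficient of $E\cap Z_y$ in $\Delta_y = i^*K_\sF - K_{Z_y}$ is positive whenever $E\cap Z_y\subseteq\Exc(\nu_{|Z_y})$. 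Since $Z_y\to \nu(Z_y)$ is the normalization of the closure of a leaf and $X$ is smooth, I would use a discrepancy/negativity argument: $\nu$ is birational with $\nu_*\mathcal{O}_Z=\mathcal{O}_X$, so by the negativity lemma the $\nu$-exceptional part of $\nu^*K_\sF - K_{\sF_Z}$ is effective and its support is exactly the union of those exceptional divisors over which $K_\sF$ "pulls back with positive discrepancy"; the needed statement is that every $\nu$-exceptional prime divisor occurs, which follows because $K_\sF = \nu_* K_{\sF_Z}$ forces $\nu^*K_\sF - K_{\sF_Z}$ to be a nonzero effective combination of \emph{all} $\nu$-exceptional divisors. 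Concretely, if some $\nu$-exceptional prime $E$ had coefficient zero in $\Delta=\nu^*K_\sF-K_{\sF_Z}$, then contracting $E$ one contradicts either the defining maximality of $\Delta$ in \ref{family_leaves} or the fact that $\Delta$ is canonically associated with $(Z_y,\Delta_y)$ being the log leaf, whose boundary records precisely the failure of $Z_y\to X$ to be an isomorphism. I would carry this out by a local computation near a general point of $E$, using that $\nu$ is an isomorphism over $X\setminus\Exc(\nu)$ and that $Z$ is normal. The final sentence of the lemma is then immediate: since $\nu$ is birational and $X$ is smooth, $Z_y$ is singular only along $\Exc(\nu)\cap Z_y$ (the normalization of the leaf can only be singular where $\nu$ fails to be an isomorphism), hence $\textup{Sing}(Z_y)\subseteq\Exc(\nu)\cap Z_y=\Supp(\Delta_y)$.
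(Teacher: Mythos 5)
The first inclusion $\Supp(\Delta_y)\subseteq\Exc(\nu)\cap Z_y$ is fine and is exactly what the paper does: $\Delta$ is $\nu$-exceptional because $\nu_*K_{\sF_Z}=K_\sF$, and one restricts to the general fiber. The last sentence about $\textup{Sing}(Z_y)$ is also essentially right.

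The reverse inclusion, however, is where the real content lies, and your argument for it has a genuine gap. You assert that ``$K_\sF=\nu_*K_{\sF_Z}$ forces $\nu^*K_\sF-K_{\sF_Z}$ to be a nonzero effective combination of \emph{all} $\nu$-exceptional divisors.'' The push-forward relation only tells you that this difference is $\nu$-exceptional; it says nothing about which exceptional prime divisors occur with strictly positive coefficient (think of crepant divisors in the analogous statement for canonical classes). The negativity lemma gives effectivity, not full support, and there is no ``defining maximality'' of $\Delta$ to contradict --- $\Delta$ is canonically determined by the twisted $q$-form defining $\sF_Z$, not by any extremal property. So the claim that every exceptional prime meeting $Z_y$ appears in $\Delta$ is precisely the statement to be proved, and your argument is circular at this point; the promised ``local computation near a general point of $E$'' is where all the work would have to happen, and it is not supplied. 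The paper's proof instead uses a genuinely geometric input: it first reduces to the case $\rank\sF=1$ by cutting with a general linear section of codimension $r-1$ (Lemma~\ref{bertini}, which controls how the log leaf restricts), where $\sF$ becomes a line bundle by Hartshorne's theorem on rank-one reflexive sheaves on a smooth variety. Then the key observation is that any component $E$ of $\Exc(\nu)$ dominating $Y$ has image $\nu(E)$ contained in the \emph{singular locus of the foliation} (because $\nu(E)$ lies in the closure of infinitely many leaves), and the local analysis of such dicritical singularities, carried out in \cite[Lemme 1.2]{druel04} (see also \cite[Lemma 5.6]{fano_fols}), is what shows $E\cap Z_y\subseteq\Supp(\Delta_y)$. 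This external input, or something replacing it, is missing from your proposal.
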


\begin{proof}
Since $\Delta$ is $\nu$-exceptional and $\textup{Supp}(\Delta_y)=\textup{Supp}(\Delta)\cap Z_y$, we must have
$\textup{Supp}(\Delta_y) \subset \textup{Exc}(\nu) \cap Z_y$.

To prove that $\textup{Exc}(\nu) \cap Z_y \subset \textup{Supp}(\Delta_y)$, we first reduce to the case when 
$\sF$ has rank 1.
Suppose that $\sF$ has rank $r \ge 2$, and consider an embedding $X \subset \mathbb{P}^N$.
Let $L \subset \mathbb{P}^N$ be a general linear subspace of codimension $r-1$.
By Lemma \ref{bertini}, $\sF$ induces a foliation by curves $\sF_{X \cap L}$ on $X \cap L$ with
	\begin{equation}\label{eq:adjunction_fol}
	K_{\sF_{X \cap L}}=\big({K_\sF}+(r-1)H\big)_{|X \cap L} \ ,
	\end{equation}
where $H$ is a hyperplane in $\mathbb{P}^N$. 
Notice that  the closure of a general leaf of 
$\sF_{X \cap L}$ is $\nu(Z_y)\cap L$ for a general point $y \in Y$.
Moreover, the general log leaf of $\sF_{X \cap L}$ is $\big(Z_y\cap \nu^{-1}(L),\Delta_y\cap \nu^{-1}(L)\big)$.
This follows from the definition of general log leaf, \eqref{eq:adjunction_fol}, and the usual adjunction formula. 
Let $E$ be an irreducible component of  $\textup{Exc}(\nu)$ meeting $Z_y$. 
Since the restriction of $\nu$ to $E\cap Z_y$ is finite, we have $\dim \nu(E) \ge r-1$.
In particular, $\nu(E) \cap L \neq \emptyset$. Moreover,
$\textup{Exc}(\nu) \cap Z_y \subset \textup{Supp}(\Delta_y)$
if and only if 
$\textup{Exc}(\nu) \cap Z_y \cap \nu^{-1}(L) \subset \textup{Supp}(\Delta_y) \cap \nu^{-1}(L)$.
Since $\textup{Exc}(\nu)\cap \nu^{-1}(L) = \textup{Exc}\big(\nu_{| \nu^{-1}(L)}\big)$, we may assume that $\sF$ has rank $1$.

When $\sF$ has rank $1$, $\sF$ is a line bundle on $X$ by \cite[Remark 2.3]{fano_fols} and \cite[Proposition 1.9]{hartshorne80}. 
If $E$ is an irreducible component of 
$\textup{Exc}(\nu)$ that dominates $Y$, then $\nu(E)$ is contained in the singular locus of $\sF$.
Apply \cite[Lemme 1.2]{druel04} (see also \cite[Lemma 5.6]{fano_fols}) to conclude 
that $E \cap Z_y \subset \textup{Supp}(\Delta_y)$, completing the proof of the lemma.
\end{proof}

\begin{cor}\label{delta_neq_0}
Let $X$ be a smooth projective variety, and $\sF\subsetneq T_X$ an algebraically integrable foliation on $X$, 
with general log leaf  $(F,\Delta_F)$. 
Suppose that either $\rho(X)=1$, or $\sF$ is a Fano foliation. 
Then $\Delta_F \neq 0$. 
\end{cor}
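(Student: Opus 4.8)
The plan is to assume $\Delta_F=0$ and to derive a contradiction in each of the two cases, the cases sharing a common first step. Keep the notation of Paragraph~\ref{family_leaves}: the family of leaves $Z\xrightarrow{\nu}X$, $Z\xrightarrow{\pi}Y$, together with the $\nu$-exceptional effective $\mathbb{Q}$-divisor $\Delta$ satisfying $K_{\sF_Z}+\Delta\sim_{\mathbb{Q}}\nu^*K_\sF$ (see \eqref{eq:universal_canonical_bundle_formula}) and $(Z_y,\Delta_y)\cong(F,\Delta_F)$ for general $y\in Y$. If $\Delta_F=0$ then $\Delta_y=0$, so Lemma~\ref{lemma:dicritical} gives $\Exc(\nu)\cap Z_y=\emptyset$ for general $y$. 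First I would record the consequences of this. The fibre $Z_y$ is smooth (its singular locus lies in $\Supp(\Delta_y)=\emptyset$), and $\nu$ is an isomorphism over an open subset $U\subseteq X$ with $\codim(X\setminus U)\ge 2$, since $\nu^{-1}\colon X\map Z$ is a rational map from a smooth variety to a proper variety and $\varphi:=\pi\circ\nu^{-1}$ is a morphism wherever $\nu^{-1}$ is. Since $Z_y\subseteq\nu^{-1}(U)$ for general $y$, the subvariety $\overline{L}_y:=\nu(Z_y)$ is a smooth closed subvariety of $X$ contained in $U$, and for general $y\ne y'$ the subvarieties $\overline{L}_y$ and $\overline{L}_{y'}$ are disjoint; moreover $\Exc(\nu)\subseteq\pi^{-1}(W)$ for some proper closed subset $W\subsetneq Y$, the rational map $\varphi\colon X\map Y$ is a morphism on $U=X\setminus\nu(\Exc(\nu))$, hence defined outside codimension $2$, and $\varphi^{-1}(y)=\overline{L}_y$ for general $y$. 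In short, $\Delta_F=0$ forces the closures of the general leaves to be pairwise disjoint smooth subvarieties sweeping out a dense open subset of $X$.

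Now consider the case $\rho(X)=1$. As $\sF\subsetneq T_X$, we have $\dim Y=\codim(\sF)\ge 1$; fix a very ample divisor $H$ on the normal projective variety $Y$ and let $D$ be the closure in $X$ of the effective Cartier divisor $\varphi^*H$ on $U$ (a well-defined effective divisor on $X$, since $X$ is smooth and $\codim(X\setminus U)\ge 2$). Then $D\ne 0$, because $\varphi$ is dominant and $H\ne 0$, whereas $D|_{\overline{L}_y}\equiv 0$ for general $y$, because $\overline{L}_y=\varphi^{-1}(y)$ and $y\notin\Supp(H)$. Since $\rho(X)=1$ and $D\ne 0$, either $D$ or $-D$ is ample, so $D|_{\overline{L}_y}$ is numerically non-trivial — recall $\overline{L}_y$ has positive dimension $\rank(\sF)\ge 1$ — contradicting $D|_{\overline{L}_y}\equiv 0$. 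This proves the corollary when $\rho(X)=1$.

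Finally, suppose that $\sF$ is a Fano foliation. The first step above shows that $\Delta_F=0$ would exhibit $\sF$ as an algebraically integrable Fano foliation whose general log leaf has zero boundary, which is ruled out by the study of algebraically integrable Fano foliations in \cite{fano_fols}. One can also argue directly, in the spirit of the case $\rho(X)=1$: with $\Delta=0$ one has $K_{\sF_Z}=\nu^*K_\sF$, so by \eqref{morphism_fol} the $\mathbb{Q}$-divisor $-K_{Z/Y}+R(\pi)=\nu^*(-K_\sF)$ is big and nef on $Z$, yet trivial on every $\nu$-exceptional curve; since $R(\pi)$ is $\pi$-vertical and the $\overline{L}_y$ are disjoint, this is incompatible with the ampleness of $-K_\sF$. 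I expect this Fano case to be the main obstacle: whereas for $\rho(X)=1$ it is enough to produce a single non-trivial divisor class that vanishes along the leaves, excluding a fibration-type structure for a Fano foliation genuinely requires either the relative-canonical bookkeeping indicated above or the structural results on Fano foliations of \cite{fano_fols}.
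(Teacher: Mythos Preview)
Your common first step and the case $\rho(X)=1$ are correct and match the paper; you have simply made explicit the divisor-class contradiction that the paper compresses to ``this is impossible if $\rho(X)=1$''.

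The Fano case has a genuine gap. Your first sentence --- that $\Delta_F=0$ ``is ruled out by the study of algebraically integrable Fano foliations in \cite{fano_fols}'' --- is not a proof: no result there directly forbids $\Delta_F=0$. Your alternative direct argument then asserts $\Delta=0$ globally, but $\Delta_y=0$ for general $y$ only shows that $\Delta$ has no $\pi$-horizontal component; it may still have $\pi$-vertical components over special points of $Y$. Even granting $\Delta=0$, the concluding clause (``this is incompatible with the ampleness of $-K_\sF$'') is not an argument --- you never say what the incompatibility is. The paper's mechanism is precisely the bridge you are missing: since $F$ is smooth (Lemma~\ref{lemma:dicritical}) and $\Delta_F=0$, the pair $(F,\Delta_F)$ is log canonical, and then \cite[Proposition~5.3]{fano_fols} and \cite[Proposition~3.13]{fano_fols_2} say that an algebraically integrable Fano foliation with log canonical general log leaf has a \emph{common point} in the closure of a general leaf. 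This contradicts your own first step, which shows that the general leaf closures $\overline{L}_y\subset U$ are pairwise disjoint fibres of $\varphi|_U$. You had the right setup --- the disjointness of the $\overline{L}_y$ is exactly what is needed --- but did not identify the ``common point'' property as the relevant input from \cite{fano_fols}.
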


\begin{proof}
Let the notation be as in  Paragraph \ref{family_leaves}, and suppose that $\Delta_F = 0$. 
It follows from Lemma~\ref{restricting_fols} that no irreducible component of $\textup{Exc}(\nu)$ dominates $Y$. 
Hence, $\sF$ is induced by a rational map $X\map Y$  that restricts to a smooth proper morphism on a dense open subset of $X$.
This is impossible if $\rho(X)=1$.
Moreover, $F$ is smooth, and in particular log canonical. 
On the other hand, an algebraically integrable Fano foliation whose general log leaf is 
log canonical has the special property that there is a common point in the closure of a general leaf
(see \cite[Proposition 5.3]{fano_fols} and \cite[Proposition 3.13]{fano_fols_2}). 
We conclude that $\Delta_F \neq 0$. 
\end{proof}

We end this subsection with a consequence of \cite[Lemma 2.14]{hoering} 
(see also \cite{campana_paun}, \cite{campana_paun15} and \cite{druel15} for related results). 
Recall that a Weil $\mathbb{Q}$-divisor $D$ on a normal projective variety $X$ is said to be \textit{pseudo-effective} if, for any ample divisor $L$ on $X$ and any rational number 
$\varepsilon > 0$, there exists an effective Weil $\mathbb{Q}$-divisor $E$ such that
$D + \varepsilon L \sim_\mathbb{Q} E$.

\begin{lemma}\label{lemma:pseudo_effective}
Let $X$ be a normal projective variety, $L$ an ample divisor on $X$,
and  $\sF$ a $\mathbb{Q}$-Gorenstein algebraically integrable 
foliation on $X$. Let $F$ be the normalization of the closure of a general leaf of $\sF$, and let $\nu_F\colon \widehat{F}\to F$ be a resolution of singularities. If $K_{\widehat{F}}+\nu_F^*L_{|F}$ is pseudo-effective, then 
so is $K_\sF+L$.
\end{lemma}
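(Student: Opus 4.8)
The plan is to reduce the statement on $X$ to one about the family of log leaves from Paragraph~\ref{family_leaves}, and then to push forward. Let $\nu\colon Z\to X$ and $\pi\colon Z\to Y$ be as in \eqref{eq:family_of_leaves}, with the effective $\nu$-exceptional $\mathbb{Q}$-divisor $\Delta$ satisfying $K_{\sF_Z}+\Delta\sim_\mathbb{Q}\nu^*K_\sF$, and for general $y\in Y$ the log leaf $(Z_y,\Delta_y)$ equals $(F,\Delta_F)$. After shrinking $Y$ and replacing $Z$ by a resolution (compatibly, a resolution whose restriction to a general fibre is a resolution $\nu_F\colon\widehat F\to F$), we may assume $Z$ is smooth, $\pi$ is equidimensional, and $Z_y=\widehat F$. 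The key point is that $K_{\sF_Z}+\nu^*L$ restricts on the general fibre $Z_y$ to $K_{Z_y}+\nu_F^*L_{|F}$ plus the ramification/log corrections that are controlled by the formula $K_{\sF_Z}=K_{Z/Y}-R(\pi)$; so our hypothesis says precisely that $(K_{\sF_Z}+\nu^*L)_{|Z_y}$ is pseudo-effective on the general fibre of $\pi$.

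The main tool is \cite[Lemma 2.14]{hoering}, which says roughly: if $\pi\colon Z\to Y$ is a projective morphism with $Z$ smooth, $M$ a $\mathbb{Q}$-divisor on $Z$ whose restriction to a general fibre is pseudo-effective, and if $M$ is relatively suitably positioned (e.g.\ $\pi$-pseudo-effective, or with the obstruction to pseudo-effectivity supported on non-dominating divisors), then $M$ itself is pseudo-effective on $Z$, up to adding a controlled effective vertical correction. Concretely I would apply it to $M=K_{\sF_Z}+\nu^*L$: its restriction to a general $Z_y$ is $K_{Z_y}+\nu_F^*L_{|F}$ which is pseudo-effective by assumption, and any failure of pseudo-effectivity of $M$ can only come from $\pi$-vertical divisors. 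Thus $K_{\sF_Z}+\nu^*L$ is pseudo-effective on $Z$, possibly after subtracting an effective vertical divisor $V$; but since $\Delta$ is effective and $\nu$-exceptional, and $K_{\sF_Z}+\Delta\sim_\mathbb{Q}\nu^*K_\sF$, we can absorb any such correction: $\nu^*(K_\sF+L)=K_{\sF_Z}+\Delta+\nu^*L$ is then pseudo-effective on $Z$.

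Finally I would push forward along $\nu$. Since $\nu\colon Z\to X$ is birational and $X$ is normal, pseudo-effectivity descends: for any ample $A$ on $X$ and $\varepsilon>0$, $\nu^*(K_\sF+L+\varepsilon A)$ is pseudo-effective, hence big or a limit of effective classes on $Z$, and $\nu_*$ of an effective $\mathbb{Q}$-divisor is effective, so $K_\sF+L+\varepsilon A\sim_\mathbb{Q}\nu_*(\text{effective})+(\text{correction from }\Delta)$; letting $\varepsilon\to0$ gives that $K_\sF+L$ is pseudo-effective. One must be slightly careful that $\nu_*$ sends $\nu$-exceptional effective divisors to zero and genuine effective divisors to effective ones, and that the definition of pseudo-effective via $D+\varepsilon L'\sim_\mathbb{Q} E$ is exactly what $\nu_*$ preserves; this is routine.

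The step I expect to be the main obstacle is the application of \cite[Lemma 2.14]{hoering}: one must verify its precise hypotheses for $M=K_{\sF_Z}+\nu^*L$ — in particular that the only obstruction to global pseudo-effectivity is concentrated on $\pi$-vertical (indeed $\nu$-exceptional) divisors, which in turn uses Lemma~\ref{lemma:dicritical} identifying $\mathrm{Supp}(\Delta_y)=\mathrm{Exc}(\nu)\cap Z_y$ to control the divisorial part of $\mathrm{Exc}(\nu)$ relative to $\pi$ — together with the bookkeeping that matches $(K_{\sF_Z}+\nu^*L)_{|Z_y}$ with $K_{\widehat F}+\nu_F^*L_{|F}$ via $K_{\sF_Z}=K_{Z/Y}-R(\pi)$ and the adjunction in \ref{restricting_fols}. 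Once the restriction to the general fibre is correctly identified and the vertical corrections are seen to be $\nu$-exceptional (hence harmless after $\nu_*$), the rest is formal.
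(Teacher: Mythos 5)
Your skeleton (pass to the family of leaves, invoke \cite[Lemma 2.14]{hoering} over $Y$, then push forward along the birational map $\nu$ using that $\Delta$ is effective and $\nu$-exceptional) is the same as the paper's, but there is a genuine gap at the central step. H\"oring's Lemma 2.14 is a weak-positivity statement about the \emph{relative canonical divisor}: it gives pseudo-effectivity of $K_{Z/Y}+\nu^*L$ from pseudo-effectivity of $K_{Z_y}+L_{|Z_y}$ on the general fibre. What you need is pseudo-effectivity of $K_{\sF_Z}+\nu^*L=K_{Z/Y}+\nu^*L-R(\pi)$, and the ramification divisor $R(\pi)$ is \emph{effective}, so it enters with the wrong sign: subtracting it from a pseudo-effective class is not harmless, and it cannot be ``absorbed'' by adding the effective divisor $\Delta$. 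Nor is $R(\pi)$ exceptional for $\nu$ in general --- its components are closures of leaves occurring with multiplicity in fibres of $\pi$, which typically map to divisors on $X$ --- so after pushing forward your argument only yields that $K_\sF+L+\nu_*R(\pi)$ is pseudo-effective, which is strictly weaker than the claim. The assertion that ``any failure of pseudo-effectivity of $M$ is concentrated on vertical divisors'' is not part of H\"oring's lemma and is not justified.

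The missing idea is the reduction step the paper performs before applying H\"oring's lemma: by \cite[Lemma 4.2]{druel15} there is a finite surjective base change $\mu_1\colon Y_1\to Y$ such that the normalized fibre product $\psi_1\colon Z_1\to Y_1$ has reduced fibres over codimension-one points of $Y_1$; this gives the identity $\nu_1^*\big(K_{Z/Y}-R(\psi)\big)=K_{Z_1/Y_1}$, i.e.\ the troublesome $-R(\psi)$ is absorbed into the relative canonical divisor after the cover. One then resolves $Y_1$ and $Z_1$, applies \cite[Lemma 2.14]{hoering} to $K_{Z_2/Y_2}+L_2$ (whose restriction to the general fibre is $K_{F_2}+L_{2|F_2}$, pseudo-effective because the resolution can be chosen to factor through $\widehat F$), and pushes forward; the finiteness of $\nu_1$ produces $\deg(\nu_1)(K_\sF+L)$, which is pseudo-effective. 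Without this base change your argument does not close.
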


\begin{proof}
Consider the family of leaves of $\sF$ as in Paragraph~\ref{family_leaves}:
$$
\xymatrix{
Z \ar[r]^{\nu}\ar[d]_{\psi} & X \\
 Y. &
}
$$
By \cite[Lemma 4.2]{druel15},
there exists a finite surjective morphism
$\mu_1\colon Y_1 \to Y$ with $Y_1$ normal and connected satisfying the following property. If $Z_1$ denotes the normalization of the product $Y_1 \times_Y Z$, then the induced morphism $\psi_1\colon Z_1 \to Y_1$ has reduced fibers over codimension one points in $Y_1$. Let $\nu_1\colon Z_1 \to Z$ be the natural morphism.
Let $\mu_2\colon Y_2\to Y_1$ be a resolution of singularities, and 
$Z_2$ a resolution of singularities of the product $Y_2 \times_{Y_1} Z_1$, with natural morphism $\nu_2\colon Z_2 \to Z_1$.
We have a commutative diagram:

\centerline{
\xymatrix{
Z_2 \ar[rr]^{\nu_2,\text{ birational}}\ar[d]^{\psi_2} && 
Z_1 \ar[rr]^{\nu_1,\text{ finite}}\ar[d]^{\psi_1} && Z \ar[d]^{\psi}\ar[rr]^{\nu,\text{ birational}} && X \\
Y_2 \ar[rr]_{\mu_2,\text{ birational}} && Y_1 \ar[rr]_{\mu_1,\text{ finite}} && Y. && \\
}
}
\noindent Let $F_1\cong F$ be a general fiber of $\psi_1$, and set $L_1:=(\nu\circ\nu_1)^*L$. 
Let $F_2$ be the fiber of $\psi_2\colon Z_2 \to Y_2$ mapping to $F_1$, and set $L_2:=\nu_2^*L_1$.

By Paragraph~\ref{family_leaves},
there is a canonically defined $\nu$-exceptional effective $\mathbb{Q}$-Weil divisor $\Delta$ on $Z$ such that 
\begin{equation}\label{eq:eq1}
K_{Z/Y}-R(\psi)+\Delta\sim_\bQ \nu^* K_\sF,
\end{equation}
where $R(\psi)$ denotes the ramification divisor of $\psi$. Moreover,
a straightforward computation shows that 
\begin{equation}\label{eq:eq2}
\nu_1^*\big(K_{Z/Y}-R(\psi)\big)=K_{Z_1/Y_1}.
\end{equation}

Suppose that $K_{\widehat{F}}+\nu_F^*L_{|F}$ is pseudo-effective.
We may assume without loss of generality that the restriction of
$\nu_2\colon Z_2\to Z_1$ to $F_2$
factors through $\widehat{F}\to F\cong F_1$. This implies that
$K_{F_2}+{L_2}_{|F_2}$ is pseudo-effective as well. 
By \cite[Lemma 2.14]{hoering},  
$K_{Z_2/Y_2}+L_2$ is also pseudo-effective.
This implies that so is $(\nu\circ\nu_1\circ\nu_2)_*(K_{Z_2/Y_2}+L_2)$. 
Set $Z_1^\circ:=Z_1\setminus \nu_2\big(\textup{Exc}(\nu_2)\big)$
and $Y_1^\circ:=Y_1\setminus \mu_2\big(\textup{Exc}(\mu_2)\big)$.
Since $\psi_1$ is equidimensional, we have $\psi_1(Z_1^\circ)\subset Y_1^\circ$.

Notice that $\codim \, Y_1 \setminus Y_1^\circ \ge 2$,  $\codim \, Z_1 \setminus Z_1^\circ \ge 2$, and 
that $\nu_2$ (respectively, $\mu_2$) induces an isomorphism
$\nu_2^{-1}(Z_1 \setminus Z_1^\circ) \cong Z_1 \setminus Z_1^\circ$
(respectively, $\mu_2^{-1}(Y_1 \setminus Y_1^\circ) \cong Y_1 \setminus Y_1^\circ$).
It follows that $(\nu_2)_*(K_{Z_2/Y_2}+L_2)=K_{Z_1/Y_1}+L_1$.
From \eqref{eq:eq1} and \eqref{eq:eq2}, we conclude that 
$$
(\nu\circ\nu_1\circ\nu_2)_*(K_{Z_2/Y_2}+L_2)=\deg(\nu_1)(K_\sF+L)
$$
is pseudo-effective, completing the proof of the lemma.
\end{proof}


\subsection{Foliations defined by stability conditions}\label{movable_classes}

Let $X$ be a normal projective variety, and $\sF\subset T_X$ a foliation on $X$.
Harder-Narasimhan filtrations of $\sF$ allow one to construct subfoliations of  $\sF$ that  inherit some of the positivity properties of 
$\sF$
(see for instance \cite[Section 7]{fano_fols}).
However, the  classical notion of slope-stability with respect to  an ample line bundle is not flexible enough to 
be applied in many situations in  birational geometry.
The papers \cite{campana_peternell11} and \cite{gkp_movable} extend a number of known results from the classical case
to the setting where stability conditions are given by movable curve classes.

\begin{defn}
A curve class $\alpha\in  \N_1(X)_\mathbb{R}$ is  \emph{movable} if 
$D\cdot \alpha \ge 0$ for all effective Cartier divisors $D$ on $X$.
The set of movable classes is a closed convex cone $\Mov(X)\subset \N_1(X)_\mathbb{R}$, called the
\emph{movable cone} of $X$.
If $X$ is smooth, then $\Mov(X)$ is the closure of the convex cone in $\N_1(X)_\mathbb{R}$
generated by classes of curves whose deformations cover a dense subset of $X$ by \cite{bdpp}.
\end{defn}

\begin{say}[The Harder-Narasimhan filtration with respect to a movable curve class]\label{HNF}
Let $X$ be a normal, $\mathbb{Q}$-factorial, projective variety, $\alpha\in  \N_1(X)_\mathbb{R}$ a movable curve class, and 
$\sF$ a torsion-free sheaf of positive rank on $X$.

The \emph{slope of $\sF$ with respect to $\alpha$} is the real number
	$$
	\mu_{\alpha}(\sF)=\frac{\det(\sF)\cdot \alpha}{\rank(\sF)}.
	$$  

The sheaf  $\sF$ is \emph{$\alpha$-semistable} if, for any  
subsheaf $\sE\neq 0$ of $\sF$, one has $\mu_{\alpha}(\sE)\le\mu_{\alpha}(\sF)$.

The maximal and minimal slopes of $\sF$ with respect to $\alpha$ are defined by 
	\begin{align*}
	&\mu_\alpha^{\max}(\sF):=\sup \big\{\mu_\alpha(\sE)\,|\,0\neq \sE \subseteq \sF\text{ is a coherent subsheaf}\big\},  \\
	&\mu_\alpha^{\min}(\sF):= \inf \big\{\mu_\alpha(\cQ)\,|\, \cQ\neq 0  \text{ is a  torsion-free quotient of }  \sF \big\}. 
	\end{align*}

By \cite[Corollary 2.26]{gkp_movable}, there exists a unique filtration
of $\sF$ by saturated subsheaves
	$$
	0=\sF_0\subsetneq \sF_1\subsetneq \cdots\subsetneq \sF_k=\sF,
	$$
with $\alpha$-semistable quotients $\cQ_i=\sF_i/\sF_{i-1}$ 
such that $\mu_{\alpha}(\cQ_1) > \mu_{\alpha}(\cQ_2) > \cdots > \mu_{\alpha}(\cQ_k)$.
This filtration is called the \emph{Harder-Narasimhan filtration} of $\sF$. 

Using the basic properties of slopes and the Harder-Narasimhan filtration, one can check  that, for $1\le i\le k$,
\begin{equation}\label{mu_min_Fi}
\mu_\alpha^{\min}(\sF_i) \ = \ \mu_{\alpha}(\cQ_i) \ = \ \mu_\alpha^{\max}(\sF/\sF_{i-1}).
\end{equation}

The sheaf $\sF_1$ is called the \emph{maximal destabilizing subsheaf} of $\sF$. 

Suppose that $\mu_\alpha^{\max}(\sF)>0$, and set $s:=\max\{1\le i\le k\,|\,\mu_{\alpha}(\cQ_i)>0\}\ge 1$.
The \emph{positive part of $\sF$ with respect to $\alpha$} is the sheaf $\sF_\alpha^+:=\sF_{s}$.
\end{say}

The following is a useful criterion for a subsheaf of a foliation to be a  foliation. We include a proof for the reader's convenience. 

\begin{lemma}[{\cite[Lemma 4.12]{campana_paun15}}]\label{CP15_lemma_4.9}
Let $X$ be a normal, $\mathbb{Q}$-factorial, projective variety and  $\sF\subset T_X$ a foliation on $X$.
Let $\sG\subset \sF$ be a saturated subsheaf and suppose that, for some movable curve class $\alpha\in  \N_1(X)_\mathbb{R}$,
$$
2 \mu_\alpha^{\min}(\sG) > \mu_\alpha^{\max}(\sF/\sG).
$$
Then $\sG$ is also a foliation on $X$.
\end{lemma}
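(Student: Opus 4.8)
The plan is to encode the obstruction to $\sG$ being closed under the Lie bracket as an $\cO_X$-linear map from a tensor power of $\sG$ into $\sF/\sG$, and then to show that this map must vanish by comparing slopes with respect to $\alpha$. This is the movable-curve-class version of the classical Bogomolov-type integrability criterion.

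First I would work over the dense open subset $U\subseteq X_{\textup{ns}}$, with $\codim(X\setminus U)\ge 2$, on which both $\sG$ and $\sF$ are subbundles of $T_X$. Since $\sF$ is a foliation, for local sections $u,v$ of $\sG$ (hence of $\sF$) the bracket $[u,v]$ is a local section of $\sF$, so one obtains a map $\psi\colon \sG\otimes_{\cO_U}\sG\to(\sF/\sG)_{|U}$, $u\otimes v\mapsto\overline{[u,v]}$. The Leibniz identities $[fu,v]=f[u,v]-v(f)\,u$ and $[u,fv]=f[u,v]+u(f)\,v$, combined with $u,v\in\sG$ (so that the correction terms $v(f)\,u$ and $u(f)\,v$ lie in $\sG$ and are killed in the quotient), show that $\psi$ is $\cO_U$-bilinear. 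Hence $\psi$ corresponds to an $\cO_U$-linear morphism $\sG^{\otimes 2}\to\sF/\sG$; as $\sF/\sG$ is torsion-free and $X\setminus U$ has codimension at least $2$, this extends uniquely to an $\cO_X$-linear morphism $\phi\colon\sG^{\otimes 2}\to\sF/\sG$. By construction, $\sG$ is a foliation precisely when $\phi=0$, so it suffices to rule out $\phi\ne 0$.

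Then I would argue by contradiction: suppose $\phi\ne 0$ and let $\cQ\subseteq\sF/\sG$ be its image, a nonzero torsion-free sheaf of positive rank. Being a subsheaf of $\sF/\sG$ gives $\mu_\alpha^{\max}(\cQ)\le\mu_\alpha^{\max}(\sF/\sG)$, while being a torsion-free quotient of $\sG^{\otimes 2}$ gives $\mu_\alpha^{\min}(\cQ)\ge\mu_\alpha^{\min}(\sG^{\otimes 2})$; together with the trivial inequality $\mu_\alpha^{\min}(\cQ)\le\mu_\alpha^{\max}(\cQ)$ this yields $\mu_\alpha^{\min}(\sG^{\otimes 2})\le\mu_\alpha^{\max}(\sF/\sG)$. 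In characteristic zero the tensor product of two $\alpha$-semistable sheaves is again $\alpha$-semistable and the Harder--Narasimhan filtration is compatible with tensor products (\cite{gkp_movable}, \cite{campana_peternell11}), so $\mu_\alpha^{\min}(\sG^{\otimes 2})=2\,\mu_\alpha^{\min}(\sG)$. We obtain $2\,\mu_\alpha^{\min}(\sG)\le\mu_\alpha^{\max}(\sF/\sG)$, contradicting the hypothesis; hence $\phi=0$ and $\sG$ is a foliation.

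The main obstacle is the first step: verifying carefully that the bracket induces a genuine $\cO_X$-linear morphism between these (in general non-locally-free) sheaves — that is, the bilinearity check and the extension across the codimension-two locus — together with making precise that it is the hypothesis that $\sF$ (not merely $\sG$) is a foliation which allows the target to be $\sF/\sG$ rather than the larger $T_X/\sG$. Once this is set up, the remainder is a formal manipulation of $\mu_\alpha^{\min}$ and $\mu_\alpha^{\max}$ under passage to sub- and quotient sheaves and to tensor products, using only the properties recalled in \ref{HNF}. One could equivalently factor $\phi$ through $\wedge^2\sG$ using antisymmetry of the bracket and invoke $\mu_\alpha^{\min}(\wedge^2\sG)\ge 2\,\mu_\alpha^{\min}(\sG)$ (since $\wedge^2\sG$ is a direct summand of $\sG^{\otimes 2}$ in characteristic zero), but this refinement is not needed for the stated bound.
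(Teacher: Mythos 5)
Your proof is correct and follows essentially the same route as the paper's: both reduce integrability of $\sG$ to the vanishing of the $\cO_X$-linear map induced by the Lie bracket with target $\sF/\sG$, and kill that map by combining the hypothesis with $\mu_\alpha^{\min}$ of a tensor power of $\sG$ being $2\mu_\alpha^{\min}(\sG)$, via \cite[Theorem 4.2]{gkp_movable}. The only cosmetic difference is that the paper factors through $\wedge^2\sG/Tors$ (as you note one may), whereas you work with $\sG^{\otimes 2}$ and argue by contradiction through the image sheaf.
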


\begin{proof}Note that $\sG$ is saturated in $T_X$.
Integrability of $\sG$ is equivalent to the vanishing of the map $\wedge^2\sG \ \to \ \sF/\sG$ induced by the Lie bracket.
This vanishing follows from the inequality
$$
\mu_\alpha^{\min}\big(\wedge^2\sG/Tors\big) > \mu_\alpha^{\max}(\sF/\sG).
$$
So it is enough to prove that
\begin{equation}\label{eq:mu_min_wedge}
\mu_\alpha^{\min}\big(\wedge^2\sG/Tors\big)\ge 2 \mu_\alpha^{\min}(\sG).
\end{equation}

Observe $\big(\wedge^2\sG/Tors\big)^{**}\cong (\wedge^2\sG)^{**}$ is a direct summand of 
$(\sG\otimes\sG)^{**}$. Therefore, we have 
\begin{equation*}
\begin{aligned}
\mu_\alpha^{\min}\big(\wedge^2\sG/Tors\big) \ \ &  = \ \ \mu_\alpha^{\min}\big((\wedge^2\sG)^{**}\big) & \\ 
& \ge \ \  \mu_\alpha^{\min}\big((\sG\otimes\sG)^{**}\big) &  \\ 
& = \ \  2\mu_\alpha^{\min}(\sG)  & (\text{by  \cite[Theorem 4.2]{gkp_movable}),} \\ 
\end{aligned} 
\end{equation*}
proving \eqref{eq:mu_min_wedge}.
\end{proof}

\begin{cor}\label{integrability}
In the setup of Paragraph~\ref{HNF}, suppose that $\sF\subset T_X$ is a foliation on $X$ with $\mu_\alpha^{\max}(\sF)\ge 0$.
Then $\sF_i\subset T_X$ is also a foliation on $X$ whenever $\mu_{\alpha}(\cQ_i)\ge 0$. 
\end{cor}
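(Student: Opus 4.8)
The plan is to apply the integrability criterion of Lemma~\ref{CP15_lemma_4.9} to the subsheaf $\sG := \sF_i$ of $\sF$. If $i = k$ there is nothing to prove, since $\sF_k = \sF$ is a foliation by hypothesis; so assume $i < k$. First one checks that $\sF_i$ is saturated not just in $\sF$ but in $T_X$: this is automatic because $\sF$ is saturated in $T_X$ and an extension of a torsion-free sheaf by a torsion-free sheaf is torsion-free, applied to $0 \to \sF/\sF_i \to T_X/\sF_i \to T_X/\sF \to 0$.

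Next, feed the Harder-Narasimhan data into the criterion. By the identities in \eqref{mu_min_Fi} we have $\mu_\alpha^{\min}(\sF_i) = \mu_\alpha(\cQ_i)$ and $\mu_\alpha^{\max}(\sF/\sF_i) = \mu_\alpha(\cQ_{i+1})$. The defining property of the Harder-Narasimhan filtration gives the strict inequality $\mu_\alpha(\cQ_i) > \mu_\alpha(\cQ_{i+1})$, and the hypothesis $\mu_\alpha(\cQ_i) \ge 0$ is precisely what is needed to double it:
$$
2\,\mu_\alpha^{\min}(\sF_i) \ = \ 2\,\mu_\alpha(\cQ_i) \ \ge \ \mu_\alpha(\cQ_i) \ > \ \mu_\alpha(\cQ_{i+1}) \ = \ \mu_\alpha^{\max}(\sF/\sF_i).
$$
Hence $2\mu_\alpha^{\min}(\sF_i) > \mu_\alpha^{\max}(\sF/\sF_i)$, and Lemma~\ref{CP15_lemma_4.9} shows that $\sF_i$ is a foliation on $X$. (The global hypothesis $\mu_\alpha^{\max}(\sF) = \mu_\alpha(\cQ_1) \ge 0$ only serves to guarantee that the set of indices $i$ with $\mu_\alpha(\cQ_i) \ge 0$ is nonempty, so that the statement is not vacuous.)

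There is no real obstacle here: the entire content is the elementary observation that nonnegativity of the slope of the graded piece $\cQ_i$ upgrades the strict slope drop across the filtration into the factor-of-two gap demanded by the bracket-vanishing argument of Lemma~\ref{CP15_lemma_4.9}. The only thing to be careful about is the index bookkeeping in \eqref{mu_min_Fi}, namely that $\mu_\alpha^{\max}(\sF/\sF_i)$ is computed from the \emph{next} graded piece $\cQ_{i+1}$, which requires $i<k$ and is why the case $i=k$ is separated out at the start.
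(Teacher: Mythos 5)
Your proof is correct and is exactly the argument the paper intends: the corollary is stated as an immediate consequence of Lemma~\ref{CP15_lemma_4.9} via the identities \eqref{mu_min_Fi} and the strict decrease of slopes along the Harder--Narasimhan filtration, which is precisely what you wrote. The index bookkeeping ($\mu_\alpha^{\max}(\sF/\sF_i)=\mu_\alpha(\cQ_{i+1})$, with the case $i=k$ handled separately) and the saturation check are both handled correctly.
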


We end this subsection with a remarkable result of Campana and P\u{a}un
concerning algebraic integrability of foliations.

\begin{thm}\label{algebraic_integrability_criterion}
Let $X$ be a normal $\mathbb{Q}$-factorial projective variety, $\alpha\in  \N_1(X)_\mathbb{R}$ a movable curve class, and $\sF\subset T_X$ a foliation on $X$. Suppose that $\mu_\alpha^{\min}(\sF) > 0$. 
Then $\sF$ is algebraically integrable, and  the closure of a general leaf is rationally connected.

In particular, if  $\sF$ is purely transcendental, then $K_{\sF}$ is pseudo-effective. 
\end{thm}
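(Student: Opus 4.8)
The plan is to reduce the statement to the criterion of Bogomolov--McQuillan and Campana--P\u{a}un that a foliation whose restriction to a sufficiently general minimal rational curve is sufficiently positive is algebraically integrable. The hypothesis $\mu_\alpha^{\min}(\sF) > 0$ says that \emph{every} torsion-free quotient of $\sF$ has positive slope with respect to the movable class $\alpha$; equivalently, by \eqref{mu_min_Fi}, the whole sheaf $\sF$ is the positive part of itself, and in particular $\sF$ is $\alpha$-semistable with positive slope or built from such pieces. First I would invoke \cite{bdpp} (as recorded in the definition of $\Mov(X)$): after passing to a resolution if necessary and perturbing, $\alpha$ can be taken to be represented by a connecting family of curves covering $X$, so that positivity of slope against $\alpha$ forces positivity of the restriction of $\sF$ to a general member of that family.

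The key steps, in order, would be: (1) reduce to the case where $X$ is smooth and $\alpha$ is (a limit of) classes of free rational curves of minimal degree in a covering family $\mathcal{H}$ --- this uses the $\mathbb{Q}$-factoriality hypothesis and a bend-and-break / MRC-type argument, or more directly one quotes the relevant result from \cite{campana_paun15} or \cite{campana_peternell11} which is already formulated for movable classes and normal $\mathbb{Q}$-factorial $X$; (2) translate $\mu_\alpha^{\min}(\sF)>0$ into the statement that, for a general curve $C$ in the family, the restriction $\sF_{|C}$ is an ample vector bundle, or at least has all Harder--Narasimhan slopes positive, using that $\deg(\sF_{|C}) = \det(\sF)\cdot[C]$ and that semistability with respect to $\alpha$ restricts well to general $C$; (3) apply the algebraic integrability criterion (Bogomolov--McQuillan, \cite{bogomolov_mcquillan01}, in the form generalized by Campana--P\u{a}un) to conclude that $\sF$ is algebraically integrable and that the closure of a general leaf is rationally connected, since it is covered by the rational curves of the family to which $\sF$ is tangent and along which $\sF$ is positive.

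For the final clause: if $\sF$ is purely transcendental, then by definition there is no positive-dimensional algebraic subvariety through a general point tangent to $\sF$, so $\sF$ cannot be algebraically integrable of positive rank. Combined with what was just proved, this forces $\mu_\alpha^{\min}(\sF) \le 0$ for every movable class $\alpha$. Now suppose $K_\sF$ were not pseudo-effective; then $-K_\sF = \det(\sF)^{*}$... wait, $\sO_X(-K_\sF)\cong\det(\sF)$, so $K_\sF$ not pseudo-effective means $\det(\sF) = \sO_X(-K_\sF)$ has the property that $-K_\sF$ is not pseudo-effective, i.e.\ $\det(\sF)$ pairs negatively with some movable class; more precisely, by the duality of \cite{bdpp} between pseudo-effective divisors and movable curves, $K_\sF$ fails to be pseudo-effective exactly when there is a movable class $\alpha$ with $K_\sF\cdot\alpha < 0$, equivalently $\det(\sF)\cdot\alpha > 0$, equivalently $\mu_\alpha(\sF) > 0$. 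Passing to the maximal destabilizing subsheaf $\sF_1$ with respect to such an $\alpha$, one has $\mu_\alpha^{\min}(\sF_1) = \mu_\alpha(\sF_1) \ge \mu_\alpha(\sF) > 0$, and by Corollary~\ref{integrability} $\sF_1$ is again a foliation; then the first part of the theorem applies to $\sF_1$, which is therefore algebraically integrable with rationally connected general leaf, contradicting pure transcendence of $\sF$. Hence $K_\sF$ is pseudo-effective.

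The main obstacle I anticipate is step (2): making precise the passage from positivity of $\det(\sF)$ against the \emph{movable class} $\alpha$ to positivity of $\sF_{|C}$ for a \emph{general individual curve} $C$, since slope-semistability with respect to $\alpha$ does not literally say each restriction $\sF_{|C}$ is semistable --- one needs the theorem of \cite{gkp_movable} (already cited in Lemma~\ref{CP15_lemma_4.9}) that Harder--Narasimhan filtrations with respect to movable classes restrict compatibly to general complete-intersection-type curves, together with the fact that a vector bundle on $\mathbb{P}^1$ with positive degree and positive minimal HN-slope is ample. Once that dictionary is in place, the algebraic integrability criterion is a black box and the rational connectedness of the general leaf comes for free from the covering family of rational curves tangent to $\sF$.
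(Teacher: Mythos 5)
Your fallback route is exactly the paper's proof: pass to a resolution $\nu\colon\widehat{X}\to X$, use \cite[Proposition 2.7 and Remark 2.8]{gkp_movable} to see that $\mu^{\min}$ is preserved under numerical pull-back (this is how the normal $\mathbb{Q}$-factorial case is reduced to the smooth case), and then quote \cite[Theorem 4.2]{campana_paun15} as a black box --- that theorem already delivers both algebraic integrability and rational connectedness of the closures of general leaves for movable classes on smooth varieties. Your argument for the ``in particular'' clause (BDPP duality to produce $\alpha$ with $\mu_\alpha(\sF)>0$, pass to the maximal destabilizing subsheaf, use Corollary~\ref{integrability} to see it is a foliation with $\mu_\alpha^{\min}>0$, and contradict pure transcendence) is correct and is essentially what the paper leaves implicit.

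However, your \emph{primary} route contains a genuine error in step (1): a movable class $\alpha$ cannot in general be taken to be a (limit of) classes of free rational curves in a covering family. By \cite{bdpp}, $\Mov(X)$ is generated by classes of \emph{strongly movable} curves, which are images of general complete intersection curves on modifications --- not rational curves; an abelian variety has a full-dimensional movable cone and no rational curves at all. Consequently you cannot start from a covering family of rational curves along which $\sF$ is positive, and the final remark that rational connectedness of the leaf ``comes for free from the covering family of rational curves tangent to $\sF$'' has the logic backwards: those rational curves are an \emph{output} of the Bogomolov--McQuillan/Campana--P\u{a}un machinery (produced inside the leaves after restricting to general complete intersection curves and applying bend-and-break), not an input. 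The obstacle you flag in step (2) --- restricting the Harder--Narasimhan data for a movable class to a general member of a prescribed curve family --- is real, and is precisely why the correct reduction goes through general complete intersection curves inside the quoted theorem rather than through a hypothetical rational covering family. So the proof stands only via your ``more directly, quote \cite{campana_paun15}'' alternative, preceded by the resolution step; the rational-curve reduction should be discarded.
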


\begin{proof}
Let $\nu\colon \widehat{X} \to X$ be a resolution of singularities. By \cite[Proposition 2.7 and Remark 2.8]{gkp_movable}, 
we have $\mu_\alpha^{\min}(\sF) = \mu_{\nu^*\alpha}^{\min}\big(\nu^{-1}\sF\big)$, where 
$\nu^*\alpha\in \Mov(\widehat{X})$ is the numerical pull-back of $\alpha$. 
The claim now follows from \cite[Theorem 4.2]{campana_paun15} applied to $\nu^{-1}\sF$.
\end{proof}

The following is an immediate consequence of Corollary~\ref{integrability} and Theorem~\ref{algebraic_integrability_criterion}. 

\begin{cor}\label{alg_integrability}
In the setup of Paragraph~\ref{HNF}, suppose that $\sF\subset T_X$ is a foliation on $X$ with $\mu_\alpha^{\max}(\sF)>0$.
Then, for $1\le i\le s$, $\sF_i\subset T_X$ is an  algebraically integrable foliation, and  the closure of a general leaf is rationally connected.
\end{cor}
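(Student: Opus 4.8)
The plan is to decompose the Harder-Narasimhan filtration step by step and apply the two ingredients just established, Corollary~\ref{integrability} and Theorem~\ref{algebraic_integrability_criterion}, to each piece. First I would fix $i$ with $1 \le i \le s$, so that by definition of $s$ we have $\mu_\alpha(\cQ_j) > 0$ for all $j \le i$; in particular $\mu_\alpha(\cQ_j) \ge 0$ for $j \le i$, and more relevantly $\mu_\alpha(\cQ_i) > 0$. Since $\mu_\alpha^{\max}(\sF) = \mu_\alpha(\cQ_1) \ge \mu_\alpha(\cQ_i) > 0 \ge 0$, the hypothesis of Corollary~\ref{integrability} is satisfied, and applying it to each index $j$ with $1 \le j \le i$ (each of which has $\mu_\alpha(\cQ_j) \ge 0$) shows that $\sF_i \subset T_X$ is a foliation on $X$.

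Next I would verify that $\sF_i$ has strictly positive minimal slope. By \eqref{mu_min_Fi} we have $\mu_\alpha^{\min}(\sF_i) = \mu_\alpha(\cQ_i)$, and since $i \le s$ this equals a strictly positive number by the definition of $s$. Hence $\mu_\alpha^{\min}(\sF_i) > 0$, so Theorem~\ref{algebraic_integrability_criterion} applies to the foliation $\sF_i$: it is algebraically integrable, and the closure of a general leaf is rationally connected. This is exactly the assertion of the corollary, so the proof is complete.

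I do not anticipate a genuine obstacle here: the statement is explicitly flagged in the text as "an immediate consequence" of the two preceding results, and the only content is bookkeeping with the inequalities $\mu_\alpha(\cQ_1) > \cdots > \mu_\alpha(\cQ_s) > 0$ coming from the definition of the positive part $\sF_\alpha^+ = \sF_s$ in Paragraph~\ref{HNF}. The one point worth stating carefully is the reduction of the integrability hypothesis: Corollary~\ref{integrability} requires $\mu_\alpha^{\max}(\sF) \ge 0$ and $\mu_\alpha(\cQ_i) \ge 0$, both of which follow from $i \le s$ (indeed with strict inequalities), so there is no edge case to worry about.

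\begin{proof}
Fix $i$ with $1\le i\le s$. By definition of $s$, we have $\mu_\alpha(\cQ_j)>0$ for all $1\le j\le i$. In particular $\mu_\alpha^{\max}(\sF)=\mu_\alpha(\cQ_1)>0$, so Corollary~\ref{integrability} applies and shows that $\sF_j\subset T_X$ is a foliation for each $1\le j\le i$; taking $j=i$ gives that $\sF_i$ is a foliation on $X$. Moreover, by \eqref{mu_min_Fi}, $\mu_\alpha^{\min}(\sF_i)=\mu_\alpha(\cQ_i)>0$ since $i\le s$. Theorem~\ref{algebraic_integrability_criterion} applied to $\sF_i$ then shows that $\sF_i$ is algebraically integrable and that the closure of a general leaf is rationally connected.
\end{proof}
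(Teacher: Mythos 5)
Your proof is correct and is exactly the argument the paper intends: the paper gives no separate proof, declaring the corollary an immediate consequence of Corollary~\ref{integrability} (which yields that $\sF_i$ is a foliation since $\mu_\alpha(\cQ_i)>0\ge 0$) and Theorem~\ref{algebraic_integrability_criterion} (applied via $\mu_\alpha^{\min}(\sF_i)=\mu_\alpha(\cQ_i)>0$ from \eqref{mu_min_Fi}). The only cosmetic remark is that the intermediate pass over all indices $j\le i$ is unnecessary, since Corollary~\ref{integrability} applies directly to the single index $i$.
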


%
%
%
%

\section{Characterization of generic projective space bundles}

\begin{thm}\label{thm:positive_twisted_slope}
Let $X$ be a normal $\mathbb{Q}$-factorial projective variety, and $\sL$ an ample line bundle on $X$.
Suppose that $\mu^{\max}_\alpha(T_X\otimes\sL^*)>0$ for some movable curve class 
$\alpha\in\Mov(X)$, and let $\sT_1$ be the maximal destabilizing subsheaf of $T_X$ with respect to $\alpha$.
\begin{enumerate}
\item Then $\sT_1$ is induced by a generic $\mathbb{P}^{r_1}$-bundle structure 
$\pi_1\colon X \map Y_1$ on $X$, and $\sL$ restricts to $\sO_{\mathbb{P}^{r_1}}(1)$ on a general fiber of $\pi_1$. Moreover, $\mu_{\alpha}^{\max}\big((T_X/\sT_1)\otimes\sL^*\big) \le 0$.
\item If $\sF$ is a foliation on $X$ and $\mu_\alpha(\sF\otimes\sL^*)>0$, then  $\sT_1 \subseteq \sF$, and equality holds if and only if $\sF$ is algebraically integrable.
\end{enumerate}
\end{thm}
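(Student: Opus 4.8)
The strategy is to build the generic projective space bundle structure from the maximal destabilizing sheaf $\sT_1$ by showing it is an algebraically integrable foliation whose general log leaf is forced to be $(\mathbb{P}^{r_1}, 0)$, and then to identify $\sL$ on the fibers and establish the statements in (2) by comparing $\sF$ with $\sT_1$.

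\medskip

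\noindent\emph{Step 1: $\sT_1$ is an algebraically integrable foliation with positive slope.} Since $\mu_\alpha^{\max}(T_X\otimes\sL^*)>0$, we have $\mu_\alpha^{\max}(T_X)>\deg_\alpha\sL=\sL\cdot\alpha=:d>0$, so the first Harder--Narasimhan piece $\sT_1$ of $T_X$ satisfies $\mu_\alpha(\sT_1)=\mu_\alpha^{\min}(\sT_1)>d>0$. By Corollary~\ref{integrability}, $\sT_1$ is a foliation; by Theorem~\ref{algebraic_integrability_criterion} (or Corollary~\ref{alg_integrability}), it is algebraically integrable and the closure of a general leaf is rationally connected. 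Moreover $\sT_1$ is saturated in $T_X$, hence $T_X/\sT_1$ is torsion-free, and by \eqref{mu_min_Fi} we have $\mu_\alpha^{\max}(T_X/\sT_1)=\mu_\alpha(\cQ_2)<\mu_\alpha(\cQ_1)$; a finer comparison with $d$ will give $\mu_\alpha^{\max}\big((T_X/\sT_1)\otimes\sL^*\big)\le 0$ once we know $\mu_\alpha(\cQ_2)\le d$ — this follows because if $\mu_\alpha(\cQ_2)>d$ then $\sT_2$ would again be an algebraically integrable foliation of slope $>d$ strictly containing $\sT_1$, and we will rule this out in Step~3 by a leafwise tangent-bundle estimate.

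\medskip

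\noindent\emph{Step 2: the general log leaf is $(\mathbb{P}^{r_1},0)$.} Apply Paragraph~\ref{family_leaves} to $\sT_1$: let $\nu\colon Z\to X$, $\pi\colon Z\to Y$ be the family of leaves, $(F,\Delta_F)$ the general log leaf, $i\colon F\to X$ its map to $X$. The key point is a positivity transfer: restricting the inclusion $\sT_1\hookrightarrow T_X$ and using that for a general leaf $L_y$ one has $T_{L_y}=\sT_1|_{L_y}$, together with the slope inequality $\mu_\alpha(\sT_1\otimes\sL^*)>0$, forces (via a Mehta--Ramanathan type restriction to general complete-intersection curves in the movable class $\alpha$, or via \cite[Lemma 2.14]{hoering}-style arguments and Lemma~\ref{lemma:pseudo_effective}) that $K_F+r_1\cdot i^*\sL$ is \emph{anti}-pseudo-effective, i.e.\ $-K_{\sT_1}\equiv$ ample-ish on the leaf. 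Precisely, $-K_F-\Delta_F\sim_\mathbb{Q} i^*(-K_{\sT_1})$ has slope $>r_1 d>0$ against the pushed-forward curve class, so $-K_F$ is big; a dimension-$r_1$ rationally connected variety $F$ with $-(K_F+\Delta_F)$ sufficiently positive and $\Delta_F$ effective must satisfy, by the characterization of projective space (Theorem~\ref{thm:hoering} applied on a resolution of $F$, or Kobayashi--Ochiai/Mori together with \cite[Prop.\ 5.3]{fano_fols}, \cite[Prop.\ 3.13]{fano_fols_2}), that $F\cong\mathbb{P}^{r_1}$, $\Delta_F=0$, and $i^*\sL\cong\sO_{\mathbb{P}^{r_1}}(1)$. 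Here one exploits that $\mu_\alpha(\sT_1\otimes\sL^*)>0$ gives exactly the numerical room for the "$-K_F\geq (\dim F+1)$" inequality on the leaf — this is where Theorem~\ref{thm:hoering} enters as the engine.

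\medskip

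\noindent\emph{Step 3: assembling the bundle structure and maximality.} Since $\Delta_F=0$ and $\nu$ is birational, Lemma~\ref{lemma:dicritical} gives $\mathrm{Exc}(\nu)\cap Z_y=\emptyset$ for general $y$, so $\nu$ is an isomorphism over a neighbourhood of the general leaf; hence $\pi_1:=\pi\circ\nu^{-1}\colon X\dashrightarrow Y_1:=Y$ is an almost proper dominant map with general fibre $\mathbb{P}^{r_1}$, i.e.\ a generic $\mathbb{P}^{r_1}$-bundle inducing $\sT_1$, and $\sL$ restricts to $\sO_{\mathbb{P}^{r_1}}(1)$. For maximality ($\mu_\alpha^{\max}((T_X/\sT_1)\otimes\sL^*)\le 0$): if not, $\sT_2\supsetneq\sT_1$ would be an algebraically integrable foliation with $\mu_\alpha(\sT_2\otimes\sL^*)>0$, and its general log leaf $F'$ of dimension $r_2>r_1$ would, by the same Step~2 argument, be $\mathbb{P}^{r_2}$ with $i^*\sL=\sO(1)$; but then $T_{\mathbb{P}^{r_1}}=\sT_1|_F\subseteq T_{F'}|_F=T_{\mathbb{P}^{r_2}}|_{\mathbb{P}^{r_1}}$ forces $\sL|_{\mathbb{P}^{r_1}}\cong\sO_{\mathbb{P}^{r_1}}(1)$ compatibly, and an $\mathrm{h}^0$ / degree count on the fibres of $\pi_1\to$ fibres of $\pi_2$ contradicts $\mu_\alpha(\cQ_2)>d$ — concretely, $\mu_\alpha(T_{F'}\otimes\sL^*)=1/r_2$-type bounds on the new fibre direction cannot exceed the old one. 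This proves (1).

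\medskip

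\noindent\emph{Step 4: part (2).} Let $\sF\subseteq T_X$ be a foliation with $\mu_\alpha(\sF\otimes\sL^*)>0$, i.e.\ $\mu_\alpha(\sF)>d$. Then $\mu_\alpha^{\min}(\sF)$ may be smaller, but the maximal destabilizing subsheaf $\sF_1$ of $\sF$ has $\mu_\alpha(\sF_1)\ge\mu_\alpha(\sF)>d>0$, so $\sF_1$ is a foliation (Lemma~\ref{CP15_lemma_4.9}/Corollary~\ref{integrability}) with $\mu_\alpha^{\min}(\sF_1)>0$, hence algebraically integrable with rationally connected leaves. By uniqueness of the maximal destabilizing subsheaf of $T_X$ and the inclusion $\sF_1\subseteq T_X$ with $\mu_\alpha(\sF_1)>0=\mu_\alpha(\cQ_2)$-threshold (more precisely $\mu_\alpha(\sF_1)>d\ge\mu_\alpha^{\max}(T_X/\sT_1)+d$... so $\sF_1\subseteq\sT_1$), and symmetrically since $\sT_1$ is itself $\alpha$-semistable of slope $>d$ with $\sT_1\subseteq T_X$, a slope comparison gives $\sT_1\subseteq\sF$: indeed the image of $\sT_1$ in $T_X/\sF$ is a quotient of the $\alpha$-semistable sheaf $\sT_1$ of slope $\mu_\alpha(\sT_1)>d$, while $\mu_\alpha^{\max}(T_X/\sF)\le\mu_\alpha^{\max}(T_X/\sT_1)\le d$ wait — one argues instead that $\sT_1\cap\sF$ has slope $\ge\mu_\alpha(\sT_1)$ forcing $\sT_1\cap\sF=\sT_1$, i.e.\ $\sT_1\subseteq\sF$. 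Finally, $\sT_1=\sF$ iff $\sF$ is algebraically integrable: if $\sF$ is algebraically integrable it is a pull-back, its leaves contain the $\mathbb{P}^{r_1}$-fibres of $\pi_1$, and comparing with Theorem~\ref{algebraic_integrability_criterion} plus the classification of the general log leaf forces $r=r_1$; conversely $\sT_1$ is algebraically integrable by Step~1.

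\medskip

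\noindent\emph{Main obstacle.} The crux is Step~2: upgrading the numerical positivity $\mu_\alpha(\sT_1\otimes\sL^*)>0$ — a condition about a single movable class — into genuine positivity of $-K_F-\Delta_F$ on the general leaf strong enough to invoke Theorem~\ref{thm:hoering} and conclude $F\cong\mathbb{P}^{r_1}$, $\Delta_F=0$, $i^*\sL=\sO(1)$. This requires carefully pushing the curve class $\alpha$ into the family of leaves (via $\nu$, $\pi$) and a resolution $\widehat F\to F$, controlling the ramification and $\Delta$ contributions from Paragraph~\ref{family_leaves}, and quite possibly a Mehta--Ramanathan / restriction-to-curves argument to reduce movable-class slope positivity on $X$ to ample-class slope positivity on a curve in the leaf. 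Everything else is bookkeeping with Harder--Narasimhan filtrations and the lemmas already in place.
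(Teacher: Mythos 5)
Your skeleton matches the paper's: pass to the Harder--Narasimhan filtration, show $\sT_1$ is an algebraically integrable foliation with $\mu_\alpha(\sT_1\otimes\sL^*)>0$, use Lemma~\ref{lemma:pseudo_effective} to see that $K_{\widehat F}+r_1\nu_F^*L_{|F}$ is not pseudo-effective and hence (by H\"oring's Kobayashi--Ochiai-type lemma) that the normalized leaf closure is $(\mathbb{P}^{r_1},\sO(1))$, and reduce maximality and part~(2) to the impossibility of nested generic projective-bundle structures. But there is a genuine gap exactly at the point you flag as the ``main obstacle'', and it is not resolved by anything you propose. Knowing $(F,\sL_{|F})\cong(\mathbb{P}^{r_1},\sO_{\mathbb{P}^{r_1}}(1))$ does \emph{not} give $\Delta_F=0$: by Lemma~\ref{lemma:dicritical}, $\Delta_F=0$ is \emph{equivalent} to $\textup{Exc}(\nu)$ not dominating $Y$, i.e.\ to the rational map $X\map Y$ being almost proper --- which is precisely the content of ``generic $\mathbb{P}^{r_1}$-bundle''. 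So your Step~2 assumes the conclusion. Note that positivity of $-(K_F+\Delta_F)$ cannot by itself force $\Delta_F=0$: in the boundary case of Theorem~\ref{thm:ko} the same machinery yields $\Delta_F$ equal to a hyperplane. Moreover you only control $K_{\sT_1}\cdot\alpha$ for the one given class $\alpha$, not the degree of $K_{\sT_1}$ on a line of the leaf, so no direct degree count is available. The paper closes this gap with a separate argument: if $\textup{Exc}(\nu)$ dominated $Y$, then (after base change to a resolution of $Y$) the exceptional locus would be purely divisorial because $X$ is $\mathbb{Q}$-factorial --- a hypothesis your proposal never uses --- and a dominating exceptional divisor on $Z\cong\mathbb{P}_Y(\sE)$ produces a section of $\textup{S}^k\mu^*\sE\otimes\sI^*$; semi-ampleness of $\sE=\psi_*\nu^*\sL$ and Lemma~\ref{lemma:pseudo-effective_cone} then make $K_{\sT_1}+r_1L$ pseudo-effective, contradicting $\mu_\alpha(\sT_1\otimes\sL^*)>0$. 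Some such argument is indispensable.

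Part~(2) as written also does not close. Your direct attempt to get $\sT_1\subseteq\sF$ by mapping $\sT_1$ to $T_X/\sF$ needs an upper bound on $\mu_\alpha^{\max}(T_X/\sF)$, which you do not have, and the fallback claim that $\sT_1\cap\sF$ has slope at least $\mu_\alpha(\sT_1)$ is unjustified. The argument runs in the other direction: the maximal destabilizing subsheaf $\sF_1$ of $\sF$ satisfies $\mu_\alpha^{\min}(\sF_1\otimes\sL^*)=\mu_\alpha(\sF_1\otimes\sL^*)>0\ge\mu_\alpha^{\max}\big((T_X/\sT_1)\otimes\sL^*\big)$, so the composite $\sF_1\to T_X\to T_X/\sT_1$ vanishes and $\sF_1\subseteq\sT_1$; since $\sF_1$ is itself induced by a generic $\mathbb{P}^{s_1}$-bundle structure by Step~1, the general fiber $\mathbb{P}^{r_1}$ of $\pi_1$ would carry a generic $\mathbb{P}^{s_1}$-bundle structure, forcing $s_1=r_1$ and hence $\sT_1=\sF_1\subseteq\sF$. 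The equality criterion then follows by applying Step~1 to $\sF$ itself when $\sF$ is algebraically integrable.
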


\begin{proof}

\

\medskip

\noindent\textbf{Step 1}.
Suppose first that $\sF$ is an algebraically integrable foliation of rank $r$, 
and $\mu_\alpha(\sF\otimes\sL^*)>0$ for some movable curve class  $\alpha\in\Mov(X)$. 
We show that
$\sF$ is induced by a generic $\mathbb{P}^r$-bundle structure 
$\pi\colon X \map Y$ on $X$ and that $\sL$ restricts to $\sO_{\mathbb{P}^r}(1)$ on a general fiber of $\pi$.

Let $F$ be the normalization of the closure of a general leaf of $\sF$, and let 
$\nu_F\colon \widehat{F}\to F$ be a resolution of singularities. Let $L$ be a divisor on $X$ such that 
$\sO_X(L)\cong \sL$. 
If $K_{\widehat{F}}+\dim F\cdot\nu_F^*L_{|F}$ is pseudo-effective, then so is 
$K_\sF+\dim F\cdot L$ by Lemma \ref{lemma:pseudo_effective}.
This is absurd since $-r\mu_\alpha(\sF\otimes\sL^*)=(K_\sF+\dim F\cdot L)\cdot\alpha<0$ by assumption, proving 
that $K_{\widehat{F}}+\dim F\cdot\nu_F^*L_{|F}$ is not pseudo-effective.
Apply \cite[Lemma 2.5]{hoering_fol} to conclude that 
$(F,\sL_{|F})\cong(\mathbb{P}^r,\sO_{\mathbb{P}^r}(1))$.
Consider the family of leaves of $\sF$ as in Paragraph~\ref{family_leaves}:
$$
\xymatrix{
Z \ar[r]^{\nu}\ar[d]_{\psi} & X \\
Y. &
}
$$ 
By \cite[Proposition 4.10]{codim_1_del_pezzo_fols}, we have
$(Z,\nu^*\sL)\cong(\mathbb{P}_Y\big(\sE),\sO_{\mathbb{P}_Y(\sE)}(1)\big)$ where
$\sE:=\psi_*\nu^*\sL$. 

Suppose that the exceptional set
$\textup{Exc}(\nu)$ dominates $Y$ under $\psi$. 
Consider a resolution of singularities
$\mu\colon \widehat{Y} \to Y$, and set 
$\widehat{Z}=\mathbb{P}_{\widehat{Y}}(\mu^*\sE)\cong \widehat{Y}\times_YZ$ with natural morphisms
$\widehat{\psi}\colon\widehat{Z}\to \widehat{Y}$ and 
$\widehat{\nu}\colon\widehat{Z}\to X$.
The exceptional set
$\textup{Exc}(\widehat{\nu})$ also dominates $\widehat{Y}$ under $\widehat{\psi}$.
Note that $\textup{Exc}(\widehat{\nu})$ has pure codimension one since $X$ is $\mathbb{Q}$-factorial.
Let $\widehat{E}$ be an irreducible component of 
$\textup{Exc}(\widehat{\nu})$ dominating $\widehat{Y}$.
Then $\sO_{\widehat{Z}}(-\widehat{E})\cong \sO_{\mathbb{P}_{\widehat{Y}}(\mu^*\sE)}(-k)\otimes\widehat{\psi}^*\sI$
for some positive integer $k$, and some line bundle $\sI$ on $\widehat{Y}$. In particular, 
$h^0(\widehat{Y},\textup{S}^k\mu^*\sE\otimes \sI^*)\ge 1$. 
Let $\widehat{\xi}$ be a divisor on $\widehat{Z}$ such that
$\sO_{\widehat{Z}}(\widehat{\xi})\cong \sO_{\mathbb{P}_{\widehat{Y}}(\mu^*\sE)}(1)$. Notice that 
$\sO_{\mathbb{P}_{\widehat{Y}}(\mu^*\sE)}(1)$ is the pull-back of $\sO_{\mathbb{P}_Y(\sE)}(1)\cong\nu^*\sL$ under the natural morphism 
$\mathbb{P}_{\widehat{Y}}(\mu^*\sE) \to \mathbb{P}_Y(\sE)\cong Z$, and hence it is semi-ample. 
From Lemma \ref{lemma:pseudo-effective_cone} below, we conclude that 
$m_0(K_{\widehat{Z}/\widehat{Y}}+r\widehat{\xi}+s\widehat{E})$
is effective for some positive integer $m_0$ and every sufficiently large integer $s$.
This in turn implies that 
$$
\widehat{\nu}_*(K_{\widehat{Z}/\widehat{Y}}+r\widehat{\xi}+s\widehat{E})=K_{\sF}+rL
$$
is pseudo-effective. Thus $\mu_\alpha(\sF\otimes\sL^*) \le 0$, yielding a contradiction.
This proves that the map $\pi\colon X\map Y$ induced by $\psi$ is almost proper, and so 
$\sF$ is induced by a generic $\mathbb{P}^r$-bundle structure  on $X$.

\medskip

\noindent\textbf{Step 2}. We prove statement (1).

Let 
$
0=\sT_0\subsetneq \sT_1\subsetneq \cdots\subsetneq \sT_k=T_X
$
be the Harder-Narasimhan filtration of $T_X$ with respect to $\alpha$, and set $\cQ_i=\sT_i/\sT_{i-1}$. 
By Corollary~\ref{alg_integrability}, $\sT_1$ is an algebraically integrable foliation on $X$. 
Moreover,  $\mu_\alpha(\sT_1\otimes\sL^*)>0$. From Step 1 applied to $\sT_1$, we conclude that  
$\sT_1$ is induced by a generic $\mathbb{P}^{r_1}$-bundle structure 
$\pi_1\colon X \map Y_1$ on $X$ and $\sL$ restricts to $\sO_{\mathbb{P}^{r_1}}(1)$ on a general fiber of $\pi_1$.

Suppose that $\mu^{\max}_\alpha\big((T_X/\sT_1)\otimes\sL^*\big)=\mu_\alpha(\cQ_2\otimes\sL^*)>0$. 
By Corollary~\ref{alg_integrability}, $\sT_2$ is an algebraically integrable foliation on $X$. 
By Step 1, $\sT_2$ is induced by a generic $\mathbb{P}^{r_2}$-bundle structure on $X$. This is impossible since $\mathbb{P}^{r_2}$ cannot admit a 
generic $\mathbb{P}^{r_1}$-bundle structure with $r_1<r_2$. Thus $\mu_\alpha(\sQ_2\otimes\sL^*) \le 0$,
proving (1).

\medskip

\noindent\textbf{Step 3}. We prove statement (2).

Suppose that $\sF$ is a foliation on $X$, and
$\mu_\alpha(\sF\otimes\sL^*)>0$ for some movable curve class $\alpha\in\Mov(X)$.
Let $\sF_1 \subset \sF$ be the maximal destabilizing subsheaf of $\sF$ with respect to $\alpha$.
By Corollary~\ref{alg_integrability}, $\sF_1$ is an algebraically integrable foliation on $X$. 
Moreover, we have $\mu_\alpha(\sF_1\otimes\sL^*)>0$.
By Step 1, $\sF_1$ is induced by generic $\mathbb{P}^{s_1}$-bundle structure on $X$.
Let $\sT_1\subset T_X$ be as in Step 2.
By \cite[Corollary 2.17]{gkp_movable}, $\sF_1 \subset \sT_1$ since 
$\mu_\alpha^{\textup{min}}(\sF_1\otimes\sL^*)=\mu_\alpha(\sF_1\otimes\sL^*)>0$
and $\mu_{\alpha}^{\max}\big((T_X/\sT_1)\otimes\sL^*\big) \le 0$ by Step 2. Hence, we must have $\sF_1 = \sT_1$.

If $\sF$ is algebraically integrable, then from Step 1, we conclude that 
$\sF_1 = \sT_1=\sF$. 
\end{proof}

Recall that a vector bundle $\sE$ on a normal projective variety $Y$ is said to be \emph{semi-ample} if the line bundle $\sO_{\mathbb{P}_Y(\sE)}(m)$ on $\mathbb{P}_Y(\sE)$ is generated by its global sections for some positive integer $m$.

\begin{lemma}\label{lemma:pseudo-effective_cone}
Let $Y$ be a normal projective variety,  $\sM$  a line bundle on $Y$, 
and  $\sE$  a semi-ample vector bundle on $Y$. Suppose that $h^0(Y,\textup{S}^k\sE\otimes \sM)\ge 1$ for some
integer $k \ge 1$. Then there exists an integer $m_0 \ge 1$ such that
$h^0(Y,\textup{S}^{m_0(sk-1)}\sE\otimes\det(\sE)^{\otimes m_0}\otimes \sM^{\otimes m_0s})\ge 1$ for every sufficiently large integer $s$.
\end{lemma}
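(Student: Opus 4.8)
The plan is to exploit the hypothesis $h^0(Y,\textup{S}^k\sE\otimes\sM)\ge 1$ together with semi-ampleness of $\sE$ on the projectivization $P:=\mathbb{P}_Y(\sE)$, where we write $\xi$ for a divisor with $\sO_P(\xi)\cong\sO_P(1)$ and $p\colon P\to Y$ for the projection. The section of $\textup{S}^k\sE\otimes\sM$ gives a nonzero section of $\sO_P(k\xi)\otimes p^*\sM$ on $P$, hence an effective divisor $D\in|k\xi+p^*M|$ (with $\sO_Y(M)\cong\sM$). Semi-ampleness of $\sE$ means $\sO_P(\xi)$ is semi-ample, so some positive multiple is globally generated; in particular $\xi$ is nef. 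The relative dimension of $p$ is $r-1$ where $r=\rank\sE$, and the relative canonical bundle satisfies $K_{P/Y}\sim -r\xi+p^*\det(\sE)$, i.e. $\sO_P(K_{P/Y})\cong\sO_P(-r)\otimes p^*\det(\sE)$ by the Euler sequence. Notice that $\textup{S}^{N}\sE\otimes\det(\sE)^{\otimes a}\otimes\sM^{\otimes b}$ has a nonzero section if and only if the line bundle $N\xi - K_{P/Y} + (a-1)p^*\det(\sE)+b\,p^*M = (N+r)\xi + (a-1)p^*\det(\sE)+b\,p^*M$ — wait, more directly: $h^0\big(Y,\textup{S}^{N}\sE\otimes\det(\sE)^{\otimes a}\otimes\sM^{\otimes b}\big)=h^0\big(P,\sO_P(N\xi)\otimes p^*\det(\sE)^{\otimes a}\otimes p^*\sM^{\otimes b}\big)$ whenever $N\ge 0$, since $p_*\sO_P(N\xi)=\textup{S}^N\sE$ and higher direct images vanish in positive degree for $N\ge 0$. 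So the target is to produce, for some fixed $m_0$ and all $s\gg 0$, a nonzero section of $\sO_P\big(m_0(sk-1)\xi\big)\otimes p^*\det(\sE)^{\otimes m_0}\otimes p^*\sM^{\otimes m_0 s}$.

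First I would rewrite the desired line bundle as
$$
m_0(sk-1)\xi + m_0\,p^*\det(\sE) + m_0 s\,p^*M \ = \ m_0\big(s(k\xi+p^*M) + p^*\det(\sE) - \xi\big) \ = \ m_0\big(sD + (p^*\det(\sE)-\xi)\big).
$$
Since $D$ is effective, $sD$ contributes an effective divisor; the issue is the error term $p^*\det(\sE)-\xi$, which need not be effective or even pseudo-effective. The key point is that $-\xi + p^*\det(\sE)$ is, up to a twist by $(r-1)\xi$, the relative canonical bundle: indeed $p^*\det(\sE)-\xi = K_{P/Y} + (r-1)\xi$. So the bundle we want is $m_0\big(sD + K_{P/Y} + (r-1)\xi\big)$. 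Now I would invoke a relative base-point-free / relative semipositivity argument: $\xi$ is semi-ample, so $(r-1)\xi$ is semi-ample; and $D=k\xi+p^*M$ is itself of the form (nef relatively) $+$ (pull-back), so the $\mathbb{R}$-divisor $\tfrac{1}{s}\big((r-1)\xi\big) + D$ becomes, for large $s$, a small perturbation of $D$ by a semi-ample class. The cleanest route is to apply a relative effectivity statement for $K_{P/Y}+(\text{semi-ample})+s(\text{effective containing enough of a relatively semi-ample piece})$: more precisely, since $D$ restricted to a general fiber $\mathbb{P}^{r-1}$ is $\sO(k)$ with $k\ge 1$, the divisor $K_{P/Y}+(r-1)\xi+sD$ restricted to a general fiber is $\sO\big(-r+(r-1)+sk\big)=\sO(sk-1)$, which is effective (indeed very ample) for $s\ge 1$; so the bundle is $p$-globally-generated on an open set and $p_*$ of it is a nonzero sheaf on $Y$ for $s\ge 1$. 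To upgrade "nonzero sheaf on $Y$" to "has a global section after multiplying by $m_0$", I would use that $p_*\sO_P\big(m_0(sk-1)\xi\big)\otimes\det(\sE)^{\otimes m_0}\otimes\sM^{\otimes m_0 s}$ contains, for $s\gg0$, a subsheaf built from the $m_0$-th symmetric/tensor power of the section of $\textup{S}^k\sE\otimes\sM$ multiplied into the semi-ample factor $\textup{S}^{(r-1)m_0}\sE$-type piece — concretely, take the section $\sigma\in H^0(\textup{S}^k\sE\otimes\sM)$, form $\sigma^{\otimes m_0 s}\in H^0\big(\textup{S}^{m_0 s k}\sE^{\otimes}\otimes\sM^{\otimes m_0 s}\big)$ after symmetrizing, landing in $\textup{S}^{m_0sk}\sE\otimes\sM^{\otimes m_0 s}$, and then I need to "divide" by $\textup{S}^{m_0}\sE\otimes\det(\sE)^{-\otimes m_0}$-worth of sections, i.e. twist down by $m_0\xi$ and up by $m_0 p^*\det(\sE)$.

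The honest way to handle that "division" is the following: on $P$ the divisor $m_0\big(skD\big)$ — no, let me reorganize. Set $G:=m_0\big((sk-1)\xi + p^*\det(\sE) + s\,p^*M\big)$, the divisor in question. We have the linear equivalence $G \sim m_0 s\,(k\xi+p^*M) + m_0\big(p^*\det(\sE)-\xi\big) = m_0 s\,D + m_0(K_{P/Y}+(r-1)\xi)$. Now I would argue: $K_{P/Y}+(r-1)\xi = p^*\det(\sE)-\xi$ is a line bundle which, restricted to each fiber, is $\sO_{\mathbb{P}^{r-1}}(-1)$, so it has no sections; but $(r-1)D' + (K_{P/Y}+(r-1)\xi)$ for a suitable effective $D'$ can be made effective. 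The decisive tool is the existence, from semi-ampleness, of an effective divisor $E_0\in|N_0\xi|$ for some $N_0\ge 1$ (taking $N_0$ to be the $m$ in the definition of semi-ampleness and choosing a section; if $h^0(\textup{S}^{N_0}\sE)=0$ this could fail, but semi-ample with $m$ as in the definition gives $\sO_P(N_0)$ globally generated hence $h^0\ge 1$ provided $P\ne\emptyset$, which holds since $r\ge 1$). Then $m_0$ is chosen as a common multiple making all coefficients integral, and for $s\gg0$ the divisor $m_0 s D - \epsilon$ absorbs the negative part $-m_0\xi + m_0 p^*\det(\sE)$ because $D$ dominates $\xi$ on fibers while $p^*\det(\sE)$ is a pull-back. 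I expect the main obstacle to be exactly this bookkeeping: showing that the fiberwise-negative correction $-\xi+p^*\det(\sE)$ is dominated, after passing to a multiple and adding $s$ copies of the effective $D$ whose fiberwise degree is $k\ge1$, by an honest effective divisor on the total space $P$ — this requires combining the fiberwise positivity ($sk-1\ge 0$) with control over the pull-back part $p^*\det(\sE)$, which one gets from semi-ampleness of $\xi$ by writing $p^*\det(\sE)$ relative to a large multiple of $\xi$ via $K_{P/Y}$, and then cancelling along a relative Iitaka-type argument; alternatively one pushes forward to $Y$ and shows $\textup{S}^{m_0(sk-1)}\sE\otimes\det(\sE)^{\otimes m_0}\otimes\sM^{\otimes m_0 s}$ has a section directly by exhibiting it as a quotient or subsheaf of $\big(\textup{S}^k\sE\otimes\sM\big)^{\otimes m_0 s}\otimes\big(\text{globally generated bundle}\big)$ using that $\textup{S}^a\sE\otimes\det(\sE)$ is globally generated once $\sE$ is semi-ample and $a\gg0$.
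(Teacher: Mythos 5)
There is a genuine gap here. Your reformulation on $P=\mathbb{P}_Y(\sE)$ is correct and matches the shape of the problem: the target divisor is $m_0\bigl(sD+K_{P/Y}+(r-1)\xi\bigr)$ with $D=k\xi+p^*M$ effective, and the entire difficulty is the fiberwise-$\sO(-1)$ correction $p^*\det(\sE)-\xi$. But you never resolve that difficulty. Observing that the restriction to a general fiber is $\sO(sk-1)$, hence effective for $s\ge 1$, only shows that the pushforward $\textup{S}^{m_0(sk-1)}\sE\otimes\det(\sE)^{\otimes m_0}\otimes\sM^{\otimes m_0s}$ is a nonzero sheaf on $Y$; that is far from producing a global section, since the twist $\sM^{\otimes m_0s}$ can be arbitrarily negative. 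Each of the three upgrades you propose (``relative semipositivity,'' a ``relative Iitaka-type'' cancellation, and the final ``alternatively'') is left as a gesture, and the last one rests on the unjustified claim that $\textup{S}^a\sE\otimes\det(\sE)$ is globally generated for $a\gg 0$ when $\sE$ is semi-ample --- global generation of $\sO_P(a)$ does not push forward to global generation of $\textup{S}^a\sE$ on $Y$ --- and even granting it, global generation of that factor would not let you ``divide'' $\textup{S}^{m_0sk}\sE$ by $\textup{S}^{m_0}\sE$ while keeping the $\sM$-twist under control.

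The missing ingredient is a concrete sheaf map that trades one block of symmetric powers of $\sE$ for $\det(\sE)$ at the cost of a fixed vector space. The paper obtains it from Fujiwara's theorem: if $\sE$ is semi-ample, so is $\sE^*\otimes\det(\sE)$. Choosing $m_0$ with $\sO_{Z}(m_0)$ globally generated on $Z=\mathbb{P}_Y(\sE^*\otimes\det(\sE))$ and setting $V=H^0\bigl(Y,\textup{S}^{m_0}(\sE^*\otimes\det(\sE))\bigr)$, the multiplication map $V\otimes\textup{S}^{m}(\sE^*\otimes\det(\sE))\to\textup{S}^{m_0+m}(\sE^*\otimes\det(\sE))$ is generically surjective for $m\gg 0$ (fiberwise surjectivity via vanishing of $H^1$ of the twisted kernel on a fiber), and its transpose gives an injection of sheaves
$$
\textup{S}^{m_0+m}(\sE)\hookrightarrow V^*\otimes\textup{S}^{m}(\sE)\otimes\det(\sE)^{\otimes m_0}.
$$
Applying this with $m=m_0(sk-1)$ to the $m_0s$-th power of your section of $\textup{S}^k\sE\otimes\sM$ --- a nonzero section of $\textup{S}^{m_0sk}\sE\otimes\sM^{\otimes m_0s}$ --- yields the desired section. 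Without this step, or an equivalent mechanism, your argument does not close.
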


\begin{proof}
Let $\sE$  be a semi-ample vector bundle on $Y$.
By \cite[Corollary 1]{fujiwara}, the vector bundle $\sE^*\otimes\det(\sE)$ is semi-ample. Let $m_0$ be a positive integer such that the line bundle 
$\sO_{Z}(m_0)$ on $Z:=\mathbb{P}_Y(\sE^*\otimes\det(\sE))$ is generated by its global sections. Set $V:=H^0\big(Y,\textup{S}^{m_0}(\sE^*\otimes\det(\sE))\big)\cong H^0\big(Z,\sO_Z(m_0)\big)$, and consider the exact sequence
$$
0 \to \sK\otimes \sO_Z(m) \to V \otimes \sO_Z(m) \to \sO_Z(m_0+m)\to 0.
$$
Pick a point $y$ on $Y$, and  denote by $Z_y$ the fiber over $y$ of the natural morphism $Z \to Y$. 
Let $m_1$ be a positive integer such that 
$h^1\big(Z_y,\sK_{|Z_y}\otimes\sO_{Z_y}(m)\big)=0$ for $m\ge m_1$.
Then the map
$$
V\otimes H^0\big(Z_y,\sO_{Z_y}(m)\big) \to H^0\big(Z_y,\sO_{Z_y}(m_0+m)\big)
$$
is surjective for every $m\ge m_1$, and thus
the morphism 
$$
V\otimes\textup{S}^m\big(\sE^*\otimes\det(\sE)\big) \to \textup{S}^{m_0+m}\big(\sE^*\otimes\det(\sE)\big)
$$
is generically surjective. This yields,  for every $m\ge m_1$, an injective map of sheaves
\begin{equation}\label{into}
\textup{S}^{m_0+m}(\sE) \into V^*\otimes\textup{S}^m(\sE)\otimes\det(\sE)^{\otimes m_0}.
\end{equation}
Let $\sM$ be a line bundle on $Y$, and $k$ a positive integer such that 
$h^0(Y,\textup{S}^k\sE\otimes \sM)\ge 1$. 
Let $s$ be an integer such that $m:=m_0(sk-1) \ge m_1$. 
The $m_0s$-th power of a nonzero global section of $\textup{S}^k\sE\otimes \sM$ is a 
nonzero global section of $\textup{S}^{m_0sk}\sE\otimes\sM^{\otimes m_0s}$.
Then \ref{into} yields
$$
1\le h^0(Y,\textup{S}^{m_0sk}\sE\otimes\sM^{\otimes m_0s}) \le 
\dim V \cdot h^0(Y,\textup{S}^{m_0(sk-1)}\sE\otimes\det(\sE)^{\otimes m_0}\otimes\sM^{\otimes m_0s}),
$$
completing the proof of the lemma.
\end{proof}

\begin{proof}[{Proof of Corollary \ref{cor:p^-bdles_singular_case}}]
Suppose that there is a positive integer $m$, an ample line bundle $\sL$, 
a torsion-free  quotient  $(\Omega_X^1\otimes\sL)^{\otimes m}\twoheadrightarrow\sQ$ of positive rank,
and a movable class $\alpha\in\Mov(X)$ such that $\mu_\alpha(\sQ)<0$.
Then $\mu_\alpha^{min}\big((\Omega_X^1\otimes\sL)^{\otimes m}\big)<0$.
This implies that $\mu_\alpha^{max}\big((T_X\otimes\sL^*)^{\otimes m}\big)>0$.
Thus, by \cite[Theorem 4.2]{gkp_movable} (see also \cite[Proposition 6.1]{campana_peternell11}), $\mu_\alpha^{max}\big(T_X\otimes\sL^*\big)>0$.
Theorem \ref{thm:positive_twisted_slope} then implies that $X$ is a generic $\mathbb{P}^r$-bundle for some positive integer $r$.
\end{proof}

%
%
%
%

\section{Bounding the algebraic rank}

We start this section by justifying the definition of  generalised index in Definition~\ref{defn:generalized_index}.

\begin{lemma}\label{lemma:generalized_index}
Let $X$ be a complex projective manifold, and $D$ a big divisor on $X$.
Set 
$$\widehat{\iota}(D):=\sup\big\{t\in\mathbb{R} \ | \ D \equiv tA+E \text{ where } A \text{ is an ample divisor and }
E \text{ is a pseudo-effective }\mathbb{R}\text{-divisor}\big\}.
$$
Then there exists an ample divisor $A_0$ and a pseudo-effective $\mathbb{R}$-divisor $E_0$ on $X$ such that 
$D \equiv \widehat{\iota}(D)A_0+E_0$.
\end{lemma}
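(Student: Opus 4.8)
The plan is to recognise $\widehat{\iota}(D)$ as a maximum over a \emph{finite} set of ample divisor classes. Fix an ample divisor $H$ on $X$ and set $n=\dim X$. First I would check that $0<\widehat{\iota}(D)<\infty$: since $D$ is big, we may write $D\equiv \frac{1}{m}B+E_1$ for some positive integer $m$, some ample divisor $B$, and some effective $\mathbb{Q}$-divisor $E_1$, so $\widehat{\iota}(D)\ge 1/m>0$; for the upper bound, if $D\equiv tA+E$ with $A$ an ample divisor and $E$ pseudo-effective, intersecting with $H^{n-1}$ and using $E\cdot H^{n-1}\ge 0$ together with $A\cdot H^{n-1}\ge 1$ (the integrality of $A$ is essential here) gives $t\le D\cdot H^{n-1}$. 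In particular $\widehat{\iota}(D)<\infty$, and moreover $A\cdot H^{n-1}\le (D\cdot H^{n-1})/t$ whenever $t>0$.

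Next, for an ample divisor $A$ I would set $t(A):=\max\{t\in\mathbb{R}\mid D-tA \text{ is pseudo-effective}\}$; the maximum is attained because the pseudo-effective cone is closed, and $t(A)$ depends only on the numerical class of $A$. By the previous paragraph, $\widehat{\iota}(D)=\sup_{A}t(A)$. Put $t_0:=\widehat{\iota}(D)>0$ and $c:=2(D\cdot H^{n-1})/t_0$; then every ample divisor $A$ with $t(A)>t_0/2$ satisfies $A\cdot H^{n-1}\le c$. Now the set of nef classes $a\in\NS(X)_{\mathbb{R}}$ with $a\cdot H^{n-1}\le c$ is compact, because the linear functional $a\mapsto a\cdot H^{n-1}$ is strictly positive on $\Nef(X)\setminus\{0\}$ and a strictly positive linear functional on a closed convex cone in a finite-dimensional vector space has compact sublevel sets. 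Intersecting this compact set with the N\'eron--Severi lattice, there are only finitely many numerical classes $a_1,\dots,a_N$ of ample divisors $A$ with $A\cdot H^{n-1}\le c$.

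Finally I would conclude: the class of every ample divisor $A$ with $t(A)>t_0/2$ is one of $a_1,\dots,a_N$, hence
\[
t_0 \;=\; \sup_{A}t(A) \;=\; \sup_{A\,:\,t(A)>t_0/2}t(A) \;\le\; \max_{1\le i\le N}t(a_i) \;\le\; t_0 ,
\]
where the last inequality holds since $t(a_i)\le\widehat{\iota}(D)=t_0$ for each $i$. Thus $t_0=t(A_0)$ for an ample divisor $A_0$ whose class attains the maximum, and $E_0:=D-\widehat{\iota}(D)A_0$ is pseudo-effective by the very definition of $t(A_0)=t_0$; therefore $D\equiv\widehat{\iota}(D)A_0+E_0$, as desired.

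The only slightly delicate ingredient is the compactness assertion, which rests on the standard fact that a nef divisor $N$ on $X$ with $N\cdot H^{n-1}=0$ is numerically trivial; I would simply invoke this. Everything else is bookkeeping with the definition of the supremum and the closedness of the pseudo-effective cone. I would also emphasise that the integrality of $A$ is indispensable: it enters through $A\cdot H^{n-1}\ge 1$ and through the discreteness of the N\'eron--Severi lattice, and without it one could replace $(t,A,E)$ by $(2t,\frac{1}{2}A,E)$, forcing the supremum to be infinite.
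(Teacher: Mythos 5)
Your proof is correct and follows essentially the same strategy as the paper's: reduce to finitely many candidate integral ample classes by confining all relevant classes to a compact subset of $\N^1(X)_{\mathbb{R}}$ and invoking the discreteness of the N\'eron--Severi lattice, so that the supremum becomes a maximum over a finite set. The only (immaterial) difference is that the paper gets compactness from the box $0\le B\cdot C_i\le \frac{1}{t_0}D\cdot C_i$ for a basis $[C_1],\dots,[C_m]$ of $\N_1(X)_{\mathbb{R}}$ consisting of movable curve classes, whereas you use the single functional $a\mapsto a\cdot H^{n-1}$ together with the standard fact that it is strictly positive on $\Nef(X)\setminus\{0\}$; you also spell out $0<\widehat{\iota}(D)<\infty$, which the paper leaves implicit.
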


\begin{proof}
Set $t_0=\frac{\widehat{\iota}(D)}{2}<\infty$.
Pick a real number $t\ge t_0$, an ample divisor $A_t$ and a pseudo-effective $\mathbb{R}$-divisor $E_t$  on $X$ such that $D \equiv tA_t+E_t$. 
Then $\frac{1}{t_0}D-A_t=(\frac{t}{t_0}-1)A_t+\frac{1}{t_0}E_t$  is pseudo-effective.
In order to prove the lemma, it is enough to show that 
there are finitely many  classes of integral effective divisors $B$ on $X$ such that $\frac{1}{t_0}D-B$  is pseudo-effective.
Let $C_1,\ldots,C_m$ be movable curves on $X$ such that $[C_1],\ldots,[C_m]$ is a basis of $\N_1(X)_\mathbb{R}$. 
If  $B$ is an effective divisor on $X$ such that $\frac{1}{t_0}D-B$ is pseudo-effective, then we have
$0 \le B \cdot C_i \le \frac{1}{t_0}D\cdot C_i$. 
These inequalities define a compact set $\Delta \subset \N^1(X)_\mathbb{R}$. 
Since the set of classes of effective divisors is discrete in $\N^1(X)_\mathbb{R}$, 
the compact set $\Delta$ contains at most finitely many of these classes.
\end{proof}

Next we show that the algebraic rank of a foliation is bounded from below by its generalised index, and classify
foliations attaining this bound (Theorem~\ref{thm:ko}). 
We fix the notation to be used in what follows.

\begin{notation}\label{notation:sect4}
Let $\sF\subseteq T_X$ be a foliation  with big anti-canonical class and generalised index $\widehat{\iota}$.
Denote by $\sF^a$ the algebraic part of $\sF$, and by $r^a$ its algebraic rank.
By  Lemma~\ref{lemma:generalized_index}, there is an ample divisor $L$ and a pseudo-effective $\mathbb{R}$-divisor $E$ 
such that $-K_\sF\equiv \widehat{\iota} L + E$.
Set $\sL:=\sO_X(L)$, and $\alpha_0=[\sL^{n-1}] \in\Mov(X)$.
For any movable curve class $\alpha \in\Mov(X)$, we denote by $\sF_\alpha^+$
the positive part of $\sF$ with respect to $\alpha$, as defined in Paragraph~\ref{HNF}, 
and by $r_\alpha=\rank (\sF_{\alpha}^+)$ its rank. 
\end{notation}

\begin{proof}[Proof of Theorem~\ref{thm:ko}]
We follow Notation~\ref{notation:sect4}, and assume that $\sF\neq T_X$.

For any movable curve class $\alpha \in\Mov(X)$ such that $\mu_{\alpha}(\sF_{\alpha}^+\otimes\sL^*)\le 0$, we have 
\begin{equation}\label{rankF+}
\begin{aligned}
\widehat{\iota}\mu_{\alpha}(\sL) \ \ &  \le \ \ \det(\sF)\cdot\alpha & \\ 
& \le \ \  \det(\sF_{\alpha}^+)\cdot\alpha & (\text{ since } \mu_{\alpha}(\sF/\sF_{\alpha}^+) \le 0)\  \\ 
& \le \ \  r_\alpha  \mu_{\alpha}(\sL) & (\text{ since } \mu_{\alpha}(\sF_{\alpha}^+\otimes\sL^*)\le 0)\  \\ 
& \le \ \  r^a \mu_{\alpha}(\sL) & (\text{ since } \sF_{\alpha}^+\subset \sF^a \text{ by Theorem \ref{algebraic_integrability_criterion}}). 
\end{aligned} 
\end{equation}
By Theorem \ref{thm:hoering}, we have  $\mu_{\alpha_0}(\sF_{\alpha_0}^+\otimes\sL^*)\le 0$  for the ample class  $\alpha_0$, and 
\eqref{rankF+}  gives that  
$\widehat{\iota} \le r^a$.

\medskip

Suppose from now on that $\widehat{\iota} = r^a$. 
Then \eqref{rankF+} shows that, for any movable curve class $\alpha \in\Mov(X)$ such that $\mu_{\alpha}(\sF_{\alpha}^+\otimes\sL^*)\le 0$
(in particular for $\alpha=\alpha_0$), we have $\sF^a=\sF_{\alpha}^+$, and $\mu_{\alpha}(\sF^a\otimes\sL^*) =  0$. Moreover $E \cdot \alpha_0=0$,
and thus $E=0$ and $-K_\sF\equiv \widehat{\iota} L$. 

Next we show that $\mu_{\alpha}(\sF^a\otimes\sL^*) =  0$  for every $\alpha\in\Mov(X)$. 
Suppose that this is not the case. Since the ample class  $\alpha_0$ lies in the interior of $\Mov(X)$ and $\mu_{\alpha_0}(\sF^a\otimes\sL^*) =  0$,
there must exist a class $\alpha\in\Mov(X)$ such that  $\mu_{\alpha}(\sF^a\otimes\sL^*)> 0$.
By Theorem \ref{thm:positive_twisted_slope},
there is a dense open  subset $X^\circ \subset X$ and a 
$\mathbb{P}^{r^a}$-bundle structure $\pi^\circ\colon X^\circ \to Y^\circ$ such that 
$\sF^a_{|X^\circ}=T_{X^\circ/Y^\circ}$, and  $\sL$ restricts to $\sO_{\mathbb{P}^{r^a}}(1)$ on any fiber of $\pi^\circ$. 
By Remark~\ref{rem_pullback}, there is a foliation $\sG$ on $Y^\circ$ such that 
$\sF_{|X^\circ}=({\pi^\circ})^{-1}\sG$. 
By shrinking $Y^\circ$ if necessary, we may assume that $\sG$ is locally free,
and  $\sF/T_{X^\circ/Y^\circ} \cong {\phi^*\sG}$.
Therefore, for a general fiber $F$ of $\pi^\circ$, we have 
$$
\sO_{\mathbb{P}^{r_a}}(\widehat{\iota})\cong \det(\sF)_{|F} \cong \det(T_{X^\circ/Y^\circ})_{|F} \cong 
\sO_{\mathbb{P}^{r_a}}(r^a+1),
$$
contradicting our assumption that  $\widehat{\iota} = r^a$. 
We conclude that $\mu_{\alpha}(\sF^a\otimes\sL^*) =  0$  for every $\alpha\in\Mov(X)$.

This implies that $-K_{\sF^a}\equiv r^a L$, and hence $\sF^a$ is an algebraically integrable Fano foliation. 
Let $(F,\Delta_F)$ be a general log leaf of $\sF^a$, so that
$-(K_{ F}  +  \Delta_F )\equiv r^a L_{|F}$.
By Corollary~\ref{delta_neq_0}, $\Delta_F \neq 0$, and thus $K_{ F}+r^a L_{|F} \equiv -\Delta_F$ is not pseudo-effective. 
By \cite[Lemma 2.5]{hoering_fol},
$\big(F,\sL,\sO_F(\Delta_F)\big)\cong\big(\mathbb{P}^{r^a},\sO_{\mathbb{P}^{r^a}}(1),\sO_{\mathbb{P}^{r^a}}(1)\big)$.
In particular, $(F,\Delta_F)$ is log canonical. By \cite[Proposition 5.3]{fano_fols}, there is a common point $x$ in the closure of a general leaf of $\sF^a$. 
This implies that $X$ is covered by rational curves passing through $x$ having degree $1$ with respect to $\sL$. 
It follows that $X$ is a Fano manifold with Picard number $\rho(X)=1$, and hence $-K_{\sF^a}\equiv r^a L$ implies that 
$-K_{\sF^a}\sim r^a L$.
By \cite[Theorem 1.1]{adk08}, $X \cong \mathbb{P}^n$ and $\sF^a$ is induced by a linear
projection $\pi\colon \mathbb{P}^n \dashrightarrow \mathbb{P}^{n-r_a}$. 
By Remark~\ref{rem_pullback}, there is a foliation $\sG$ on $\mathbb{P}^{n-r_a}$ such that $\sF=\pi^{-1}\sG$. 
By \eqref{pullback_fol} and \eqref{morphism_fol}, outside the center of the linear projection $\pi\colon \mathbb{P}^n \dashrightarrow \mathbb{P}^{n-r_a}$, 
we have $\pi^*K_\sG\sim K_\sF-K_{\sF^a}\sim 0$, and thus $K_\sG\sim 0$.
\end{proof}

Finally, we describe foliations $\sF$  whose algebraic rank slightly exceeds the generalised index, namely
$ \widehat{\iota}(\sF) < r^a(\sF)\le\widehat{\iota}(\sF) +1$.
We start by addressing foliations $\sF$ with $K_{\sF}\equiv 0$ and  algebraic rank $r^a=1$. 

\begin{lemma}\label{lemma:alg_rank_one}
Let $X$ be a complex projective manifold with Picard number $\rho(X)=1$, and
let $\sF$ be a foliation on $X$. Suppose that $K_\sF\equiv 0$ and that $\sF$ has algebraic rank $1$.
Then the closure of a general leaf of the algebraic part of $\sF$ is a  rational curve.
\end{lemma}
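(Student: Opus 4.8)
The plan is to pass to the algebraic part $\sF^a\subseteq\sF$, which has rank $1$, and to analyze the closure $C$ of a general leaf of $\sF^a$, with the goal of showing $C\cong\mathbb{P}^1$. The key point is to relate the canonical class of $\sF^a$ to that of $\sF$ and then to the geometry of $C$. Writing $\sG$ for the transcendental part of $\sF$, living on a normal variety $Y$ with $\varphi\colon X\map Y$, and equipping $\varphi$ with an equidimensional model as in the family-of-leaves setup, the formula \eqref{morphism_fol} gives $K_{\sF^a}=K_{X/Y}-R(\varphi)$ on the relevant models. Using $K_\sF\equiv 0$ together with \eqref{pullback_fol} relating $K_\sF$, $K_{\sF^a}$ and $\varphi^*K_\sG$, and the fact that $\sG$ is purely transcendental so $K_\sG$ is pseudo-effective by Theorem~\ref{algebraic_integrability_criterion}, I would deduce that $K_{\sF^a}$ is \emph{anti}-pseudo-effective in a suitable sense, i.e. $-K_{\sF^a}$ is pseudo-effective; in fact since $\rho(X)=1$ this forces $-K_{\sF^a}$ to be nef, so $\sF^a$ is a (possibly trivial-canonical) Fano foliation by curves, unless $K_{\sF^a}\equiv 0$.

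Next I would invoke the description of the general log leaf. Let $(F,\Delta_F)$ be the general log leaf of $\sF^a$, with $F$ the normalization of $C$, so $K_F+\Delta_F\equiv i^*K_{\sF^a}$ and $F$ is a smooth projective curve. If $-K_{\sF^a}$ is ample (the Fano case), then $\deg(K_F+\Delta_F)<0$; since $\Delta_F\ge 0$ this gives $\deg K_F<0$, hence $F\cong\mathbb{P}^1$ and we are done. If instead $K_{\sF^a}\equiv 0$, then $\deg(K_F+\Delta_F)=0$, so either $F\cong\mathbb{P}^1$ (with $\Delta_F$ of degree $2$), or $F$ is elliptic with $\Delta_F=0$. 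In the latter case $\Delta_F=0$ forces, via Lemma~\ref{lemma:dicritical} (applied to the rank-$1$ algebraically integrable foliation $\sF^a$), that no component of $\Exc(\nu)$ dominates $Y$, so $\sF^a$ is induced by a rational map that is a smooth proper morphism over a dense open set — which is impossible on a variety with $\rho(X)=1$ by the argument in Corollary~\ref{delta_neq_0}. Hence $F\cong\mathbb{P}^1$, i.e. the closure of a general leaf of $\sF^a$ is a rational curve.

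The main obstacle I anticipate is the bookkeeping in the first step: making the passage between $X$, $Y$, and their equidimensional/smooth models precise enough that the divisorial identities $K_\sF=\varphi^*K_\sG+K_{X/Y}-\sum(\varphi^*B_i-(\varphi^*B_i)_{\mathrm{red}})$ and $K_{\sF^a}=K_{X/Y}-R(\varphi)$ can be subtracted cleanly, and then pushed down to numerical statements on $X$ using $\rho(X)=1$. One must check that the pseudo-effectivity of $K_\sG$ pulls back and that the correction terms $\sum(\varphi^*B_i-(\varphi^*B_i)_{\mathrm{red}})$ and $R(\varphi)$ have the right sign so as not to spoil the conclusion $-K_{\sF^a}$ pseudo-effective; concretely, $R(\varphi)\ge 0$ and the $\sG$-invariant correction terms are effective, both of which help rather than hurt. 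Once the sign is pinned down, the rest is the standard adjunction-on-the-leaf argument together with the two cited rigidity statements (Lemma~\ref{lemma:dicritical} and the $\rho(X)=1$ obstruction from Corollary~\ref{delta_neq_0}).
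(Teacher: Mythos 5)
Your proposal follows essentially the same route as the paper: decompose $\sF$ into its algebraic part and transcendental part $\sG$, use pseudo-effectivity of $K_\sG$ (Theorem~\ref{algebraic_integrability_criterion}) together with the fact that $R(\varphi)-\sum_i\bigl(\varphi^*B_i-(\varphi^*B_i)_{\textup{red}}\bigr)$ is effective (the invariant sum being a subsum of $R(\varphi)$ — this is the precise form of your sign check) to get $-K_{\sF^a}$ pseudo-effective, then split into the cases $K_{\sF^a}\not\equiv 0$ and $K_{\sF^a}\equiv 0$ and conclude via the log-leaf adjunction and Corollary~\ref{delta_neq_0}. The only cosmetic difference is that in the first case the paper invokes Theorem~\ref{algebraic_integrability_criterion} directly rather than the degree computation on the log leaf; both work.
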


\begin{proof}
There exist a normal projective variety $Y$, a dominant rational map $\pi\colon X \dashrightarrow Y$
of relative dimension $1$, and a purely transcendental
foliation $\sG$ on $Y$ such that $\sF=\pi^{-1}\sG$. 
Denote by $\sC$ the foliation induced by $\pi$, i.e., the algebraic part of $\sF$.

After replacing $Y$ with a birationally equivalent variety, we may assume that $Y$ is the family of leaves of $\sC$. 
Let $X^\circ \subset X$ be an open subset with complement of codimension at least $2$ such that $\pi$ restricts to an 
equidimensional morphism
$\pi_{|X^\circ}\colon X^\circ \to Y$.
By \eqref{pullback_fol} and \eqref{morphism_fol} applied to $\pi_{|X^\circ}$, there is an effective divisor $R$ on $X$ such that, on $X^\circ$, we have
$$
- K_\sC \equiv \pi^*K_\sG + R.
$$
By Theorem \ref{algebraic_integrability_criterion}, $K_\sG$ is pseudo-effective,  and hence
so is $-K_\sC$. 
If $K_\sC\not\equiv 0$, then the lemma follows from Theorem~\ref{algebraic_integrability_criterion}.
Suppose from now on that $K_\sC\equiv 0$, and let 
$(F,\Delta_F)$ be a general log leaf of $\sC$, so that $K_F+\Delta_F \equiv 0$. 
By Corollary~\ref{delta_neq_0}, $\Delta_F\neq 0$.
Thus $\deg(K_F)<0$, and hence $F\cong\mathbb{P}^1$.
\end{proof}

\begin{proof}[Proof of Theorem~\ref{thm:rat_connectedness_leaves}]
We follow Notation~\ref{notation:sect4}, and assume that $ \widehat{\iota} < r^a\le\widehat{\iota} +1$.
We will show that the closure of a general leaf of $\sF^a$ is rationally connected.

\medskip

\noindent\textbf{Step 1}. 
Let $\alpha \in\Mov(X)$ be a movable curve class. 
If $\mu_{\alpha}(\sF_{\alpha}^+\otimes\sL^*) < 0$, then \eqref{rankF+} gives that 
$r^a-1\le\widehat{\iota} < r_\alpha \le r^a$. Hence, $\sF_{\alpha}^+=\sF^a$. 
By Theorem \ref{algebraic_integrability_criterion}, the closure of a general leaf of $\sF^a$ is rationally connected. 
If $\mu_{\alpha}(\sF_{\alpha}^+\otimes\sL^*) = 0$, then \eqref{rankF+} gives that  
$r^a-1\le\widehat{\iota} \le r_\alpha \le r^a$, and 
hence $\sF_{\alpha}^+$ has codimension at most $1$ in $\sF^a$. 
If  $\sF_{\alpha}^+=\sF^a$, then, as before, the closure of a general leaf of $\sF^a$ is rationally connected. 
Suppose that $\sF_{\alpha}^+$ has codimension $1$ in $\sF^a$.
Then $\widehat{\iota} = r^a-1$.
 Let $\beta \in\Mov(X)$ be another movable curve class. 
If $\sF_\beta^+ \not\subset \sF_\alpha^+$, then $\sF_{\alpha}^+ +\sF_{\beta}^+ = \sF^a$ and 
Theorem \ref{algebraic_integrability_criterion} applied to both $\alpha$ and $\beta$ gives that 
the closure of a general leaf of $\sF^a$ is rationally connected.

Therefore, from now on we may assume the following.

\medskip

\noindent\textit{Additional assumption:} For any $\alpha\in\Mov(X)$,  $\mu_{\alpha}(\sF_{\alpha}^+\otimes\sL^*) \ge 0$.
If $\alpha\in\Mov(X)$ is such that $\mu_{\alpha}(\sF_{\alpha}^+\otimes\sL^*) = 0$, 
then $\sF_{\alpha}^+$ has codimension $1$ in $\sF^a$, and $\sF_\beta^+ \subset \sF_\alpha^+$
for any $\beta\in \Mov(X)$.

\medskip

By Theorem \ref{thm:hoering}, we have  $\mu_{\alpha_0}(\sF_{\alpha_0}^+\otimes\sL^*)\le 0$  for the ample class  $\alpha_0$.
The additional assumption and  \eqref{rankF+}  for $\alpha=\alpha_0$ give that $r_{\alpha_0}=r^a-1=\widehat{\iota}$, 
$E=0$  and $-K_\sF\equiv \widehat{\iota} L$.

\medskip

\noindent\textbf{Step 2}. 
Suppose that $\mu_{\alpha}(\sF_{\alpha}^+\otimes\sL^*) = 0$ for every
$\alpha$ in a nonempty open subset $U$ of $\Mov(X)$. 
From the additional assumption, it follows that $\sF_\alpha^+=\sF_{\alpha_0}^+$ and $\mu_{\alpha}(\sF_{\alpha_0}^+\otimes\sL^*) = 0$
for every $\alpha\in U$.
Thus $-K_{\sF_{\alpha_0}}\equiv r_{\alpha_0} L$.
By Theorem \ref{thm:ko}, $X \cong \mathbb{P}^n$, and $\sF_{\alpha_0}^+$ is induced by 
a linear projection $\pi\colon \mathbb{P}^n \dashrightarrow \mathbb{P}^{n-r_{\alpha_0}}$.
Since $X \cong \mathbb{P}^n$, we must have $-K_\sF\sim \widehat{\iota} L\sim -K_{\sF_{\alpha_0}}$. 
By Remark~\ref{rem_pullback}, there is a foliation $\sG$ on  $\mathbb{P}^{n-r_{\alpha_0}}$ such that 
$\sF = \pi^{-1}\sG$, and  $K_\sG \sim 0$. 
Moreover, since $\sF_{\alpha_0}^+$ has codimension $1$ in $\sF^a$, $\sG$ has algebraic rank $1$. 
By Lemma~\ref{lemma:alg_rank_one}, the closure of a general leaf of the algebraic part of $\sG$ is a rational curve,
and thus the closure of a general leaf of $\sF^a$ is rationally connected.

\

\noindent\textbf{Step 3}. 
By Step 2,
we may assume that $\mu_{\alpha}(\sF_{\alpha}^+\otimes\sL^*) > 0$ for some $\alpha\in \Mov(X)$.
By Theorem \ref{thm:positive_twisted_slope},
$\sF_\alpha^+$ is induced by a generic $\mathbb{P}^{r_\alpha}$-bundle structure 
$\pi_\alpha\colon X \dashrightarrow Y_\alpha$
on $X$,
and $\sL$ restricts to $\sO_{\mathbb{P}^{r_\alpha}}(1)$ on general fibers of $\pi_\alpha$.
If necessary, we can replace $\alpha$ by the class of a line on a fiber of $\pi_\alpha$.
As in \eqref{rankF+}, we have
$$
\begin{aligned}
(r^a-1)\  \ \ = \ \ \widehat{\iota}\  \ \ &  = \ \ \det(\sF)\cdot\alpha \\
& \le \ \  \det(\sF_{\alpha}^+)\cdot\alpha  \\ 
& = \ \ (r_{\alpha}+1)\  , 
\end{aligned} 
$$
and thus $r_{\alpha}\ge r^a- 2$. 
Moreover, by Corollary \ref{cor:degree 0}, 
$-K_{\sF_\alpha^+}-r_\alpha L$ is not pseudo-effective.
Thus, there exists a nonempty open subset $V$ of $\Mov(X)$ such that, 
for every $\beta\in V$, 
$\mu_{\beta}(\sF_{\alpha}^+\otimes\sL^*) < 0$. The additional assumption implies in particular that 
$\sF_\beta^+\neq \sF_\alpha^+$ for every $\beta\in V$.

By Step 2,
we may assume that $\mu_{\beta}(\sF_{\beta}^+\otimes\sL^*) > 0$ for some $\beta\in V$.
As before, 
$\sF_\beta^+$ is induced by a generic $\mathbb{P}^{r_\beta}$-bundle structure 
$\pi_\beta\colon X \dashrightarrow Y_\beta$ on $X$,
$\sL$ restricts to $\sO_{\mathbb{P}^{r_\beta}}(1)$ on general fibers of $\pi_\beta$, and 
$r_\beta\ge r^a-2$.
Notice that
$\sF_\alpha^+ \cap \sF_\beta^+ = 0$ and $\sF_{\alpha}^+ +\sF_{\beta}^+ \subset \sF^a$.
If $\sF_{\alpha}^+ +\sF_{\beta}^+ = \sF^a$, then we conclude as before that the closure of a general leaf of $\sF^a$ is rationally connected.
So we may assume that $\sF_{\alpha}^+ +\sF_{\beta}^+ \subsetneq \sF^a$.
This can only happen if $(r_\alpha,r_\beta,r^a)=(1,1,3)$.

Let $H_\alpha$ and $H_\beta$ be the dominating unsplit families of rational curves on $X$ whose general members correspond to
lines on fibers of $\pi_\alpha$ and $\pi_\beta$, respectively. 
Denote by $\pi^\circ\colon X^\circ \to T^\circ$ the $(H_\alpha,H_\beta)$-rationally connected quotient of $X$
(see for instance \cite[6.4]{fano_fols} for this notion).
By \cite[Lemma 2.2]{adk08}, we may assume that 
$\codim \, X \setminus X^\circ \ge 2$, $T^\circ$ is smooth, and $\pi^\circ$ has irreducible and reduced fibers. 
Applying \cite[Corollaires 14.4.4 et 15.2.3]{ega28}, we see that $\pi^\circ$ is flat. This in turn implies that its fibers are Cohen-Macaulay since both $X$ and $T^\circ$ are smooth.
By \cite[Lemma 6.9]{fano_fols}, there is an inclusion $T_{X^\circ/T^\circ}\subset \sF^a_{|X^\circ}$.
If $T_{X^\circ/T^\circ} = \sF^a_{|X^\circ}$, then the closure of a general leaf of $\sF^a$ is rationally connected. 
So we may assume that $\dim T^\circ = \dim X^\circ  -2$. 
By Remark~\ref{rem_pullback}, there is a foliation
$\sG^\circ$ on $T^\circ$ such that $\sF_{|X^\circ}={\pi^\circ} ^{-1}\sG^\circ$.
By \eqref{pullback_fol}, we have
$$
-K_{X^\circ/T^\circ}= -{K_\sF}_{|X^\circ} + {\pi^\circ}^*K_{\sG^\circ}.
$$
Since $\codim \, X \setminus X^\circ \ge 2$, a general complete intersection curve in $X$ is contained in $X \setminus X^\circ$.
Let $C \to T^\circ$ be the normalization of a complete curve passing through a general point, denote by $X_C$ the fiber product 
$C \times_{T^\circ} X^\circ$, and by $\pi_C\colon X_C \to C$ the natural morphism.
By \cite[Corollaire 5.12.4]{ega24}, $X_C$ satisfies Serre's condition $S_2$. On the other hand, $X_C$ is smooth in codimension $1$ since the fibers of $\pi_C$ are reduced.
It follows that $X_C$ is a normal variety by Serre's criterion for normality. We have
$$
-K_{X_C/C}= {-K_\sF}_{|X_C} + {\pi_C}^*{K_{\sG^\circ}}_{|C}.
$$
If  $K_{\sG^\circ}\cdot C \ge 0$, then $-K_{X_C/C}$ is ample, contradicting
\cite[Theorem 3.1]{adk08}. 
Hence, $K_{\sG^\circ}\cdot C < 0$, and  \cite[Proposition 7.5]{fano_fols}
implies that the closure of a general leaf of $\sF$ is rationally connected.
\end{proof}

\begin{ques}
Is there a foliation $\sF$ with $ \widehat{\iota}(\sF)\not\in \mathbb{N}$ and
$ \widehat{\iota}(\sF) < r^a(\sF) < \widehat{\iota}(\sF) +1$?
From the proof of  Theorem~\ref{thm:rat_connectedness_leaves}, we see that in this case we must have 
$\sF_{\alpha}^+=\sF^a$ for every $\alpha \in\Mov(X)$ such that $\mu_{\alpha}(\sF_{\alpha}^+\otimes\sL^*) \le 0$.
\end{ques}


\begin{thebibliography}{BDPP13}

\bibitem[AD13]{fano_fols}
Carolina Araujo and St{\'e}phane Druel, \emph{On {F}ano foliations}, Adv. Math.
  \textbf{238} (2013), 70--118.

\bibitem[AD14]{codim_1_del_pezzo_fols}
\bysame, \emph{On codimension 1 del {P}ezzo foliations on varieties with mild
  singularities}, Math. Ann. \textbf{360} (2014), no.~3-4, 769--798.

\bibitem[AD16]{fano_fols_2}
\bysame, \emph{{On Fano foliations 2.}}, {Foliation theory in algebraic
  geometry. Proceedings of the conference, New York, NY, USA, September 3--7,
  2013}, Cham: Springer, 2016, pp.~1--20.

\bibitem[AD17]{codim_1_mukai_fols}
\bysame, \emph{Codimension 1 {M}ukai foliations on complex projective
  manifolds}, J. Reine Angew. Math. \textbf{727} (2017), 191--246.

\bibitem[ADK08]{adk08}
Carolina Araujo, St{\'e}phane Druel, and S{\'a}ndor~J. Kov{\'a}cs,
  \emph{Cohomological characterizations of projective spaces and
  hyperquadrics}, Invent. Math. \textbf{174} (2008), no.~2, 233--253.

\bibitem[BDPP13]{bdpp}
S{\'e}bastien Boucksom, Jean-Pierre Demailly, Mihai P{\u{a}}un, and Thomas
  Peternell, \emph{The pseudo-effective cone of a compact {K}\"ahler manifold
  and varieties of negative {K}odaira dimension}, J. Algebraic Geom.
  \textbf{22} (2013), no.~2, 201--248.

\bibitem[BM16]{bogomolov_mcquillan01}
Fedor Bogomolov and Michael McQuillan, \emph{Rational curves on foliated
  varieties}, Foliation theory in algebraic geometry, Simons Symp., Springer,
  Cham, 2016, pp.~21--51.

\bibitem[Bos01]{bost}
Jean-Beno{\^{\i}}t Bost, \emph{Algebraic leaves of algebraic foliations over
  number fields}, Publ. Math. Inst. Hautes \'Etudes Sci. (2001), no.~93,
  161--221.

\bibitem[CLN96]{CLN}
D.~Cerveau and A.~Lins~Neto, \emph{Irreducible components of the space of
  holomorphic foliations of degree two in {$\bold C{\rm P}(n)$}, {$n\geq 3$}},
  Ann. of Math. (2) \textbf{143} (1996), no.~3, 577--612.

\bibitem[CP11]{campana_peternell11}
Fr{\'e}d{\'e}ric Campana and Thomas Peternell, \emph{Geometric stability of the
  cotangent bundle and the universal cover of a projective manifold}, Bull.
  Soc. Math. France \textbf{139} (2011), no.~1, 41--74, With an appendix by
  Matei Toma.

\bibitem[CP15a]{campana_paun15}
Fr{\'e}d{\'e}ric Campana and Mihai P\u{a}un, \emph{Foliations with positive
  slopes and birational stability of orbifold cotangent bundles}, Preprint {\tt
  arXiv:1508:0245v4}, 2015.

\bibitem[CP15b]{campana_paun}
Fr\'ed\'eric Campana and Mihai P\u{a}un, \emph{Orbifold generic
  semi-positivity: an application to families of canonically polarized
  manifolds}, Ann. Inst. Fourier (Grenoble) \textbf{65} (2015), no.~2,
  835--861.

\bibitem[Dru04]{druel04}
St{\'e}phane Druel, \emph{Caract\'erisation de l'espace projectif}, Manuscripta
  Math. \textbf{115} (2004), no.~1, 19--30.

\bibitem[Dru17a]{bobo}
\bysame, \emph{A decomposition theorem for singular spaces with trivial
  canonical class of dimension at most five}, Invent. Math., to appear. DOI :
  10.1007/s00222-017-0748-y, 2017.

\bibitem[Dru17b]{druel15}
St\'ephane Druel, \emph{On foliations with nef anti-canonical bundle}, Trans.
  Amer. Math. Soc. \textbf{369} (2017), no.~11, 7765--7787.

\bibitem[Fuj92]{fujiwara}
Tsuyoshi Fujiwara, \emph{Varieties of small {K}odaira dimension whose cotangent
  bundles are semiample}, Compositio Math. \textbf{84} (1992), no.~1, 43--52.

\bibitem[GKP16]{gkp_movable}
Daniel Greb, Stefan Kebekus, and Thomas Peternell, \emph{Movable curves and
  semistable sheaves}, Internat. Math. Res. Notices \textbf{2016} (2016),
  no.~2, 536--570.

\bibitem[Gro65]{ega24}
A.~Grothendieck, \emph{\'{E}l\'ements de g\'eom\'etrie alg\'ebrique. {IV}.
  \'{E}tude locale des sch\'emas et des morphismes de sch\'emas. {II}}, Inst.
  Hautes \'Etudes Sci. Publ. Math. (1965), no.~24, 231.

\bibitem[Gro66]{ega28}
\bysame, \emph{\'{E}l\'ements de g\'eom\'etrie alg\'ebrique. {IV}. \'{E}tude
  locale des sch\'emas et des morphismes de sch\'emas. {III}}, Inst. Hautes
  \'Etudes Sci. Publ. Math. (1966), no.~28, 255.

\bibitem[Har80]{hartshorne80}
Robin Hartshorne, \emph{Stable reflexive sheaves}, Math. Ann. \textbf{254}
  (1980), no.~2, 121--176.

\bibitem[H{\"o}r12]{hoering}
Andreas H{\"o}ring, \emph{On a conjecture of {B}eltrametti and {S}ommese}, J.
  Algebraic Geom. \textbf{21} (2012), no.~4, 721--751.

\bibitem[H{\"o}r14]{hoering_fol}
\bysame, \emph{Twisted cotangent sheaves and a {K}obayashi-{O}chiai theorem for
  foliations}, Ann. Inst. Fourier (Grenoble) \textbf{64} (2014), no.~6,
  2465--2480.

\bibitem[KO73]{kobayashi_ochiai}
S.~Kobayashi and T.~Ochiai, \emph{Characterizations of complex projective
  spaces and hyperquadrics}, J. Math. Kyoto Univ. \textbf{13} (1973), 31--47.

\bibitem[LPT11]{loray_pereira_touzet}
Frank Loray, Jorge~Vit\'orio Pereira, and Fr\'ed\'eric Touzet, \emph{Singular
  foliations with trivial canonical class}, Preprint {\tt arXiv:1107.1538v5},
  2011.

\bibitem[LPT13]{lpt3fold}
Frank Loray, Jorge~Vit{\'o}rio Pereira, and Fr{\'e}d{\'e}ric Touzet,
  \emph{Foliations with trivial canonical bundle on {F}ano 3-folds}, Math.
  Nachr. \textbf{286} (2013), no.~8-9, 921--940.

\bibitem[Mor79]{mori79}
S.~Mori, \emph{Projective manifolds with ample tangent bundles}, Ann. of Math.
  (2) \textbf{110} (1979), no.~3, 593--606.

\end{thebibliography}

\providecommand{\bysame}{\leavevmode\hbox to3em{\hrulefill}\thinspace}
\providecommand{\MR}{\relax\ifhmode\unskip\space\fi MR }
\providecommand{\MRhref}[2]{%
  \href{http://www.ams.org/mathscinet-getitem?mr=#1}{#2}
}
\providecommand{\href}[2]{#2}

\end{document}